\theoremstyle{plain}
\newtheorem{theorem}{Theorem}[section]
\newtheorem{lemma}[theorem]{Lemma}
\newtheorem{proposition}[theorem]{Proposition}
\newtheorem{corollary}[theorem]{Corollary}
\theoremstyle{definition}
\newtheorem{definition}[theorem]{Definition}
\newtheorem{remark}[theorem]{Remark}
\newtheorem*{thma}{Theorem A}
\newtheorem*{thmb}{Theorem B}
\numberwithin{equation}{section}
\newcommand{\R}{\mathbb{R}}
\newcommand{\Z}{\mathbb{Z}}
\newcommand{\CC}{\mathbb{C}}
\newcommand{\eps}{\varepsilon}
\renewcommand{\phi}{\varphi}
\newcommand{\caln}{\mathcal{N}}
\newcommand{\calp}{\mathcal{P}}
\newcommand{\ahat}{\hat{a}}
\newcommand{\atil}{\tilde{a}}
\newcommand{\qedwhite}{\hfill \ensuremath{\Box}}
\begin{document}

\title{Expansion properties of Double Standard Maps}
\author{Michael Benedicks}
\address{Matematiska institutionen, KTH, Lindstedtsv\"agen 25, S-100
  44 Stockholm, Sweden}
\email{michaelb@math.kth.se}
\author{Michal Misiurewicz}
\address{Department of Mathematical Sciences, IUPUI, 402 N. Blackford
  Street, Indianapolis, IN 46202, USA}
\email{mmisiure@math.iupui.edu}
\author{Ana Rodrigues}
\address{Department of Mathematics, Exeter University, Exeter EX4 4QF, UK}
\email{A.Rodrigues@exeter.ac.uk}
\thanks{The authors would like to thank the referee for his careful
  reading and several
  valuable suggestions and corrections. The authors would like to thank the
  G\"oran Gustafsson Foundation UU/KTH for financial support. Research
of Micha{\l} Misiurewicz was partially supported by grant number
4266012 from the Simons Foundation and research of Michael Benedicks was
partially supported by the Swedish Research Council, grant number 2016-05482}
\date{\today}

\begin{abstract}
For the family of Double Standard Maps $f_{a,b}=2x+a+\frac{b}{\pi}
\sin2\pi x \quad\pmod{1}$ we investigate the structure of the space of
parameters $a$ when $b=1$ and when $b\in[0,1)$. In the first case
the maps have a critical point, but for a set of parameters $E_1$ of
positive Lebesgue measure there is an invariant absolutely continuous
measure for $f_{a,1}$. In the second case there is an open nonempty
set $E_b$ of parameters for which the map $f_{a,b}$ is expanding. We
show that as $b\nearrow 1$, the set $E_b$ accumulates on many points
of $E_1$ in a regular way from the measure point of view.
\end{abstract}

\maketitle

\section{Introduction}\label{Introduction}

In one-dimensional dynamics, a lot is known about the families of
smooth maps with a critical point, like quadratic maps, and about the
maps that have no critical points (local diffeomorphisms of the
circle). Here we start to investigate what happens at the interface of
those two cases.

Consider the family of \emph{double standard maps} of the circle onto
itself, given by
\begin{equation}\label{DSM}
f_{a,b}=2x+a+\frac{b}{\pi} \sin2\pi x \quad\pmod{1},
\end{equation}
where the parameters $a,b$ are real, $a \in [0,1)$ and $b\in[0,1]$. In
fact, we consider $a$ from the circle $\R/\Z$, but since we are mostly
working locally (and far from $a=0$), considering $a$ real is simpler.
These family of maps were introduced in \cite{MR07}.

For $b=1$, maps of the family \eqref{DSM} have a unique cubic critical
point (at $c=1/2$) and negative Schwarzian derivative. Thus, they
behave similarly to the quadratic maps. In particular, there is a set
of parameters $a$ for which there is an invariant probability measure,
absolutely continuous with respect to the Lebesgue measure. For $b<1$,
there is no critical point, so the the maps are local diffeomorphisms.
Complexification of the maps, obtained by conjugacy via $e^{2\pi ix}$,
gives the family
\[
g_{a,b}(z)= e^{2\pi ia}z^2e^{b\left(z-\frac1z\right)}.
\]
Those maps are symmetric with respect to the unit circle, and factored
by this symmetry, they have only one critical point and no asymptotic values
in $\CC\setminus\{0\}$. Therefore a map $f_{a,b}$ has at most one
attracting or neutral periodic orbit (see~\cite{MR07, D10, FG07}).

One can also look at the family of double standard maps as a hybrid
between the family of \emph{standard maps}, studied by V.~Arnold
(see~\cite{A65}) and important in the creation of the KAM theory, and
\emph{expanding maps} of the circle (see~\cite{SS85}). Of course
instead of maps of degree 2 one can take maps of higher degrees and
the results will be practically the same (but we would introduce one
more parameter and loose a nice name of the family).

Some recent work has been done for classes of families that include
double standard maps. Misiurewicz and Rodrigues studied them
in~\cite{MR07, MR08}. Benedicks and Rodrigues~\cite{BR09} investigated
symbolic dynamics for this family. Universality for critical circle
covers was studied by Levin and \'Swi\c{a}tek,~\cite{LS2002}. Levin
and van Strien~\cite{LevvS2001} proved complex bounds, quasisymmetric
rigidity and density of hyperbolicity for a class of real analytic
maps which includes the double standard maps. Fagella and
Garijo~\cite{FG07} studied a class of complex maps containing the maps
$g_{a,b}$. Dezotti~\cite{D10} also considered maps $g_{a,b}$, and
using complex methods obtained important results on the real case.

As for the Arnold's family, for the double standard family we call the
sets for which there is an attracting periodic orbit of a given type
(period plus combinatorics) \emph{tongues}. Dezotti~\cite{D10} proved
that tongues are connected. The lowest tongue tip is at $b=1/2$, for
the period 1 tongue. If $0<b<1/2$, the map $f_{a,b}$ is expanding. At
higher $b$-levels there may be finitely or infinitely many tongues
(see~\cite{MR07}). In particular, at the critical level $b=1$ all
tongues are present, and it is easy to prove that they are dense at
this level (see~\cite{LevvS2001}). We show (in Theorem~A) that at the
lower levels $f_{a,b}$ can have an attracting or neutral periodic
orbit, and otherwise it is expanding. Moreover, the set of expanding
maps is dense in the complement of the tongues.

For simplicity, we will be using notation $f_a$ for $f_{a,1}$. A
parameter $a_0$ will be called an \emph{MT parameter} if the
trajectory of the critical point $c=1/2$ is preperiodic (but not
periodic).

In this case $f_{a_0}$has an absolutely continuous invariant measure,
\cite{Mis1}, and  it is also true that the critical value
$f_{a_0}(\frac{1}{2})$ satisfies the Collet-Eckmann condition,
i.e. that there is $C_{\text{\rm CE}}>0$ and  $\Cl[Kappa-const]{kappa:ce}>0$ such that for $a=a_0$ 
\begin{equation}\label{ce-est}
(f_{a}^n)'(f_{a}(c)) \geq C_{\text{\rm CE}}   e^{\Cr{kappa:ce} n},   \qquad \forall n\geq 0,
\end{equation}
which implies the existence of an absolutely continuous invariant
measure, \cite{NvS,vST}.

Using the methods of \cite{BC1} it is possible to prove

\begin{proposition}\label{prop:stretched:exp}
  There is a set of positive Lebesgue measure $\tilde{E}_1$ so that
  for all $a\in \tilde{E}_1$ there is $n_0(a)$ so that 
  \begin{equation}\label{stretched-ce-est}
(f_{a}^n)'(f_{a}(c)) \geq e^{n^{2/3} },   \qquad \forall n\geq n_0(a),
\end{equation}
Here $\frac{2}{3}$ can be replaced by any constant $\sigma<1$.
\end{proposition}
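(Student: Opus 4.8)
The plan is to carry out a Benedicks--Carleson-type parameter exclusion, localised near an MT parameter. Fix an MT parameter $a_0$. By \eqref{ce-est} the orbit of $c$ under $f_{a_0}$ satisfies the Collet--Eckmann condition; moreover that orbit is preperiodic and lands on a repelling cycle, hence stays a definite distance from $c$, so a Ma\~n\'e-type argument supplies a neighbourhood $W\ni c$ and constants $C_0\in(0,1)$, $\lambda_0>1$ with $|(f_a^{\,k})'(x)|\ge C_0\lambda_0^{\,k}$ whenever the segment $x,f_a(x),\dots,f_a^{\,k-1}(x)$ avoids $W$, uniformly for $a$ in a small window $J\ni a_0$. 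The two structural inputs, both taken from \cite{BC1} and adapted to the circle, are: (i) since $f_a$ has negative Schwarzian derivative, the Koebe principle gives bounded distortion for $f_a^{\,k}$ restricted to an interval whose first $k-1$ iterates miss a fixed neighbourhood of $c$, the distortion being controlled by the total length of those iterates; and (ii) the identity $\partial_a f_a^{\,k}(c)=\sum_{j=0}^{k-1}(f_a^{\,j})'(f_a^{\,k-j}(c))$, whose sum is dominated by its leading term $(f_a^{\,k-1})'(f_a(c))$ along parameter intervals on which the orbit of $c$ has so far behaved well, making the parameter derivative and the phase derivative comparable.

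Fix the target exponent $\sigma<1$ and a companion $\beta\le\sigma$, and call $a\in J$ \emph{admissible up to time $n$} if $|f_a^{\,k}(c)-c|\ge e^{-k^{\beta}}$ for all $k$ with $k_0\le k\le n$. This slow-recurrence requirement is the only one imposed; being far weaker than the bounded recurrence $|f_a^{\,k}(c)-c|\ge e^{-\alpha k}$ that underlies genuine Collet--Eckmann, it is what allows a positive-measure set to survive, at the price of a weaker rate. By strong induction on $n$, interlocked with the exclusion step, admissibility up to $n$ forces $(f_a^{\,k})'(f_a(c))\ge e^{k^{\sigma}}$ for $k_0\le k\le n$: one decomposes $[k_0,k]$ into \emph{free} stretches, on which (i) and the Ma\~n\'e bound contribute a factor $C_0\lambda_0^{\,q}$, and \emph{binding} stretches following each return of the critical orbit into $W$, during which the critical orbit of $f_a$ shadows its own initial segment $f_a(c),f_a^{2}(c),\dots$. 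Because $f_a$ has a cubic critical point at $c$, a return of depth $e^{-r}$ loses a factor $\asymp e^{-2r}$ at the return step, but the binding stretch that follows --- whose length lies between $\asymp r$ and $\asymp r^{1/\sigma}$, according to how fast the shadowed initial orbit happens to expand --- regains a factor $\asymp e^{3r}$, a net gain $\asymp e^{r}$; tallying these over $[k_0,k]$, even the worst admissible configurations (many nearly-deepest returns whose long binding stretches cover almost all of $[k_0,k]$) leave a net logarithmic expansion at least of the order of the accumulated return depth, which is $\gtrsim k^{\sigma}$. It is precisely this possible length $\asymp r^{1/\sigma}$ of binding stretches under mere slow recurrence that caps the rate at the stretched-exponential level rather than yielding genuine Collet--Eckmann.

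Feeding $(f_a^{\,k})'(f_a(c))\ge e^{k^{\sigma}}$ back into (i) and (ii), one sees that inside the set of parameters admissible up to $n-1$ those failing $|f_a^{\,n}(c)-c|\ge e^{-n^{\beta}}$ form a subset of relative measure $\lesssim e^{-n^{\beta}}$: in the $f_a^{\,n}(c)$-coordinate the forbidden set is an arc of length $2e^{-n^{\beta}}$ inside an arc of definite size, pulled back with bounded distortion. Since $\sum_{n\ge k_0}e^{-n^{\beta}}<\infty$, taking $k_0$ large and $J$ correspondingly small --- so that every $a\in J$ is admissible up to time $k_0$, and, using the honest Collet--Eckmann property at $a_0$, the map $a\mapsto f_a^{\,k_0}(c)$ already has macroscopic range on $J$ --- keeps the total excluded measure below $\tfrac12|J|$. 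Hence $\tilde{E}_1:=\bigcap_{n}\{a\in J:\ a\text{ admissible up to }n\}$ has $|\tilde{E}_1|\ge\tfrac12|J|>0$, and for $a\in\tilde{E}_1$ the estimate above is exactly \eqref{stretched-ce-est} with $\sigma$ in place of $2/3$, the constant $n_0(a)$ absorbing the finite warm-up before the bound becomes effective.

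The step I expect to be the main obstacle is keeping this induction genuinely self-consistent. The distortion and transversality controls needed at time $n$ presuppose the growth bound $(f_a^{\,k})'(f_a(c))\ge e^{k^{\sigma}}$ for all $k<n$ over the whole parameter interval in play, while that bound is available only because the parameter has survived the exclusions up to time $n$ --- so the phase-space estimate and the parameter exclusion have to be built as a single interlocked induction, and one must check that the parameter intervals do not fragment uncontrollably and that the functions $a\mapsto f_a^{\,n}(c)$ recover macroscopic range after every deep return (the parameter-side counterpart of the binding argument, where the cubic degeneracy again permits only subexponential control). A related, genuinely delicate point is the bookkeeping for returns occurring inside, or immediately after, an ongoing binding stretch: one passes to outermost binding blocks and must verify that the nested losses still telescope into the stated bound. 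The remaining ingredients --- the Koebe distortion estimates, the Ma\~n\'e expansion near $a_0$, and the summability $\sum_{n}e^{-n^{\beta}}<\infty$ --- are routine once this scaffolding is in place.
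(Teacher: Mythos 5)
Your proposal follows essentially the same route as the paper: a Benedicks--Carleson parameter exclusion near an MT parameter, using the slow-recurrence condition $|\xi_j(a)-c|\ge e^{-j^\beta}$ (the paper takes $\beta=1/2$, $\sigma=2/3$) in place of the usual basic assumption, together with the standard Ma\~{n}\'{e} expansion outside a critical neighbourhood, the bound/free decomposition, transversality via $\partial_a f_a^k(c)=\sum_j (f_a^j)'(f_a^{k-j}(c))$, distortion estimates, and summability of the excluded measure. The paper's own ``proof'' is merely an outline pointing to the Theorem~A induction run with $b=1$ and $\hat{N}=\infty$, and your sketch accurately captures that structure, including the diagnosis of the bound period length $\asymp r^{1/\sigma}$ as the source of the stretched-exponential (rather than exponential) rate; the only small slip is that one needs $\beta<\sigma$ strictly so that bound periods $p\lesssim n^{\beta/\sigma}$ are a vanishing fraction of $n$.
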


A parameter exclusion requiring 

\begin{equation}\label{mod-ba}
\text{dist}(f_{a}^j(c),c)\geq e^{- \sqrt{j}},\quad j \geq 1,
\end{equation}
will be sufficient to prove \eqref{stretched-ce-est} and then also  
Jacobson's theorem follows.

\smallskip
Using the methods of Large deviations of \cite{BC2} it is possible to
prove

\begin{proposition}\label{prop:CE:exp}

  There is a set of positive Lebesgue measure $E_1$ and some $\kappa>0$ so that
  for all $a\in E_1$

  \begin{equation}\label{ce-est-pos}
(f_{a}^n)'(f_{a}(c)) \geq C e^{\kappa n },   \qquad \forall n\geq 0.
\end{equation}
\end{proposition}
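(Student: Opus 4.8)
The plan is to carry out a Benedicks--Carleson parameter induction anchored at a Misiurewicz--Thurston parameter $a_0$, exactly as one proves Proposition~\ref{prop:stretched:exp}, but replacing the crude ``sum of return depths'' bound by the large--deviation bookkeeping of \cite{BC2}; this is what allows one to retain a \emph{uniform exponential} rate $\kappa>0$ rather than the stretched--exponential rate $n^{2/3}$. Throughout I use that for $b=1$ the map $f_a$ has a single, cubic critical point at $c$ and negative Schwarzian, and that $\partial_a f_a\equiv 1$, so that the variational formula $\partial_a f_a^n(c)=\sum_{k=0}^{n-1}(f_a^{n-1-k})'(f_a^{k+1}(c))$ makes the transfer between phase--space and parameter--space derivatives transparent.

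\textbf{Set--up and outside expansion.} Fix an MT parameter $a_0$, so that \eqref{ce-est} holds at $a_0$ with exponent $\Cr{kappa:ce}$, and fix a short parameter interval $\omega_0\ni a_0$. Since $f_{a_0}$ has no attracting or neutral cycle and its only critical point is $c$, Ma\~n\'e's hyperbolicity theorem gives $\delta>0$, an integer $N$, and $\lambda_0>0$ such that whenever $x,f_{a_0}(x),\dots,f_{a_0}^{N-1}(x)$ all avoid $I_\delta:=(c-\delta,c+\delta)$ one has $(f_{a_0}^N)'(x)\ge e^{\lambda_0 N}$; by continuity this persists for $a\in\omega_0$. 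Because the critical point is cubic, near $c$ one has $f_a(x)-f_a(c)\asymp(x-c)^3$ and $f_a'(x)\asymp(x-c)^2$, so a return of the critical orbit to distance $e^{-r}$ from $c$ costs a derivative factor $\asymp e^{-2r}$, a harmless modification of the quadratic case.

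\textbf{Inductive exclusion and binding.} At step $n$ I would keep those $a$ satisfying the basic assumption $\mathrm{dist}(f_a^j(c),c)\ge e^{-\alpha j}$ for $j\le n$ with $\alpha$ small (the weaker form \eqref{mod-ba} also suffices). For $a\in\omega_0$ the orbit of $c$ shadows the finite preperiodic orbit of $a_0$ for a long initial time, providing free expansion before any return; after a close return at time $\nu$ to depth $r=r(\nu)\le\alpha\nu$ one defines a bound period $p(\nu)\asymp r/\Cr{kappa:ce}$ and proves, using \eqref{ce-est} at $a_0$ together with bounded distortion, the recovery estimate $(f_a^{p})'(f_a^{\nu+1}(c))\ge e^{(1-\tau)r}$, which more than compensates the factor $e^{-2r}$ lost at the return once one only counts a return as essential when $r$ is large and inserts the usual extra free iterate. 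Bounded distortion of $a\mapsto f_a^k(c)$ on the surviving intervals, and hence the comparison $C^{-1}\le |\partial_a f_a^{n}(c)|\big/(f_a^{n-1})'(f_a(c))\le C$, then follow by telescoping over the alternating free and bound blocks, so that the lengths of the image intervals $f_a^n(\omega)$ in parameter space are controlled by the dynamical expansion.

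\textbf{Large deviations and conclusion --- the main point.} The heart of the argument is the measure estimate. For each surviving interval one records the essential returns $\nu_1<\dots<\nu_s\le n$ with depths $r_1,\dots,r_s$ and sets $R_n=\sum_i r_i$; the total length of bound periods up to $n$ is $\asymp R_n/\Cr{kappa:ce}$. Each choice of a return position and depth $r_i$ removes a proportion $\lesssim e^{-r_i}$ of the current interval (this is where the parameter--to--phase comparison is used), while the number of admissible depth sequences with $\sum r_i=R$ is at most exponential in $R$ with an arbitrarily small exponent once the essentiality cut--off is taken large. Hence $|\{a\in\omega_0:\ R_n(a)\ge\beta n\}|\le e^{-\beta' n}|\omega_0|$ for some $\beta'>0$, and choosing $\beta$ small makes $\sum_n|\{R_n\ge\beta n\}|$ a small fraction of $|\omega_0|$, so that $E_1\cap\omega_0:=\{a\in\omega_0:\ R_n(a)<\beta n\ \text{for all }n\}$ has positive measure. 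For such $a$ the net derivative along $f_a(c),\dots,f_a^n(c)$ is at least the free expansion $\ge e^{\lambda_0'\cdot(\text{free time})}$ times the recovered bound expansion divided by $e^{2R_n}\ge e^{-2\beta n}$, which gives $(f_a^n)'(f_a(c))\ge Ce^{\kappa n}$ with $\kappa=\lambda_0'-2\beta-o(1)>0$ once $\beta$ is small enough. The main obstacle is precisely this measure bookkeeping: making the distortion constants and the transfer lemma uniform over all of $E_1$, handling the start--up segment where $c$ is bound to the finite orbit of $a_0$, and organizing the essential/inessential dichotomy and the induced partition of $\omega_0$ so that the geometric series in both the distortion estimate and the measure estimate converge; each of these is by now standard in the quadratic setting and only cosmetically affected by the cubic critical point and the degree--two covering.
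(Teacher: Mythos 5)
The paper does not actually give a detailed proof of Proposition~\ref{prop:CE:exp}; it only remarks that the statement follows from the large-deviation argument of Benedicks--Carleson (\cite{BC2}), and the detailed construction in the body of the paper is carried out for the stretched-exponential estimate and for the non-critical case $b<1$. Your proposal correctly identifies and sketches precisely that route: anchor the parameter induction at an MT parameter, use Ma\~n\'e's hyperbolicity outside a critical neighbourhood, set up binding/free blocks, exploit the transversality identity $\partial_a f_a^n(c)\asymp (f_a^{n-1})'(f_a(c))$ to transfer between phase and parameter space, and then do the BC2-style counting on $R_n=\sum r_i$ to obtain both the positive-measure conclusion and the exponential rate $\kappa=\lambda_0'-O(\beta)$. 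This is exactly the method the paper invokes, so in spirit and structure the proposal matches.

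There is, however, a quantitative slip in the bound-block estimate that, as written, would break the ``net gain'' claim. You correctly note that near the cubic critical point $f_a'(x)\asymp(x-c)^2$, so a return at depth $r$ costs a factor $\asymp e^{-2r}$. But the corresponding bound period is then $p\asymp 3r/\Cr{kappa:ce}$ (not $r/\Cr{kappa:ce}$): since $|f_a(x)-f_a(c)|\asymp e^{-3r}$, binding breaks when $e^{\Cr{kappa:ce}p}e^{-3r}\sim e^{-\beta p}$, giving $\Cr{kappa:ce}p\approx 3r$. The derivative recovered over the bound block (after the return iterate) is therefore $\asymp e^{\Cr{kappa:ce}p}\asymp e^{3(1-\tau)r}$, not $e^{(1-\tau)r}$. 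With the latter, $e^{-2r}\cdot e^{(1-\tau)r}=e^{-(1+\tau)r}$, which is a \emph{loss}, not a compensation, contradicting your own claim that the bound block ``more than compensates'' the return loss; and in the BC2 scheme the net gain $\asymp e^{r}$ over the combined return-plus-bound block (compare Lemma~\ref{deraftbd} and the estimate $p\leq 4r/\tilde\kappa$ from Lemma~\ref{le:ce}) is what makes the free/bound bookkeeping and the final $\kappa>0$ work out cleanly. This is an easily corrected arithmetic slip---once $p\asymp 3r/\Cr{kappa:ce}$ is used, the rest of the sketch, including the large-deviation count and the transfer lemma, is sound.
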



For a similar result see \cite{TTY}.

In the present paper we will consider the non-critical case $0<b<1$
and use more elementary methods based on \cite{BC1}, which 
give stretched exponential growth of the type

\begin{equation}\label{ce-est-stretched}
(f_{a,b}^n)'(f_{a,b}(c)) \geq  e^{n^{2/3} },\qquad   n_0\leq n \leq \hat{N}(a,b),
\end{equation}
for all $a\in \tilde{E}_b$ for a set $\tilde{E}_b$, which is a finite
union of intervals. To obtain this it is sufficient to do parameter
exclusions of the type

\begin{equation}\label{mod-ba2}
\text{dist}(f_{a,b}^j(c),c)\geq \Cl{c:cba-sqrt}e^{- \sqrt{j}},\quad j \geq 1,
\end{equation}
and then prove exponential expansion in Section 8. The discussion of
the proof is more elaborated at the end of this section.




We will outline the proof of Proposition \ref{prop:stretched:exp} after
the proof of Theorem A.

By the results of~\cite{BLvS03} and~\cite{BRLSvS08}, if $f_a(c)$
satisfies the Collet-Eckmann condition, then $f_a$ has an absolutely
continuous invariant measure. This is the analogue of Jakobson's
theorem~\cite{Ja81} in this case.

It is also possible to prove \eqref{ce-est} for $a$ in a set $E_1$
of positive Lebesgue measure, but with the present setup this would
require the method of Large deviations of \cite{BC2}, and this is not
required when  $0<b<1$.

Let us introduce some notations. For a fixed $b$, let us denote the
sets of those parameters $a$ for which $f_{a,b}$ has an attracting
(resp.\ neutral) orbit $T_b$ (resp.\ $TN_b$). Moreover, let $E_b$ be
the set of those parameters $a$ for which $f_{a,b}$ is
\emph{expanding}, that is, there exist $C>0$ and $\kappa>0$ such that
\begin{equation}\label{uniform}
(f_{a,b}^n)'(x)\geq C e^{\kappa n},   \qquad \forall n\geq 0\quad
\forall x\in{\mathbb T}.
\end{equation}
By the result of Ma\~n\'e~\cite{M85}, if $a$ does not belong to $T_b$
or $TN_b$, then it belongs to $E_b$. Observe that by the definition, a
small perturbation of an expanding map is also expanding, so $E_b$ is
open. In fact, the set $E=\{(a,b):a\in E_b, 0\le b<1\}$ is open in
$[0,1)\times[0,1)$.

Note that our definition of $E_1$ or $\tilde{E}_1$ is quite different
from the noncritical case, i.e. the case  of $E_b$ for $b<1$. Nevertheless,
there are some common features of the noncritical case, because if
$f_{a,b}$ is expanding, then by the results of Krzy\.zewski and
Szlenk~\cite{KS69}, or by the Lasota-Yorke Theorem~\cite{LY}, there
exists an absolutely continuous invariant measure.

Extending the methods of the proof of
\eqref{stretched-ce-est}, we prove the 
following theorem.

\begin{thma}\label{thma}
Let $a_0$ be a MT parameter for the family $\{f_a\}$. Denote
$\omega(\eps)=(a_0-\eps,a_0+\eps)$. Then for some $\eps_0>0$ there is
a function $b_0:(0,\eps_0) \to (0,1)$ such that
\[
\lim_{\eps\to 0}\frac{\inf\{|E_b\cap\omega(\eps)|:b\in(b_0(\varepsilon),1]\}}
{|\omega(\eps)|}=1.
\]
Here $|A|$ denotes the Lebesgue measure of the set $A$.
\end{thma}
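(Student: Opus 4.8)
The plan is to transfer the parameter-exclusion scheme used for $b=1$ (Proposition~\ref{prop:stretched:exp}) to a uniform-in-$b$ statement near an MT parameter $a_0$, and then use that the stretched-exponential bound \eqref{ce-est-stretched} forces expansion of $f_{a,b}$ on the circle for parameters $a$ in a large-measure subset of $\omega(\eps)$. Fix the MT parameter $a_0$. Since the critical orbit of $f_{a_0}$ is preperiodic and lands on a repelling cycle, the Collet--Eckmann estimate \eqref{ce-est} holds at $a_0$ with some rate $\Cr{kappa:ce}$; by continuity of $f_{a,b}$ and its derivatives in $(a,b)$ near $(a_0,1)$, the finite-time derivative bound $(f_{a,b}^n)'(f_{a,b}(c))\ge C e^{\Cr{kappa:ce}n/2}$ for $0\le n\le N$ persists on a neighborhood $U_N\times V_N$ of $(a_0,1)$, whose size shrinks with $N$. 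This is the standard ``start-up'' interval of the Benedicks--Carleson induction.

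Next I would run the inductive parameter exclusion of \cite{BC1}, in the form already invoked for Proposition~\ref{prop:stretched:exp}, but keeping $b$ as a frozen parameter in $(b_0,1]$. At stage $n$ one removes from the surviving set of $a$'s the sub-intervals where the basic-assumption bound \eqref{mod-ba2}, $\mathrm{dist}(f_{a,b}^j(c),c)\ge \Cr{c:cba-sqrt}e^{-\sqrt{j}}$, fails; the measure of the removed set at stage $n$ is bounded by $C e^{-\beta\sqrt n}|\omega(\eps)|$ (summable in $n$), with constants \emph{independent of $b$} as long as $b$ stays in a compact subinterval of $(0,1]$ on which the relevant distortion and expansion-outside-a-neighborhood-of-$c$ estimates are uniform. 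The key nonuniformity to control is that for $b<1$ there is genuinely no critical point, so the ``binding period'' analysis degenerates; I expect to exploit this in our favour — the return estimates are only easier for $b<1$ — so that the exclusion measure at each step is at most what it is at $b=1$. The outcome is a set $\tilde E_b\cap\omega(\eps)$, a finite union of intervals, with $|\tilde E_b\cap\omega(\eps)|\ge (1-\eta(\eps))|\omega(\eps)|$ where $\eta(\eps)\to 0$ as $\eps\to 0$, uniformly over $b\in(b_0(\eps),1]$, on which \eqref{ce-est-stretched} holds for $n_0\le n\le \hat N(a,b)$.

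Then I would invoke the Section~8 argument (``prove exponential expansion'') to upgrade the stretched-exponential derivative growth along the critical orbit to genuine uniform expansion of $f_{a,b}$ on all of $\mathbb T$: because $b<1$ there is no critical point, so once one knows strong growth along the post-critical orbit, a Mañé-type / bounded-distortion argument shows $f_{a,b}$ has no attracting or neutral cycle and hence, by \cite{M85}, lies in $E_b$. Thus $\tilde E_b\cap\omega(\eps)\subset E_b\cap\omega(\eps)$ up to the finitely many parameters where the induction terminates early (which carry measure $o(|\omega(\eps)|)$), giving $|E_b\cap\omega(\eps)|\ge(1-\eta(\eps))|\omega(\eps)|$ for every $b\in(b_0(\eps),1]$, and taking $\eps\to 0$ yields the claimed limit.

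The main obstacle is the \emph{uniformity in $b$ up to $b=1$} of all the constants in the Benedicks--Carleson induction: the distortion constant, the outside-expansion rate $\kappa$, the threshold $n_0$, and the per-step exclusion bound must not blow up as $b\nearrow 1$, even though the geometry is changing from ``diffeomorphism'' to ``one critical point.'' I would handle this by choosing $b_0(\eps)$ close enough to $1$ that on $[b_0(\eps),1]$ the map $f_{a,b}$ restricted away from a fixed small neighborhood of $c=1/2$ is uniformly expanding with a rate bounded below independently of $b$, and that the finite start-up length $N(\eps)$ needed to beat the additive constant $C$ in \eqref{ce-est-stretched} is attainable on all of $[b_0(\eps),1]$ — this is exactly where $b_0$ must be allowed to depend on $\eps$ and tend to $1$. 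The secondary point to check is that the ``early-termination'' parameters (where $\hat N(a,b)<\infty$) form a set of measure $o(|\omega(\eps)|)$; this follows because termination requires a deep return, whose probability is controlled by the same $e^{-\beta\sqrt n}$ tail.
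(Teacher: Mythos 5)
Your overall two-step strategy matches the paper's: run a Benedicks--Carleson parameter exclusion (driven by the approach-rate condition \eqref{mod-ba2}) to obtain stretched-exponential derivative growth along the orbit of $c$ on a large-measure subset of $\omega(\eps)$, with constants uniform in $b$ close to~$1$; then upgrade that to membership in $E_b$. However, two points are off.

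First, you misread the role of the stopping time $\hat N$. In the paper $\hat N$ is a \emph{deterministic} threshold, defined by $2-2b\geq\Cr{c:lder}e^{-2\sqrt{\hat N}}$, i.e.\ it depends only on $b$ (not on $a$), and reaching it is a \emph{gain}, not a loss: beyond time $\hat N$ the non-vanishing term $2-2b$ in the derivative \eqref{secondb} dominates near $c$, so no further (BA)-exclusions are necessary and the surviving set $\tilde E_b$ is a finite union of intervals. There is no set of ``early-termination parameters'' to estimate away, and treating termination as requiring a ``deep return'' with small probability is the wrong picture --- it reverses the logic that makes the non-critical case structurally simpler than $b=1$.

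Second, your final step (CE growth $\Rightarrow$ no attracting/neutral cycle $\Rightarrow$ expansion by Ma\~n\'e) differs from the paper and glosses over a point the paper explicitly flags: for $b<1$ the Collet--Eckmann-type estimate \eqref{CE0} along the orbit of $c$ is \emph{not} by itself sufficient; the paper combines it with the bound-period expansion estimates (Lemma~\ref{dercompBP}, used in Section~\ref{improvedsel}) to give a \emph{direct} proof of uniform hyperbolicity for all $x\in\mathbb T$ by decomposing each orbit into outside-segments (Lemma~\ref{mane}) and bound segments, rather than routing through Ma\~n\'e's dichotomy. Your route might be salvageable --- ruling out attracting/neutral cycles via CE growth and then invoking Ma\~n\'e --- but it requires an argument that the orbit of $c$ must track any attracting or neutral cycle of $f_{a,b}$ for $b<1$ (where $c$ is not a genuine critical point), and you should supply that, whereas the paper's direct argument sidesteps the issue. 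You should also make explicit where the bound period estimates enter; as written, your outline makes it sound as if CE alone suffices, which the paper cautions against.
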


This can be considered as the main result of the paper. It gives a
quantitative relation between the behavior of the system for $b<1$,
where the maps are local diffeomorphisms, and for $b=1$, the critical
case.

Finally, we prove a topological result, using very different methods.

\begin{thmb}\label{thmb}
For each $b<1$, the set $E_b$ is dense in the complement of $T_b$. In
particular, every interval of the parameters $a$ either is contained
in a closure of one tongue or intersects $E_b$.
\end{thmb}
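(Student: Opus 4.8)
The plan is to prove the contrapositive in the following form: if $J$ is an open interval of parameters $a$ with $J \cap E_b = \emptyset$, then $J$ is contained in the closure of a single tongue. The starting point is Mañé's theorem, quoted in the excerpt: a parameter $a \notin T_b \cup TN_b$ automatically lies in $E_b$. Hence $J \cap E_b = \emptyset$ forces $J \subset T_b \cup TN_b$, i.e. for every $a \in J$ the map $f_{a,b}$ has an attracting or neutral periodic orbit, and (as noted in the introduction, via the complexification $g_{a,b}$ and its single critical point) this orbit is unique. So the first step is to organize $J$ according to the combinatorial type (period and rotation-type data) of this unique non-repelling orbit.

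Next I would show that the set $TN_b$ of neutral parameters is small — indeed countable, or at worst nowhere dense in $J$. A neutral periodic orbit forces an algebraic relation between $a$ and $b$ (the multiplier has modulus $1$); for fixed $b$ this cuts out a proper real-analytic subvariety of the $a$-line unless $f_{a,b}$ is neutral on a whole subinterval, which is impossible because then by real-analyticity in $a$ the multiplier would be locally constant of modulus $1$, contradicting that the boundary of a tongue is where the multiplier crosses the unit circle (and using that $E_b$ is open, so its complement cannot contain such persistent neutral behavior surrounded by attracting parameters on both sides without an open attracting gap). Thus $J \setminus TN_b$ is a dense open subset of $J$, and on it every parameter is genuinely attracting, lying in some open tongue $T_b^{(i)}$.

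Now the core topological step: the (open) tongues $T_b^{(i)}$ are pairwise disjoint, and by Dezotti's theorem \cite{D10} each tongue is connected. I claim only one tongue can meet $J$. Suppose two distinct tongues $T_b^{(1)}, T_b^{(2)}$ both intersect $J$. Between a point of $J \cap T_b^{(1)}$ and a point of $J \cap T_b^{(2)}$ there is a parameter $a^\ast \in J$ on the common boundary $\partial T_b^{(1)} \cap J$, or more precisely a first exit point from $T_b^{(1)}$ inside $J$. At such an $a^\ast$ the attracting orbit of type $(1)$ has lost hyperbolicity, so $a^\ast \in \partial T_b^{(1)}$; since $a^\ast \in J \subset T_b \cup TN_b$, either $a^\ast \in TN_b$ or $a^\ast$ lies in another (open) tongue. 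Iterating, $J$ would have to be covered by the closures of the tongues it meets together with the countable set $TN_b \cap J$; but a nontrivial interval is not a countable union of disjoint relatively closed proper pieces unless there is just one of them (Baire category / Sierpiński-type argument: a connected set cannot be partitioned into countably many — more than one — pairwise disjoint nonempty closed sets). Hence exactly one tongue $T_b^{(i)}$ meets $J$, and $J \subset \overline{T_b^{(i)}}$, which is the desired conclusion. The final sentence of the statement is then immediate: an interval of parameters either misses $E_b$ — and is therefore inside the closure of one tongue — or it meets $E_b$.

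The step I expect to be the main obstacle is controlling $TN_b$ and, relatedly, ruling out pathological accumulation of infinitely many tiny tongues inside $J$ in a way that could tile $J$ without any single tongue being dense in it. The Sierpiński-type impossibility of partitioning a continuum into countably many ($\geq 2$) disjoint closed sets handles the clean case, but making it apply requires knowing that $TN_b \cap J$ is countable (or at least that the tongue closures meeting $J$ are in finite or countable supply with negligible boundaries), which is where one must invoke the finiteness/discreteness results on tongues at level $b<1$ from \cite{MR07} together with the real-analyticity of the multiplier in $a$. Once that structural input is in place, the rest is soft point-set topology built on Mañé's theorem and Dezotti's connectedness of tongues.
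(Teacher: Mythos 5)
Your overall scaffold is sound and closely parallels the paper's logic: start from Ma\~n\'e to get $J\subset T_b\cup TN_b$, use Dezotti's connectedness of tongues, and reduce to a topological statement about covering an interval by countably many closed pieces. The paper's own proof is phrased directly (show $T_b^c$ is perfect and $TN_b$ countable, hence $E_b=T_b^c\setminus TN_b$ is dense) rather than by contrapositive + Sierpi\'nski, but these are equivalent formulations. The problem is that the two load-bearing facts you flag as ``structural input'' are exactly where the content lies, and your sketches of them do not hold up.

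First, the Sierpi\'nski argument needs the tongue closures to be \emph{pairwise disjoint}, and you nowhere establish this. Consider the scenario where $J=(0,1)$ and the tongue components are $A_k=(2^{-k-1},2^{-k})$, $k\ge 1$, with $TN_b\cap J=\{2^{-k}\}$: then $J=\bigcup_k\overline{A_k}$, the closures touch at their endpoints, $TN_b\cap J$ is countable, infinitely many tongues meet $J$, and yet there is no countable partition of $J$ into disjoint nonempty closed sets — so Sierpi\'nski gives no contradiction and the conclusion ``$J\subset\overline{T_b^{(i)}}$ for a single $i$'' fails. Ruling this out is precisely the ``Claim'' in the paper's proof: if $a$ lies in the closure of two distinct components of $T_b$, then one classifies the local type of the neutral orbit at $a$ via Lemma~4.1 of \cite{MR07} (Cases~1, 2, 4) and excludes each case by Lemma~4.2 of \cite{MR07} and Theorem~4.1/Lemma~2.6 of \cite{MR08}. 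This is not a soft analyticity statement; it uses the specific structure of neutral periodic points for double standard maps. Without it, both your Sierpi\'nski step and the paper's ``$T_b^c$ has no isolated points'' step collapse.

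Second, your argument for countability of $TN_b$ is not rigorous as stated. There is no single real-analytic ``multiplier'' function of $a$: periodic orbits of different periods coexist, appear, and disappear, and the zero sets $\{(a,x):f_{a,b}^n(x)=x,\ (f_{a,b}^n)'(x)=\pm1\}$ are analytic varieties in $(a,x)$ whose projection to the $a$-axis need not be discrete a priori. The paper handles this differently: a parameter $a\in TN_b$ that is \emph{not} on the boundary of a tongue component must, again by Lemma~4.2 of \cite{MR07}, have a neutral orbit repelling from both sides (Case~4), and then Theorem~4.1/Lemma~2.6 of \cite{MR08} imply $a$ is isolated among parameters whose neutral orbit has the same period — hence only countably many such $a$. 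Together with the (countable) set of tongue-boundary points this gives countability of $TN_b$. So your instinct that ``one must invoke the discreteness results from \cite{MR07}'' is correct, but what is actually needed is the classification of neutral orbits and its consequences, not tongue finiteness or generic analyticity. As it stands, the proposal does not close either gap.
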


The above theorem in some sense complements Theorem~A. Locally it says
less about the set $E_b$, but it applies to all $b<1$, not only to $b$
sufficiently close to $1$ (moreover, this closeness in Theorem~A
depends on $a_0$).

The paper is organized as follows. In Section~\ref{Partition} we
introduce notation and some definitions that we will use throughout
the paper. In Section~\ref{Outline} we prove the \emph{transversality
  condition} for maps of the family \eqref{DSM}. In
Section~\ref{Outside} we prove that we have exponential growth of the
derivative for an orbit of a map $f_{a,b}$ that moves outside an open
interval containing the the critical point when $(a,b)$ is a small
perturbation of an MT parameter $(a_0,1)$. In
Section~\ref{sec:bound-free-ess} we describe the induction including
its startup and prove
that the conditions on the Induction Statement are satisfied for the
first free return time. Furthermore,
we define the \emph{bound period} and prove some results concerning
the derivative growth during the bound period. In Section~\ref{gdl}
we prove the Global Distortion Lemma and in Section~\ref{slargedev2} we
start the proof of  Theorem~A. In
Section~\ref{improvedsel} we finish this proof. Finally, in
Section~\ref{pfthmc} we prove Theorem~B.

\smallskip
Let us indicate what is technically new in this paper compared to
previous work. The proof of 
Theorem A  is based on techniques in \cite{BC1} and \cite{BC2}.

\medskip
The main strategy of  the proof of Theorem A is the inductive proof of

  \begin{equation}\label{stretched-ce-est1}
(f_{a,b}^n)'(f_{a,b}(c)) \geq \Cl{c:celbstretched}e^{n^{2/3} },   \qquad 
\end{equation}
up to a certain time $\hat{N}$.

The set $\tilde{E}_1$ in the critical case $b=1$  is a Cantor set of positive measure
represented as
$$
E_1=\bigcap_{n=0}^\infty A_n,
$$
where each $A_n$ is a disjoint union of intervals $\{I_n^j\}$. Here
$A_{n+1}\subset A_n$ and the set $A_{n+1}$ are defined by removing subintervals  
of each $I_{n}^j$ according to two rules: First those subintervals
that do not satisfy an 
approach rate condition \eqref{mod-ba} for the critical point (or inflexion
point) $c=\frac12$ are deleted.  This replaces the basic assumption (BA) in
\cite{BC1} and \cite{BC2}. In the non critical  case $0<b<1$ this
condition corresponds to \eqref{mod-ba2}.

The proof of Theorem A is different from that of
Proposition \ref{prop:stretched:exp} in the critical case due to the 
fact that the behaviour at the inflexion point $c=\frac{1}{2}$ is
given by the Taylor series 

\begin{align*}
f_{a,b}(x)&=a+(2-2b)(x-\frac12)+\sum_{k=1}^\infty
a_{2k+1}(x-\frac{1}{2})^{2k+1}\\
&=a+(2-2b)(x-\frac12)+g(x).
\end{align*}

Here $g(x)/(x-\frac12)^3$ is bounded above and below by  strictly
numerical positive constants, i.e they depending only on the MT map
$f_{a_0}$. Similarly $g'(x)/(x-\frac{1}{2})^2$ has similar bounds from 
above and below.

This means that when a point $y=f_{a,b}^n(x)$ is close to $c=\frac12$,
the derivative $f_{a,b}'(y)$ will either be dominated by $2-2b$ or by
the quadratic term $g'(y)$ and these two cases will be treated differently.

The induction in the non-critical case $b<1$ can actually be
terminated at the time $\hat{N}$ defined by the condition that  constant
term in the derivative  
$2-2b$ can be of size comparable to $g'(x)$, which is quadratic, i.e.
\begin{equation}\label{stoppingtime}
2-2b\sim (e^{-\sqrt{\hat{N}}})^2,
\end{equation}
where $\sim$ means that the two sides are comparable within fixed
constants, which are only depending on $f_{a_0}$. Defining $\hat{N}$ this
way we can stop the induction at time $\hat{N}$ and the remaining 
set $E_b=\bigcap_{n=0}^{\hat{N}} A_n(b)$ is a finite union of intervals.

Another new aspect of the present paper is that for $b<1$ that the condition
of Collet-Eckmann type (again with $c=\frac12$) 
\begin{equation}
(f_{a,b}^n)'(f_{a,b}(c))\geq C e^{\kappa n}\qquad \forall n\geq 0\label{CE0},
\end{equation}
is no longer sufficient for the existence of absolutely continuous
invariant measures for $f_{a,b}$. There is however another argument
given in Section \ref{improvedsel} which 
uses \eqref{CE0} together with {\it bound period  estimates},
see Section \ref{Outside}, which prove uniform hyperbolicity, \eqref{uniform}.

\section{Notation}\label{Partition}

Throughout this paper, $C$ is a general numeric constant. For a set
$A\subset\R$ we will denote by $|A|$ its Lebesgue measure.

Consider the family of {\it double standard maps} given by~\eqref{DSM}
with $b=1$. Throughout this paper we denote $f_a(x)=f_{a,1}(x)$ and
$\xi_j(a)$ the orbit of the critical point: $\xi_j(a)=f_a^j(c)$.

For a general $b\leq 1$, we also use the notation
$\xi_j(a,b)=f_{a,b}^j(c)$. It is clear that when $b<1$, the point
$c=\frac12$ is an inflexion point. Sometimes we also use the notations
$f(x,a,b)$ for $f_{a,b}(x)$ and $f(x,a)$ for $f_a(x)$.

By $\partial_a f_{a,b}^j(x)$ we denote the partial derivative of
$f_{a,b}^j(x)$ with respect to $a$ and by $\partial_a f_a^j(x)$ the
partial derivative of $f_a^j(x)$ with respect to $a$.

\begin{definition}\label{DefnMT} A parameter $a=a_0$ will be called an
\emph{MT parameter}, if there is an integer $m$ and a period length
$\ell$ such that $\xi_m(a_0)$ is a periodic point of $f_{a_0}$ of
period $\ell$ and the multiplier
$\Lambda:=(f_{a_0}^\ell)'(\xi_m(a_0))$ is larger than $1$.
\end{definition}

As in \cite{BC1} and \cite{BC2}, we define a partition ${\mathcal
  Q}=\{ I_{r,l} \}$ of the {\it return interval}
$I^{*}=(c-\delta,c+\delta)$, where $\delta=e^{-r_\delta}$. We first divide $I^{*}$
\[
I^{*}=\bigcup_{r=r_\delta}^\infty I_r\cup\bigcup_{r=r_\delta}^\infty I_{-r},
\]
where $I_r$ is the interval $(c+e^{-r-1},c+e^{-r})$ for $r_\delta\leq
r <\infty$,
and $I_{-r}$ is the interval $(c-e^{-r},c-e^{-r-1})$.

We then subdivide $I_r$ into $r^2$ intervals of equal length with
disjoint interiors as follows
\[
I_r=\bigcup_{\ell=0}^{r^2-1}I_{r,\ell}.
\]
For convenience we also use the convention that $I_{r,r^2}=I_{r-1,0}$,
$r>0$, and the analogous convention for $r<0$.

Note that we have $| I_{r,l}|=\frac{e^{-r}}{r^2}(1-e^{-1})$ and
$|I_{r}|= e^{-r}(1-e^{-1})$. We will also need the extended interval
$$
I_{r,\ell}^+=I_{r,\ell-1}\cup I_{r,\ell} \cup I_{r,\ell+1}.
$$

For technical reasons we will also need a partition ${\mathcal Q}'=\{
I_{r,l} \}$, $|r|\geq r_\delta^1$, of an interval
$I^{**}=(c-\delta_1,c+\delta_1)$, where $|r|\geq r_\delta^1$, for some
$r_\delta^1<r_\delta$, i.e.\ $\delta_1=e^{-r_\delta^1}>\delta$.

A main tool in this paper is a sequence of partitions
${\mathcal P}_n$, $n=0,1,2,\dots$ of the parameter space which is
induced by the phase space partition. We define
\[
{\mathcal E}_n=\bigcup_{\omega\in {\mathcal P}_n}\omega.
\]

We call a time $n$ a {\it free return} if there is a parameter
interval $\omega$ belonging to a partition $\mathcal{P}_n$ such that
\[
\xi_n(\omega)=I_{r,\ell}.
\]
Similarly if we fix $b<1$, we will have
\[
\xi_n(\omega,b)=I_{r,\ell}.
\]
(In some cases these two conditions
for technical reasons will have to be replaced by  
$I_{r,\ell}\subset \xi_n(\omega)\subset I_{r,\ell}^+$ or
$I_{r,\ell}\subset \xi_n(\omega,b)\subset I_{r,\ell}^+$.)
\section{Transversality}\label{Outline}

In this section we prove the \emph{transversality condition} for maps
belonging to the family \eqref{DSM}.

\begin{lemma}\label{dera}
The following formula holds:
\begin{equation}\label{dera-eq}
\partial_a f^n_{a,b}(x)=\sum_{k=0}^{n-1}(f^k_{a,b})'(f^{n-k}_{a,b}(x))
=(f^{n-1}_{a,b})'(f_{a,b}(x))\sum_{k=0}^{n-1}
\frac1{(f^k_{a,b})'(f_{a,b}(x))}.
\end{equation}
\end{lemma}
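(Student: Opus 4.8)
The plan is to prove the identity by induction on $n$, differentiating the composition relation $f^n_{a,b}(x) = f_{a,b}(f^{n-1}_{a,b}(x))$ with respect to $a$ and keeping careful track of the two ways in which $a$ enters: through the outer application of $f_{a,b}$ and through the iterate $f^{n-1}_{a,b}(x)$ sitting inside. For the base case $n=1$, note that $\partial_a f_{a,b}(x) = 1$ since $a$ appears as an additive term in \eqref{DSM}, and the right-hand side of \eqref{dera-eq} reduces to the single term $(f^0_{a,b})'(\cdot) = 1$, so the identity holds.

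For the inductive step I would write, using the chain rule,
\[
\partial_a f^{n}_{a,b}(x) = \partial_a\big[f_{a,b}\big(f^{n-1}_{a,b}(x)\big)\big]
= \big(\partial_a f_{a,b}\big)\big(f^{n-1}_{a,b}(x)\big)
+ f_{a,b}'\big(f^{n-1}_{a,b}(x)\big)\cdot \partial_a f^{n-1}_{a,b}(x).
\]
The first summand is $1$ because $\partial_a f_{a,b} \equiv 1$. Into the second summand I substitute the inductive hypothesis $\partial_a f^{n-1}_{a,b}(x) = \sum_{k=0}^{n-2}(f^k_{a,b})'(f^{n-1-k}_{a,b}(x))$, and then use the cocycle (chain rule) identity $f_{a,b}'(f^{n-1}_{a,b}(x))\cdot (f^k_{a,b})'(f^{n-1-k}_{a,b}(x)) = (f^{k+1}_{a,b})'(f^{(n)-(k+1)}_{a,b}(x))$ to reindex the sum; shifting $k+1 \mapsto k$ turns it into $\sum_{k=1}^{n-1}(f^k_{a,b})'(f^{n-k}_{a,b}(x))$, and adding the $k=0$ term (which is the $1$ from the first summand) gives exactly $\sum_{k=0}^{n-1}(f^k_{a,b})'(f^{n-k}_{a,b}(x))$, the first expression in \eqref{dera-eq}.

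For the second equality in \eqref{dera-eq}, I would simply factor: by the chain rule $(f^k_{a,b})'(f^{n-k}_{a,b}(x)) = (f^{n-1}_{a,b})'(f_{a,b}(x)) / (f^{k}_{a,b})'(f_{a,b}(x))^{-1}$ — more precisely, writing $(f^{n-1}_{a,b})'(f_{a,b}(x)) = (f^{n-1-k}_{a,b})'(f^{k+1}_{a,b}(x))\cdot (f^k_{a,b})'(f_{a,b}(x))$ one sees that each term of the first sum equals $(f^{n-1}_{a,b})'(f_{a,b}(x))\cdot \big((f^k_{a,b})'(f_{a,b}(x))\big)^{-1}$, so factoring the common quantity $(f^{n-1}_{a,b})'(f_{a,b}(x))$ out of the sum yields the claimed product form. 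I do not expect any serious obstacle here; the only point requiring a little care is the bookkeeping of the arguments (which iterate of $f_{a,b}$ the derivative $(f^k_{a,b})'$ is evaluated at) when reindexing, and the implicit nondegeneracy assumption that the derivatives $(f^k_{a,b})'(f_{a,b}(x))$ are nonzero so that the division in the second form is legitimate — which holds on the region of interest since for $b<1$ the map is a local diffeomorphism and for $b=1$ one restricts to orbits avoiding the critical point.
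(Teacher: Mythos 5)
Your proposal is correct and follows essentially the same route as the paper: establish the recursion $\partial_a f^{n}_{a,b}(x) = 1 + f_{a,b}'(f^{n-1}_{a,b}(x))\,\partial_a f^{n-1}_{a,b}(x)$, prove the sum formula by induction using the chain-rule reindexing, and then factor out $(f^{n-1}_{a,b})'(f_{a,b}(x))$ to get the second form. One small bookkeeping point to make precise in the last step: the identity you write, $(f^k_{a,b})'(f^{n-k}_{a,b}(x)) = (f^{n-1}_{a,b})'(f_{a,b}(x))\cdot\bigl((f^k_{a,b})'(f_{a,b}(x))\bigr)^{-1}$, is not literally term-by-term for the same index $k$; rather, $(f^k_{a,b})'(f^{n-k}_{a,b}(x)) = (f^{n-1}_{a,b})'(f_{a,b}(x))\cdot\bigl((f^{n-1-k}_{a,b})'(f_{a,b}(x))\bigr)^{-1}$, and the substitution $k \mapsto n-1-k$ (a bijection of $\{0,\dots,n-1\}$) then produces the stated sum — which is exactly the reindexing the paper performs.
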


\begin{proof}
We have
\begin{equation}\label{dera1}
\partial_a f^{n+1}_{a,b}(x)=1+f_{a,b}'(f^n_{a,b}(x))\cdot\partial_a
f^n_{a,b}(x)
\end{equation}
(note that $\partial_a f^0_{a,b}(x)=0$ and $\partial_a f^1_{a,b}(x)=1$).
Using this formula, we prove by induction
\begin{equation}\label{dera2}
\partial_a f^n_{a,b}(x)=\sum_{k=0}^{n-1}(f^k_{a,b})'(f^{n-k}_{a,b}(x)).
\end{equation}

If $n=0$, then both sides of~\eqref{dera2} are 0. Assume now
that~\eqref{dera2} holds for some $n$ and prove it for $n+1$ instead:
\begin{align*}
\partial_a f^{n+1}_{a,b}(x)&=1+f_{a,b}'(f^n_{a,b}(x))\cdot\sum_{k=0}^{n-1}
(f^k_{a,b})'(f^{n-k}_{a,b}(x))
=1+\sum_{k=0}^{n-1}(f^{k+1}_{a,b})'(f^{n-k}_{a,b}(x))\\
&=1+\sum_{k=1}^n(f^k_{a,b})'(f^{n-(k-1)}_{a,b}(x))=\sum_{k=0}^n
(f^k_{a,b})'(f^{(n+1)-k}_{a,b}(x)).
\end{align*}
Thus, by induction,~\eqref{dera2} holds for every $n$.

Now, we have
\[
(f^{n-k-1}_{a,b})'(f_{a,b}(x))\cdot
(f^k_{a,b})'(f^{n-k}_{a,b}(a))=(f^{n-1}_{a,b})'(f_{a,b}(x)).
\]
{}From this and~\eqref{dera2} we get
\begin{align*}
\partial_a f^n_{a,b}(x)&=(f^{n-1}_{a,b})'(f_{a,b}(x))\sum_{k=0}^{n-1}
\frac1{(f^{n-k-1}_{a,b})'(f_{a,b}(x))}\\
&=(f^{n-1}_{a,b})'(f_{a,b}(x))
\sum_{k=0}^{n-1}\frac1{(f^k_{a,b})'(f_{a,b}(x))}.
\end{align*}
\end{proof}

We get the following immediate corollary.

\begin{corollary}\label{dera_cor}
We have $\partial_a f_{a,b}(x)\equiv 1$ and if $n>0$ then $\partial_a
f^n_{a,b}(x)\ge 1$. Moreover,
\begin{equation}\label{qn-eq}
\frac{\partial_a\xi_n(a,b)}{(f_{a,b}^{n-1})'(f_{a,b}(c))}
=\sum_{k=0}^{n-1}\frac1{(f^k_{a,b})'(f_{a,b}(c))},
\end{equation}
so, in particular,
\begin{equation}\label{derax}
\partial_a\xi_n(a,b)\ge (f_{a,b}^{n-1})'(f_{a,b}(c)).
\end{equation}
In a special case, when there is a constant $\Cr{c:celbstretched}$ 
so that
\begin{equation}\label{eq:lower}
(f^\nu_{a,b})'(f_{a,b}(c))\geq \Cr{c:celbstretched}e^{\nu^{2/3}},\qquad
\nu=0,1,2,\dots,n-1
\end{equation}
then for all $\nu\leq n$ we obtain by combining the inequality  with 
the lower bound \eqref{derax}
\begin{equation}\label{deraceq}
1\leq\frac{\partial_a\xi_n(a,b)}{(f_{a,b}^{n-1})'(f_{a,b}(c))}\le q_*,
\end{equation}
where
\[
q_*=\sum_{i=0}^\infty \Cr{c:celbstretched}^{-1}e^{-i^{2/3}}.
\]
\end{corollary}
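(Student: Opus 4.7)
The plan is to obtain everything in the corollary as direct consequences of Lemma~\ref{dera} (applied at $x=c$) together with the sign of $f_{a,b}'$ for $b\le 1$.

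First, I would verify the initial assertions by hand. Direct differentiation of \eqref{DSM} in $a$ gives $\partial_a f_{a,b}(x)\equiv 1$. For the lower bound $\partial_a f^n_{a,b}(x)\ge 1$ when $n>0$, I would apply the first form in \eqref{dera-eq}. The key observation is that $f_{a,b}'(x)=2+2b\cos 2\pi x\ge 0$ for all $b\in[0,1]$, so every factor entering the chain-rule product $(f^k_{a,b})'=\prod_{i=0}^{k-1}f_{a,b}'(f^i_{a,b}(\cdot))$ is nonnegative, hence each summand $(f^k_{a,b})'(f^{n-k}_{a,b}(x))$ is $\ge 0$. Since the $k=0$ term equals $(f^0_{a,b})'(\cdot)=1$, the sum is at least $1$.

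For \eqref{qn-eq} and \eqref{derax}, I would just plug $x=c$ into the second identity in \eqref{dera-eq}. Since $\xi_n(a,b)=f^n_{a,b}(c)$ and $c=1/2$ does not depend on $a$, we have $\partial_a\xi_n(a,b)=\partial_a f^n_{a,b}(x)\bigr|_{x=c}$. Dividing by the common factor $(f_{a,b}^{n-1})'(f_{a,b}(c))$ gives \eqref{qn-eq} directly. The bound \eqref{derax} then follows because the $k=0$ term of the sum on the right of \eqref{qn-eq} is $1$ and, by the same nonnegativity argument as above, all other terms are $\ge 0$.

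Finally, for \eqref{deraceq}, the left inequality is just \eqref{derax} rewritten. For the right inequality, I would use the hypothesis \eqref{eq:lower} termwise: for $0\le k\le n-1$,
\[
\frac{1}{(f^k_{a,b})'(f_{a,b}(c))}\le \Cr{c:celbstretched}^{-1}e^{-k^{2/3}},
\]
sum over $k$, and extend the range to $k=0,1,2,\dots$ to majorize by the convergent series defining $q_*$. (Convergence of $\sum e^{-k^{2/3}}$ is immediate since $k^{2/3}\to\infty$.) No step here is a genuine obstacle; the only small thing to be careful about is that $c$ is an $a$-independent constant, so that $\partial_a\xi_n(a,b)$ really coincides with $\partial_a f^n_{a,b}(x)\bigr|_{x=c}$ and Lemma~\ref{dera} applies verbatim.
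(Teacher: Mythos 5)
Your proof is correct and takes exactly the route the paper intends: the corollary is stated as an immediate consequence of Lemma~\ref{dera}, and you have simply supplied the details (specialization to $x=c$ using that $c$ is $a$-independent, nonnegativity of $f_{a,b}'=2+2b\cos 2\pi x$ for $b\le 1$ to isolate the $k=0$ term, and termwise majorization via \eqref{eq:lower} to get $q_*$). Nothing to add.
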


\begin{remark}We would like to emphasize the central role that
  Corollary \ref{dera_cor} plays in this paper. We prove the
  estimate \eqref{eq:lower} successively by induction on $\nu$. We can
  then conclude that \eqref{deraceq} holds with $n$ replaced by $\nu$  for a given
  $\nu$. From this estimate we conclude that the $x$- and
  $a$-derivative are comparable at a given time $\nu$. It is important
  that we prove the $x$-expansion first and then verify the
  comparison. The constant $q_*$ will be fixed, i.e. it only depends on 
$f_{a_0}$.
\end{remark}
We will also need the following lemma which can be viewed as a lower
bound for the Radon-Nikodym derivative of $\xi_\nu(a,b)\mapsto
\xi_\mu(a,b)$, $\nu<\mu$, (with respect to  $a\in\omega$).

\begin{lemma}\label{le:growth}
Suppose that $\omega$ is a parameter interval 
and $\nu<\mu$. Assume further that there is a constant $q'$ such that
for all $t\in\omega$
\begin{equation}\label{ax-dist-gen}
(f_{t,b}^{\nu-1})'(f_{t,b}(c))\ge\frac1{q'}\partial_a\xi_\nu(t,b).
\end{equation}
Then
\[
|\xi_{\mu}(\omega,b)|\geq \frac1{q'}\inf_{a\in\omega}
(f_{a,b}^{\mu-\nu})'(f_{a,b}^\nu(c))\cdot|\xi_\nu(\omega,b)|.
\]

\end{lemma}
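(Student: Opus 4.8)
The plan is to estimate $|\xi_\mu(\omega,b)|$ from below by writing it as an integral over $\omega$ of the parameter-derivative $\partial_a\xi_\mu(t,b)$, and then to factor that derivative through the intermediate time $\nu$ using the chain rule. First I would note that since $\xi_\mu(\cdot,b)$ is $C^1$ on $\omega$ (with $\partial_a\xi_\mu\ge 1>0$ by Corollary~\ref{dera_cor}, so it is monotone and $\xi_\mu(\omega,b)$ is an interval), we have
\[
|\xi_\mu(\omega,b)|=\int_\omega \partial_a\xi_\mu(t,b)\,dt.
\]
Next I would apply the chain rule to $\xi_\mu=f_{t,b}^{\mu-\nu}\circ\xi_\nu$ in the variable $a=t$: differentiating gives
\[
\partial_a\xi_\mu(t,b)=(f_{t,b}^{\mu-\nu})'\bigl(\xi_\nu(t,b)\bigr)\cdot\partial_a\xi_\nu(t,b)
=(f_{t,b}^{\mu-\nu})'\bigl(f_{t,b}^\nu(c)\bigr)\cdot\partial_a\xi_\nu(t,b).
\]

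Now I would invoke the hypothesis. By \eqref{ax-dist-gen}, $\partial_a\xi_\nu(t,b)\le q'\,(f_{t,b}^{\nu-1})'(f_{t,b}(c))$ — but that is an upper bound, whereas here we need a lower bound on $\partial_a\xi_\nu$, so instead I would use \eqref{ax-dist-gen} read the other way together with \eqref{derax}: \eqref{derax} gives $\partial_a\xi_\nu(t,b)\ge(f_{t,b}^{\nu-1})'(f_{t,b}(c))$, and hypothesis \eqref{ax-dist-gen} rearranges to $\partial_a\xi_\nu(t,b)\le q'(f_{t,b}^{\nu-1})'(f_{t,b}(c))$; the point of \eqref{ax-dist-gen} in this lemma is rather to let us bound the $x$-derivative growth factor. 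Concretely, I would simply keep $\partial_a\xi_\nu(t,b)$ as is and instead bound $|\xi_\nu(\omega,b)|=\int_\omega\partial_a\xi_\nu(t,b)\,dt$. Then
\[
|\xi_\mu(\omega,b)|=\int_\omega (f_{t,b}^{\mu-\nu})'\bigl(f_{t,b}^\nu(c)\bigr)\,\partial_a\xi_\nu(t,b)\,dt
\ge \inf_{a\in\omega}(f_{a,b}^{\mu-\nu})'\bigl(f_{a,b}^\nu(c)\bigr)\int_\omega\partial_a\xi_\nu(t,b)\,dt,
\]
and the last integral equals $|\xi_\nu(\omega,b)|$. This already gives the claimed bound \emph{without} the factor $1/q'$, which is stronger; the role of \eqref{ax-dist-gen} with the constant $q'$ presumably enters because in the intended application one does not have $\xi_\nu(\cdot,b)$ literally monotone or wants the statement uniform, and the $1/q'$ provides the necessary slack. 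To honor the statement as written, I would, after establishing the chain-rule identity, lower-bound $\partial_a\xi_\nu(t,b)$ by using \eqref{ax-dist-gen} and \eqref{derax} to see that $\partial_a\xi_\nu(t,b)\ge\frac1{q'}\partial_a\xi_\nu(t,b)$ trivially, or more to the point keep the chain of inequalities conservative by inserting the harmless factor $1/q'\le 1$, and conclude.

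The main obstacle is really a bookkeeping/convention issue rather than a deep one: making sure that $\xi_\mu(\omega,b)$ is genuinely an interval whose length equals $\int_\omega\partial_a\xi_\mu$, i.e.\ that $t\mapsto\xi_\mu(t,b)$ is monotone on $\omega$ — this follows from $\partial_a\xi_\mu\ge 1>0$ in Corollary~\ref{dera_cor}, which holds unconditionally, so there is no circularity with the inductive Collet--Eckmann estimate. A secondary point to handle carefully is that the infimum of $(f_{a,b}^{\mu-\nu})'(f_{a,b}^\nu(c))$ over $a\in\omega$ is attained or at least approached, which is fine by continuity and compactness of $\bar\omega$ (or one simply works with the infimum directly inside the integral estimate). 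With those two points dispatched, the proof is the two-line chain-rule computation above followed by pulling the infimum out of the integral, so I expect it to be short.
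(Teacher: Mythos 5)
Your chain-rule step contains a genuine error. You write, as an identity,
\[
\partial_a\xi_\mu(t,b)=(f_{t,b}^{\mu-\nu})'\bigl(\xi_\nu(t,b)\bigr)\cdot\partial_a\xi_\nu(t,b),
\]
but this drops a term: the iterate $f_{t,b}^{\mu-\nu}$ itself depends on the parameter $t$, so differentiating $\xi_\mu(t,b)=f_{t,b}^{\mu-\nu}\bigl(\xi_\nu(t,b)\bigr)$ in $t$ gives
\[
\partial_a\xi_\mu(t,b)=\bigl(\partial_a f_{t,b}^{\mu-\nu}\bigr)\bigl(\xi_\nu(t,b)\bigr)
+(f_{t,b}^{\mu-\nu})'\bigl(\xi_\nu(t,b)\bigr)\cdot\partial_a\xi_\nu(t,b),
\]
with a first term coming from the direct $a$-dependence of the composed map. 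This is precisely what the recursion~\eqref{dera1} records through its ``$+1$'': a parameter-dependent iterate carries its own $\partial_a$ contribution, which is exactly the pitfall that Lemma~\ref{dera} and Corollary~\ref{dera_cor} are designed to handle. Your subsequent display $|\xi_\mu(\omega,b)|=\int_\omega (f_{t,b}^{\mu-\nu})'(f_{t,b}^\nu(c))\,\partial_a\xi_\nu(t,b)\,dt$ is therefore also false as an equality.

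Fortunately, the dropped term is nonnegative (indeed $\partial_a f_{a,b}^n(x)\geq 1$ for $n\geq 1$ by Corollary~\ref{dera_cor}, and $\mu-\nu\geq 1$), so the identity you need only as an inequality ``$\geq$'' is correct, and your conclusion then follows after pulling the infimum out of the integral. Once repaired in this way your argument is essentially valid and actually gives the slightly sharper bound without the factor $1/q'$ and without invoking the hypothesis~\eqref{ax-dist-gen}. That differs from the paper's route, which lower-bounds $\partial_a\xi_\mu$ by the single $k=0$ term $(f_{t,b}^{\mu-1})'(f_{t,b}(c))$ of~\eqref{dera2}, factors the $x$-derivative as $(f^{\mu-\nu})'(\xi_\nu)\cdot(f^{\nu-1})'(f(c))$, and then uses~\eqref{ax-dist-gen} to convert $(f^{\nu-1})'(f(c))$ back into $\frac1{q'}\partial_a\xi_\nu$; that conversion is where the $1/q'$ comes from. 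Your unease in the last paragraph about ``the role of $q'$'' was a symptom of the missing term in your chain rule, not a merely cosmetic issue, and should be resolved by fixing the equality to an inequality rather than left as a hand-wave.
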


\begin{proof}
By Corollary~\ref{dera_cor},
\begin{equation}\label{deco1}
|\xi_{\mu}(\omega,b)|=\int_\omega \partial_a\xi_{\mu}(t,b)\,dt\ge
\int_\omega (f_{t,b}^{\mu-1})'(f_{t,b}(c))\,dt.
\end{equation}
However, by~\eqref{ax-dist-gen} we have
\[
(f_{t,b}^{\mu-1})'(f_{t,b}(c))=(f_{t,b}^{\mu-\nu})'(f_{t,b}^\nu(c))
\cdot (f_{t,b}^{\nu-1})'(f_{t,b}(c))\ge\inf_{a\in\omega}
(f_{a,b}^{\mu-\nu})'(f_{a,b}^\nu(c))\cdot
\frac1{q'}\partial_a\xi_\nu(t,b).
\]
Together with~\eqref{deco1}, we get
\[
|\xi_{\mu}(\omega,b)|\ge \frac1{q'}\inf_{a\in\omega}
(f_{a,b}^{\mu-\nu})'(f_{a,b}^\nu(c))\cdot
\int_\omega\partial_a\xi_\nu(t,b)\,dt= \frac1{q'}\inf_{a\in\omega}
(f_{a,b}^{\mu-\nu})'(f_{a,b}^\nu(c))\cdot|\xi_\nu(\omega,b)|.
\]
\end{proof}

\section{The outside expansion}\label{Outside}

The aim of this section is to prove that we have exponential growth of
the derivative for an orbit of a map $f_{a,b}$ that moves outside an
open interval $I$ containing $c$, when $(a,b)$ is a small perturbation
of an MT parameter $(a_0,1)$. We consider the parameter space
$\R/\Z\times(0,1]$, and when we speak of a neighborhood of $(a_0,1)$,
we mean its neighborhood in this space.

By $|x-y|$ we denote the distance between $x$ and $y$ on the circle.
Since the points $x$ and $y$ will be usually close to each other,
this makes perfect sense. Denote
\begin{equation}\label{distcrit}
\overline{d}=\min_{j\geq 1}|c-f_{a_0}^j(c)|.
\end{equation}
By the definition of an MT parameter, we have $\overline{d}>0$.

Since $f_{a_0}$ has negative Schwarzian derivative, the following lemma
follows immediately from Theorem~1.3 of~\cite{Mis1} (in a general case
one could use also the result of Ma\~n\'e (see \cite{M85})).

\begin{lemma}\label{m0}
Let $I$ be an open interval containing $c$. Then there exists a
neighborhood $\caln$ of $(a_0,1)$, positive constants $\Cl{c:ce},\Cl[Kappa-const]{kappa:mane}$,
and an integer $M_1$ such that if $(a,b)\in\caln$ then
\begin{itemize}
\item[(i)] if $x, f_{a,b} (x),\dots, f_{a,b}^{n-1}(x) \notin I$, then
\[
(f_{a,b}^n)'(x)> \Cr{c:ce} e^{\Cr{kappa:mane} n};
\]
\item[(ii)] if $x,f_{a,b}(x) \dots,\ f_{a,b}^{n-1}(x) \notin I$ and
$n\ge M_1$, then
\[
(f_{a,b}^n)'(x)> e^{\Cr{kappa:mane} n}.
\]
\end{itemize}
\end{lemma}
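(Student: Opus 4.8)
The plan is to derive Lemma~\ref{m0} from the hyperbolic-expansion results available for the single map $f_{a_0}$ and then propagate them to a neighborhood by a compactness/continuity argument. First I would recall that, since $f_{a_0}$ is an MT map, the forward orbit of the critical point $c$ eventually lands on a repelling periodic orbit, so $f_{a_0}$ has no attracting or neutral periodic orbit and $c$ is not recurrent. Negative Schwarzian derivative then puts us exactly in the setting of Theorem~1.3 of~\cite{Mis1} (equivalently Ma\~n\'e's theorem~\cite{M85}): for the fixed interval $I\ni c$, every compact invariant set of $f_{a_0}$ disjoint from $I$ is uniformly expanding. More precisely, there exist $\tilde C>0$ and $\tilde\kappa>0$ such that whenever $x,f_{a_0}(x),\dots,f_{a_0}^{n-1}(x)\notin I$ one has $(f_{a_0}^n)'(x)>\tilde C e^{\tilde\kappa n}$. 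This is the $b=1$, $a=a_0$ version of statement~(i).

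Next I would upgrade the multiplicative constant from $\tilde C$ (possibly $<1$) to $1$ at the cost of requiring $n$ large, which gives statement~(ii) at the base point: choose $M_1$ so that $\tilde C e^{\tilde\kappa M_1}\ge e^{(\tilde\kappa/2)M_1}$, and then set $\Cr{kappa:mane}:=\tilde\kappa/2$; for $n\ge M_1$ the bound $(f_{a_0}^n)'(x)>\tilde C e^{\tilde\kappa n}\ge e^{\Cr{kappa:mane}n}$ holds, while for all $n\ge 1$ we keep $(f_{a_0}^n)'(x)>\Cr{c:ce}e^{\Cr{kappa:mane}n}$ with $\Cr{c:ce}:=\tilde C$ (shrinking $\tilde\kappa$ only helps the inequality with the old constant). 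So both (i) and (ii) hold for the single map $(a_0,1)$, possibly after shrinking $I$ slightly or enlarging $M_1$.

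The remaining and genuinely substantive step is the perturbation to a neighborhood $\caln$ of $(a_0,1)$. Here I would argue as follows. The map $(x,a,b)\mapsto f_{a,b}(x)$ and its $x$-derivative are jointly continuous (indeed real-analytic). Fix a slightly larger open interval $I'\Supset I$ still with compact complement. For a finite but long block length $N_0$ (to be chosen depending on $\tilde C,\tilde\kappa$), uniform continuity on the compact set $(\mathbb T\setminus I')\times\{(a_0,1)\}$ gives a neighborhood $\caln$ of $(a_0,1)$ such that, for all $(a,b)\in\caln$, any orbit segment of length $\le N_0$ that stays outside $I$ satisfies $(f_{a,b}^m)'(x)> \tfrac12\tilde C e^{\tilde\kappa m}$, because each factor $f_{a,b}'(f_{a,b}^i(x))$ is within a controlled ratio of the corresponding factor for $f_{a_0}$ (the key point is that if the $f_{a,b}$-orbit stays outside $I$, then the nearby $f_{a_0}$-orbit stays outside the slightly smaller interval $I\setminus$ a collar, so the base estimate applies). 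Choosing $N_0$ so that $\tfrac12\tilde C e^{\tilde\kappa N_0}\ge 1$ and concatenating such blocks, a telescoping product argument promotes the finite-$N_0$ estimate to all $n$: write $n=qN_0+s$ with $0\le s<N_0$, bound the product over the full blocks by $(\tfrac12\tilde C e^{\tilde\kappa N_0})^q\ge e^{(\tilde\kappa/2)\,qN_0}$ for the exponential part and absorb the leftover block of length $s$ and all lost constants into a single $\Cr{c:ce}$. This yields (i) on $\caln$ with $\Cr{kappa:mane}=\tilde\kappa/2$, and (ii) by taking $M_1$ large enough that the leftover and constant factors are dominated, exactly as in the base case.

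I expect the main obstacle to be precisely the bookkeeping in this last step: one must ensure that an orbit segment staying outside $I$ for the perturbed map is shadowed closely enough by a genuine $f_{a_0}$-orbit segment staying outside a comparable interval, uniformly over segments of length up to $N_0$, so that the single-map expansion estimate can be invoked block-by-block. This is where the choice of the collar between $I$ and $I'$ and the size of $\caln$ interact, and where one must be careful that shrinking $\caln$ to control distortion over a block of length $N_0$ does not force $N_0$ to grow — it does not, because $N_0$ is fixed first (depending only on $\tilde C,\tilde\kappa$) and $\caln$ is chosen afterwards. Everything else is routine continuity and a telescoping product, and the constants $\Cr{c:ce}$, $\Cr{kappa:mane}$, $M_1$ are then fixed functions of the MT map $f_{a_0}$ and the chosen interval $I$, as claimed.
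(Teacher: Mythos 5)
Your proposal follows essentially the same route as the paper: invoke Theorem~1.3 of~\cite{Mis1} (Ma\~n\'e) to obtain expansion for $f_{a_0}$ over a fixed block length, propagate the block estimate to a small neighborhood $\caln$ by continuity and compactness, then concatenate blocks to get (i) for all $n$, and finally obtain (ii) by halving the exponent and taking $M_1$ large enough to absorb the multiplicative constant. The only difference is that you spell out explicitly the shadowing detail (an $f_{a,b}$-orbit outside $I$ has its $f_{a_0}$-shadow outside a slightly shrunk interval) that the paper's proof leaves implicit; this is a genuine and correct clarification, though your auxiliary interval $I'\Supset I$ is superfluous --- the relevant auxiliary interval is the smaller one.
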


\begin{proof}
By Theorem~1.3 of~\cite{Mis1} (or a result of Ma\~n\'e \cite{M85}),
there exists $L>0$ and $\Cr{kappa:mane}'>0$ such that if $x, f_{a_0}
(x),\dots, f_{a_0}^{L-1}(x) \notin I$, then $(f_{a_0}^L)'(x)>
e^{\Cr{kappa:mane}' L}$. Therefore, if $\caln$ is a sufficiently small
neighborhood of $(a_0,1)$, then for all $(a,b)\in\caln$ and $x$ such
that $x, f_{a,b} (x),\dots, f_{a,b}^{L-1}(x) \notin I$, we have
$(f_{a,b}^L)'(x)> e^{\Cr{kappa:mane}' L}$. Since the infimum of
$(f_{a,b}^i)'(x)$ over $(a,b)\in\caln$, $x\notin I$ and
$i=0,1,\dots,L-1$ is positive, there exists a positive constant $\Cr{c:ce}$
such that~(i) holds with $\Cr{kappa:mane}=\Cr{kappa:mane}'$. Thus it also holds
with ${\Cr{kappa:mane}}=\Cr{kappa:mane}'/2$, and then~(ii) holds with any
  \[
  M_1>\frac{-\log\Cr{c:ce}}{\Cr{kappa:mane}}.
\]
\end{proof}

Now we fix a positive constant $\beta>0$. It will depend only on the
unperturbed map $f_{a_0}$ and can be chosen as, say
$\frac{1}{100}\min(\tilde{\kappa},\Cr{kappa:mane1})$. Here 
$\tilde{\kappa}=(1/\ell)\log\Lambda$, where $\Lambda$ is the
multiplier of the repelling periodic point of the MT-point, and
$\Cr{kappa:mane1}$ is the, Lyaponov exponent in Lemma \ref{lfree}. 


\begin{definition}\label{BoundMT}
Let $x \in I^{**}=(c-\delta_1,c+\delta_1)$. We say that $x$ is
\emph{$\beta$-bound to the critical point $c$ up to time $p$  for $f_{a,b}$},
if $p$ is the maximal integer such that
\begin{equation}
|f_{a,b}^j(x)-f_{a,b}^j(c)|\leq e^{-\beta j},\qquad \forall j \leq p.
\end{equation}
\end{definition}

Observe that for every $a,b$ (where $b\le 1$) and every $x$ we have
\begin{equation}\label{derxbd}
f_{a,b}'(x)\le 4 \quad\text{and}\quad |f_{a,b}''(x)|\le 4\pi<13.
\end{equation}

Let us state a version of the Bound Distortion Lemma (see \cite{BC1}
and \cite{BC2}).

\begin{lemma}\label{bounddistMT}
If $\delta_1$ is sufficiently small, then there is a constant
$\Cl{c:bp}(\delta_1)>1$, which converges to $1$ as $\delta_1\to 0$, such
that for every $x \in I^{**}=(c-\delta_1,c+\delta_1)$ if $x$ is $\beta$-bound
to $c$ up to time $p$ for $f_{a_0}$, then
\begin{equation}\label{estimate1MT}
\frac{1}{\Cr{c:bp}}<\frac{(f_{a_0}^k)'(f_{a_0}(x))}
{(f_{a_0}^k)'(f_{a_0}(c))}<{\Cr{c:bp}}
\end{equation}
for all $k \leq p$. Moreover, there is a constant $\Cl{c:ubp}=\Cr{c:ubp}(\delta_1)
>0$, which converges to $0$ as $\delta_1\to 0$, such that
\begin{equation}\label{e2}
|f_{a_0}^k(x)-f_{a_0}^k(c)|<\Cr{c:ubp}
\end{equation}
for all $k \leq p$.
\end{lemma}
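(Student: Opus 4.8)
The plan is to prove the two estimates \eqref{estimate1MT} and \eqref{e2} simultaneously, treating the bound-distortion statement for $f_{a_0}$ as a perturbation of the standard Koebe/Denjoy-type distortion argument used in \cite{BC1}, \cite{BC2}. The starting observation is that if $x$ is $\beta$-bound to $c$ up to time $p$, then $|f_{a_0}^j(x)-f_{a_0}^j(c)|\le e^{-\beta j}$ for all $j\le p$, so all these intermediate points are extremely close and, moreover, their distances to $c$ are controlled: combining the approach-rate condition \eqref{mod-ba} for the MT orbit (which gives $\mathrm{dist}(f_{a_0}^j(c),c)\ge e^{-\sqrt j}$) with $e^{-\beta j}$ being exponentially smaller, we get that $f_{a_0}^j(x)$ stays uniformly away from $c$ by at least, say, $\tfrac12 e^{-\sqrt j}$ for $1\le j\le p$. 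Hence along the orbit segment $f_{a_0}(x),\dots,f_{a_0}^{p}(x)$ the map $f_{a_0}$ has derivative bounded and second derivative bounded (by \eqref{derxbd}), and is a local diffeomorphism with a definite lower bound on $|f_{a_0}'|$ that degrades only at rate controlled by the distance to $c$.

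First I would write the logarithm of the ratio in \eqref{estimate1MT} as a telescoping sum,
\[
\log\frac{(f_{a_0}^k)'(f_{a_0}(x))}{(f_{a_0}^k)'(f_{a_0}(c))}
=\sum_{j=1}^{k}\Bigl(\log f_{a_0}'(f_{a_0}^j(x))-\log f_{a_0}'(f_{a_0}^j(c))\Bigr),
\]
and bound each term by $\sup|(\log f_{a_0}')'|$ on the segment joining $f_{a_0}^j(x)$ and $f_{a_0}^j(c)$ times $|f_{a_0}^j(x)-f_{a_0}^j(c)|\le e^{-\beta j}$. The quantity $(\log f_{a_0}')' = f_{a_0}''/f_{a_0}'$ is bounded on that segment by $C/\mathrm{dist}(\cdot,c)\le C/(\tfrac12 e^{-\sqrt j}) = 2C e^{\sqrt j}$, using the lower bound on $|f_{a_0}'|$ near the inflection/critical point together with \eqref{derxbd}. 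Therefore the $j$-th term is $\le 2C e^{\sqrt j}e^{-\beta j}$, and since $\beta j - \sqrt j \to \infty$ the series $\sum_j 2C e^{\sqrt j - \beta j}$ converges to a finite bound $B(\delta_1)$; crucially, restricting to $x\in I^{**}=(c-\delta_1,c+\delta_1)$ forces $x$ (and hence the whole bound orbit, by \eqref{e2} which I prove in tandem) to stay in a small neighborhood, so the tail of the sum — indeed the whole sum — can be made as small as we like by taking $\delta_1$ small, because the first few terms involve $|f_{a_0}^j(x)-f_{a_0}^j(c)|$ which is itself $\le$ (const)$\cdot|x-c|\le$ (const)$\cdot\delta_1$ for the initial range of $j$ before the $e^{-\beta j}$ bound takes over. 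This yields $\Cr{c:bp}(\delta_1)=e^{B(\delta_1)}\to 1$. For \eqref{e2} I would argue that $|f_{a_0}^k(x)-f_{a_0}^k(c)|\le e^{-\beta k}$ for $k\ge$ some $k_0$, while for $k\le k_0$ one has $|f_{a_0}^k(x)-f_{a_0}^k(c)|\le 4^{k_0}|x-c|\le 4^{k_0}\delta_1$; choosing $\delta_1$ small makes both pieces smaller than any prescribed $\Cr{c:ubp}(\delta_1)\to 0$.

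The main obstacle is the circularity between \eqref{estimate1MT} and \eqref{e2}: the distortion bound needs the intermediate points to stay away from $c$, which is exactly what the approach-rate condition on the MT orbit gives for $f_{a_0}^j(c)$, but one must also know $f_{a_0}^j(x)$ does not wander close to $c$ — and that follows from $|f_{a_0}^j(x)-f_{a_0}^j(c)|\le e^{-\beta j}$ being much smaller than $e^{-\sqrt j}$, which is built into the definition of $\beta$-bound. So there is in fact no genuine circularity, only the need to book-keep the two exponential scales $e^{-\beta j}$ versus $e^{-\sqrt j}$ carefully and to check that the constant $\beta$ (chosen as a small multiple of the MT multiplier exponent and the Mañé exponent) is small enough that $\beta j$ never overtakes the places where we need room, while $\sqrt j = o(\beta j)$ guarantees summability. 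A secondary technical point is that everything must be stated for the unperturbed map $f_{a_0}$ only (the lemma is for $f_{a_0}$), so one does not yet need uniformity in $(a,b)$; that uniformity will be recovered later by perturbation, exactly as in Lemma~\ref{m0}.
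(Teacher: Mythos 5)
Your overall skeleton (telescoping the logarithm of the ratio and bounding each term by $|f_{a_0}''/f_{a_0}'|$ at the base orbit times the distance $|f_{a_0}^j(x)-f_{a_0}^j(c)|$) is exactly the paper's Denjoy-style argument. However, there are two genuine problems with the way you obtain the needed lower bound on $|f_{a_0}^j(x)-c|$.

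First, you invoke the approach-rate condition \eqref{mod-ba}, $\text{dist}(f_{a_0}^j(c),c)\geq e^{-\sqrt{j}}$, as though it were a property of the MT orbit. It is not: \eqref{mod-ba} is the \emph{parameter-exclusion} condition imposed on generic $a$ in the Cantor construction, and it need not hold at $a_0$ for small $j$. What the MT assumption actually gives — and what the paper uses — is the far stronger uniform bound $\overline{d}=\min_{j\geq 1}|c-f_{a_0}^j(c)|>0$ from \eqref{distcrit}: since the critical orbit is preperiodic and never returns to $c$, its distance to $c$ is bounded below by a fixed positive constant, not a $j$-dependent one. With $\overline{d}$ in hand, $f_{a_0}'(f_{a_0}^j(c))$ is bounded below uniformly in $j$, so your sum becomes $\sum_j C e^{-\beta j}$, which converges trivially; you never need the delicate $e^{\sqrt{j}}e^{-\beta j}$ bookkeeping at all.

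Second, and more seriously, even granting \eqref{mod-ba}, your assertion that $e^{-\beta j}$ is ``exponentially smaller'' than $e^{-\sqrt{j}}$ and hence $|f_{a_0}^j(x)-c|\geq\tfrac12 e^{-\sqrt{j}}$ fails outright for small $j$. Since $\beta$ is chosen very small (on the order of $\tfrac1{100}$ of the Lyapunov exponents), for $j=1,2,\dots$ one has $e^{-\beta j}>e^{-\sqrt{j}}$, so the triangle inequality $|f_{a_0}^j(x)-c|\geq e^{-\sqrt{j}}-e^{-\beta j}$ gives nothing. The paper avoids this by splitting the range at $p_1=\tfrac1{10}\log(1/\delta_1)$: for $j\leq p_1$ one uses the crude derivative bound $|f_{a_0}^j(x)-f_{a_0}^j(c)|\leq 4^j\delta_1\leq 4^{p_1}\delta_1$, which tends to $0$ as $\delta_1\to 0$ because $\tfrac{\log 4}{10}<1$; for $j>p_1$ one uses $|f_{a_0}^j(x)-f_{a_0}^j(c)|\leq e^{-\beta p_1}\to 0$. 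Both are $<\overline{d}/2$ once $\delta_1$ is small, so $|f_{a_0}^j(x)-c|\geq\overline{d}/2$ for \emph{all} $j\leq p$. This same split with $p_1\to\infty$ as $\delta_1\to 0$ is also what makes both pieces of the final sum vanish and hence $\Cr{c:bp}(\delta_1)\to 1$; your sketch gestures at this but does not pin it down. If you replace \eqref{mod-ba} by $\overline{d}$ and insert the $p_1$ split, your proof becomes the paper's.
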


\begin{proof}
Assume that $x$ is bound to $c$ up to time $p$. Now choose 
$p_1=\frac{1}{10}\log(1/\delta_1)$.

Then by~\eqref{derxbd} we can estimate
\[
|f_{a_0}^j(x)-f_{a_0}^j(c)|\le \delta_1 4^j\le \delta_1 4^{p_1}
\]
if $j\le p_1$ and
\[
|f_{a_0}^j(x)-f_{a_0}^j(c)|\le e^{-\beta j}\le e^{-\beta p_1}
\]
if $p_1<j\le p$. Thus, if $\delta_1$ is sufficiently small then
$|f_{a_0}^j(x)-f_{a_0}^j(c)|\le {\overline d}/2$ and therefore
\begin{equation}\label{m0a}
|f_{a_0}^j(x)-c|\ge \frac{\overline d}2
\end{equation}
for all $j\le p$.

This also proves the last statement of the lemma.

We have
\begin{align}
\begin{split}  
\frac{(f_{a_0}^k)'(f_{a_0}(x))}{(f_{a_0}^k)'(f_{a_0}(c))}
&=\prod_{j=1}^k \left(1+\frac{f_{a_0}'(f_{a_0}^j(x))-
f_{a_0}'(f_{a_0}^j(c))}{f_{a_0}'(f_{a_0}^j(c))}\right)\\
&\le\exp\left(\sum_{j=1}^k\frac{13|f_{a_0}^j(x)-f_{a_0}^j(c)|}
  {f_{a_0}'(f_{a_0}^j(c))}\right)\\
&\le\exp\left\{\Cl[K-const]{k:kboundp}\left(\delta_1p_14^{p_1}+\sum_{j=p_1+1}^ke^{-\beta
      j}\right)\right\}\label{m0c}.
\end{split}
\end{align}

The last sum in the exponential may be empty.



Similarly, using~\eqref{m0a}, we get
\begin{equation}\label{m0d}
\frac{(f_{a_0}^k)'(f_{a_0}(c))}{(f_{a_0}^k)'(f_{a_0}(x))}\le
\exp\left\{\Cl[K-const]{k:K4}\left(\delta_1p_14^{p_1}+\sum_{j=p_1+1}^ke^{-\beta j}\right)\right\}\
\end{equation}

The sums in~\eqref{m0c} and~\eqref{m0d} are
bounded by a constant, which only depends on $\delta_1$, so we
get~\eqref{estimate1MT}. Moreover, by~\eqref{m0c} and \eqref{m0d}, $\Cr{c:bp}>1$
converges to $1$ as $\delta_1\to 0$.
\end{proof}


  
Set $\tilde{\kappa}=(1/\ell)\log\Lambda$. Then there is a constant $C_6=C_6(a_0)$
so that 
\begin{equation}\label{estBP1}
(f_{a_0}^j)'(f_{a_0}(c))\geq \Cl{c:lfp} e^{\tilde{\kappa} j}
\end{equation}
for all $j\ge 1$.

At $c$, the first two derivatives of $f_a$ vanish, but the third one
does not. Therefore, there are positive constants $\Cl{c:lcr},\Cl{c:ucr}$ such that
for all $a$ sufficiently close to $a_0$
\begin{equation}\label{third}
\Cr{c:lcr}|x-c|^3<|f_a(x)-f_a(c)|<\Cr{c:ucr} |x-c|^3
\end{equation}
whenever $x\in I^{**}$. 
If $\delta_1$ is small, the constants $\Cr{c:lcr}$
and $\Cr{c:ucr}$ can be made close to each other. Similarly, for some
positive constants $\Cl{c:lder}',\Cl{c:uder}'$,
\begin{equation}\label{second}
\Cr{c:lder}'(x-c)^2< f_a'(x)<\Cr{c:uder}' (x-c)^2
\end{equation}
whenever $x\in I^{**}$. If instead of $f_a$ we consider $f_{a,b}$ with
$b$ sufficiently close to 1, we similarly obtain
\begin{equation}\label{thirdb}
|x-c|\left(2-2b+\Cr{c:lcr}(x-c)^2\right)<|f_{a,b}(x)-f_{a,b}(c)|<|x-c|\left(2-2b+\Cr{c:ucr}(x-c)^2\right) 
\end{equation}
and
\begin{equation}\label{secondb}
2-2b+\Cr{c:lder}(x-c)^2< f_{a,b}'(x)< 2-2b+\Cr{c:uder}(x-c)^2,
\end{equation}
and we chose $\Cr{c:lder}$ and  $\Cr{c:uder}$ so that these estimates are valid for 
all $b\leq 1$.
Moreover, we have
\begin{equation}\label{sec_der}
|f_{a,b}''(x)|\le 8\pi^2|x-c|<80|x-c|.
\end{equation}

In the following lemma we estimate the length of the bound period.

\begin{lemma}\label{le:ce}
If $\delta_1$ is sufficiently small, $x$ is $\beta$-bound to $c$ up to time
$p$ for $f_{a_0}$, and $x\in I^{**}\setminus\{c\}$, then
\begin{equation}\label{estp1}
p<-\frac{4}{\tilde{\kappa}}\log|x-c|.
\end{equation}
In the particular case when $x\in I_{\pm r}$ we obtain
\begin{equation}\label{eq-p-est-1}
p\leq \frac{4r}{\tilde{\kappa}}.
\end{equation}
\end{lemma}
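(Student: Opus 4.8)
The plan is to use the lower bound \eqref{estBP1} on the derivative growth along the critical orbit together with the upper bound \eqref{derxbd} on $f_{a_0}'$ and the definition of the bound period to pin down how long two nearby orbits can stay $e^{-\beta j}$-close. Fix $x\in I^{**}\setminus\{c\}$ and suppose $x$ is $\beta$-bound to $c$ up to time $p$. First I would estimate the separation of the orbits at time $p$ from below, using the Mean Value Theorem and the Bound Distortion Lemma \ref{bounddistMT}: since $x$ is bound up to time $p$, we have
\[
|f_{a_0}^p(x)-f_{a_0}^p(c)| \;=\; |f_{a_0}^{p-1}(f_{a_0}(x))-f_{a_0}^{p-1}(f_{a_0}(c))| \;\approx\; (f_{a_0}^{p-1})'(f_{a_0}(c))\,|f_{a_0}(x)-f_{a_0}(c)|,
\]
up to the factor $\Cr{c:bp}$ from \eqref{estimate1MT}. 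Then \eqref{estBP1} gives $(f_{a_0}^{p-1})'(f_{a_0}(c))\ge \Cr{c:lfp}e^{\tilde\kappa(p-1)}$, and \eqref{third} gives $|f_{a_0}(x)-f_{a_0}(c)|\ge \Cr{c:lcr}|x-c|^3$, so altogether
\[
|f_{a_0}^p(x)-f_{a_0}^p(c)| \;\ge\; \frac{1}{\Cr{c:bp}}\,\Cr{c:lfp}\,\Cr{c:lcr}\,e^{\tilde\kappa(p-1)}\,|x-c|^3 .
\]

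Next I would play this against the upper constraint coming from bound period. Because $p$ is maximal, at time $p+1$ the orbits are no longer $e^{-\beta(p+1)}$-close, but using $f_{a_0}'\le 4$ from \eqref{derxbd} one step cannot blow up the separation by more than a factor $4$, so still $|f_{a_0}^p(x)-f_{a_0}^p(c)|\le 4\,e^{-\beta(p+1)}\le 4e^{-\beta p}$ — actually more simply, by the definition of the bound period itself $|f_{a_0}^p(x)-f_{a_0}^p(c)|\le e^{-\beta p}$. Combining the two displays,
\[
\frac{1}{\Cr{c:bp}}\,\Cr{c:lfp}\,\Cr{c:lcr}\,e^{\tilde\kappa(p-1)}\,|x-c|^3 \;\le\; e^{-\beta p},
\]
hence $e^{(\tilde\kappa+\beta)p}\le \Cr{c:bp}\,\Cr{c:lfp}^{-1}\,\Cr{c:lcr}^{-1}\,e^{\tilde\kappa}\,|x-c|^{-3}$, and taking logarithms,
\[
p \;\le\; \frac{3}{\tilde\kappa+\beta}\,\log\frac{1}{|x-c|} \;+\; \frac{1}{\tilde\kappa+\beta}\log\!\Big(\Cr{c:bp}\,\Cr{c:lfp}^{-1}\,\Cr{c:lcr}^{-1}\,e^{\tilde\kappa}\Big).
\]
Since $x\in I^{**}\setminus\{c\}$ with $\delta_1$ small, $\log(1/|x-c|)\ge \log(1/\delta_1)$ is large, so the additive constant is absorbed into the main term, and since $3/\tilde\kappa < 4/\tilde\kappa$ we can crudely bound the resulting coefficient (including the absorbed constant) by $4/\tilde\kappa$. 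This gives \eqref{estp1}. For the particular case $x\in I_{\pm r}$, we have $e^{-r-1}\le |x-c|\le e^{-r}$, so $\log(1/|x-c|)\le r+1$, and feeding this into \eqref{estp1} (again absorbing the $+1$ into the slack between $3$ and $4$, valid for $r$ large, i.e. $\delta$ small) yields $p\le 4r/\tilde\kappa$, which is \eqref{eq-p-est-1}.

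The step I expect to be the main obstacle is making the absorption of constants fully rigorous while keeping the clean coefficient $4/\tilde\kappa$: one must check that $\delta_1$ (equivalently $r_\delta^1$) is chosen small enough that the additive logarithmic constant $\frac{1}{\tilde\kappa+\beta}\log(\Cr{c:bp}\Cr{c:lfp}^{-1}\Cr{c:lcr}^{-1}e^{\tilde\kappa})$ together with the discrepancy between $\frac{3}{\tilde\kappa+\beta}$ and $\frac{4}{\tilde\kappa}$ leaves room for the inequality, uniformly in $x$. A secondary subtlety is that Lemma \ref{bounddistMT} is stated for $x$ bound up to time $p$ and controls $(f_{a_0}^k)'(f_{a_0}(x))/(f_{a_0}^k)'(f_{a_0}(c))$ for $k\le p$; I would apply it with $k=p-1$, which is legitimate, and I should be slightly careful that the distortion constant $\Cr{c:bp}$ does not interfere with the asymptotics — but since $\Cr{c:bp}\to 1$ as $\delta_1\to 0$ it only strengthens the absorption argument.
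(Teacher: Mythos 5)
Your proposal is correct and follows essentially the same approach as the paper: lower-bound $|f_{a_0}^p(x)-f_{a_0}^p(c)|$ via the Bound Distortion Lemma together with \eqref{estBP1} and \eqref{third}, compare against an upper bound, take logarithms, and absorb the additive constants into the slack between the coefficient $3$ and the target $4$ by taking $\delta_1$ small. The only cosmetic difference is that you invoke the bound-period definition to get $|f_{a_0}^p(x)-f_{a_0}^p(c)|\le e^{-\beta p}$, whereas the paper simply uses the trivial bound $|f_{a_0}^p(x)-f_{a_0}^p(c)|<1$; both suffice.
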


\begin{proof}
By Lemma~\ref{bounddistMT}, we have
\[
|f_{a_0}^p(x)-f_{a_0}^p(c)|>\frac{(f_{a_0}^p)'(f_{a_0}(c))}{\Cr{c:bp}}
|f_{a_0}(x)-f_{a_0}(c)|.
\]
Taking into account~\eqref{estBP1} and~\eqref{third}, we get
\[
1>|f_{a_0}^p(x)-f_{a_0}^p(c)|>\frac{\Cr{c:lfp}\Cr{c:lcr}}{\Cr{c:bp}} e^{\tilde{\kappa} p}
|x-c|^3.
\]
If $\delta_1$ is small, then $\Cr{c:bp}<2$, so taking logarithms gives us
\[
\log\frac{\Cr{c:lfp} \Cr{c:lcr}}2+\tilde{\kappa} p+3\log|x-c|<0.
\]
If $\delta_1$ is small, then $-\log|x-c|$ is large, so we get
$-\log(\Cr{c:lfp} \Cr{c:lcr}/2)<-\log|x-c|$, and therefore $\tilde{\kappa} p<-4\log|x-c|$.
\end{proof}

We need a derivative estimate for an orbit of $f_{a_0}$ that
moves completely outside $I^*=(c-\delta,c+\delta)$ or
$I^{**}=(c-\delta_1,c+\delta_1)$ but returns to one of these intervals
at time $n$.

In the proof of the next lemma we will use the fact that $f_a$ has
negative Schwarzian derivative. This result can be generalized to the
$C^2$ case (see van Strien \cite{vST}).

\begin{lemma}\label{le:lowerboundret}
Let $\overline{d}$ be as in \eqref{distcrit}. For every $\delta_1 \in
(0, \overline{d}/2)$ and for every $n \geq 1$, if $x$ is such that
$f^j_{a_{0}}(x) \notin I^{**}$ for $j=0,\dots,n-1$, and $
f^n_{a_0}(x)\in I^{**}$, then $(f^n_{a_0})'(x)> \overline{d} /2.$
\end{lemma}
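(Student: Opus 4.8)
The plan is to use the standard Koebe/minimum-principle argument for maps with negative Schwarzian derivative, exploiting the fact that the orbit segment $x, f_{a_0}(x), \dots, f_{a_0}^{n-1}(x)$ avoids the interval $I^{**}=(c-\delta_1,c+\delta_1)$ and hence avoids the only critical point of $f_{a_0}$. First I would fix $x$ with $f_{a_0}^j(x)\notin I^{**}$ for $j=0,\dots,n-1$ and $f_{a_0}^n(x)\in I^{**}$, and look at the branch of $f_{a_0}^{-n}$ defined near $y=f_{a_0}^n(x)$ that sends $y$ back to $x$. Since all the preimages $f_{a_0}^{-j}(y)=f_{a_0}^{n-j}(x)$ for $j=1,\dots,n$ stay outside $I^{**}$, in particular away from $c$, this inverse branch extends as a diffeomorphism to a definite interval around $y$; the natural choice is the maximal interval on which the branch is defined and univalent, which by construction reaches at least to the nearer endpoint of $I^{**}$, i.e. contains a one-sided neighborhood of $y$ of length comparable to $\mathrm{dist}(y,c)$.

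The key step is then the minimum principle for negative Schwarzian derivative: if $\psi=f_{a_0}^{-n}$ is a univalent branch on an interval $J$, then $|\psi'|$ attains its minimum on $J$ at one of the endpoints of $J$. Equivalently, working with $f_{a_0}^n$ itself on the interval $\psi(J)\ni x$, the derivative $(f_{a_0}^n)'$ has no interior positive local minimum, so $(f_{a_0}^n)'(x)$ is bounded below by the value of $(f_{a_0}^n)'$ at a point mapping to an endpoint of $J$. I would take $J=(c,y]$ or $[y,c)$ depending on which side of $c$ the point $y$ lies — actually better, take $J$ to reach exactly to $c$ from the appropriate side so that one endpoint of $J$ is $c$ — note $\psi$ is univalent up to but not including $c$ since the obstruction is only the critical point itself. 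Then $\psi(J)$ is an interval with $\psi(c)$ an endpoint, and since $f_{a_0}$ is a degree-two circle map, $f_{a_0}^n$ maps $\psi(J)$ onto $J=(c,y]$ (or a set containing it) with $|J|=|y-c|=\mathrm{dist}(f_{a_0}^n(x),c)$. Because $\psi(J)$ has length at most $1$ (it lives in the circle), the mean value theorem gives a point $\xi\in\psi(J)$ with $(f_{a_0}^n)'(\xi)\ge |J|/|\psi(J)| \ge |y-c|$; combined with the minimum principle, $(f_{a_0}^n)'(x)\ge |y-c|$. Finally, since $y=f_{a_0}^n(x)\in I^{**}=(c-\delta_1,c+\delta_1)$ with $\delta_1<\overline d/2$, I still need a lower bound on $|y-c|$ — but $y$ could be arbitrarily close to $c$, so this last estimate must be replaced: one instead picks $J$ to have length exactly $\overline d /2$ on the side of $y$ away from $c$, which is possible precisely because $f_{a_0}^{n}(x)=y\in I^{**}$ and $I^{**}\subset(c-\overline d/2, c+\overline d/2)$ guarantees the interval of length $\overline d/2$ extending from $y$ away from $c$ does not reach $c$, hence the inverse branch is univalent there.

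The main obstacle, and the point requiring care, is exactly this choice of the interval $J$: one must verify that the inverse branch $f_{a_0}^{-n}$ extends univalently over an interval $J$ of definite length $\overline d/2$ containing $y$, using that none of $y, f_{a_0}^{-1}(y),\dots,f_{a_0}^{-(n-1)}(y)$ — equivalently $f_{a_0}^n(x),\dots,f_{a_0}(x)$ — lie within $\overline d/2$ of $c$, which follows from the avoidance of $I^{**}$ together with $\delta_1<\overline d/2$ and the definition \eqref{distcrit} of $\overline d$. Once $J$ is chosen of length $\overline d/2$, the minimum principle bounds $(f_{a_0}^n)'(x)$ below by $|f_{a_0}^n(J)|/|J| \ge |J| / 1 = \overline d/2$, since $f_{a_0}^n(J)\supseteq J$ is impossible to say directly, so instead I use: $f_{a_0}^n$ restricted to the component $\psi(J)$ is a diffeomorphism onto $J$, and at the endpoint of $J$ equal to $y$ we have $x=\psi(y)$, while the other endpoint maps to a point at distance $\overline d /2$ from $y$; thus $|f_{a_0}^n(\psi(J))|=|J|=\overline d/2$ and $|\psi(J)|\le 1$, giving by the mean value theorem some $\xi$ with $(f_{a_0}^n)'(\xi)\ge \overline d/2$, and then the minimum principle yields $(f_{a_0}^n)'(x)\ge \overline d/2$ as claimed. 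I would also remark that this is where negative Schwarzian derivative (or its $C^2$ surrogate via \cite{vST}) is essential, and that the same argument works verbatim for $I^*$ in place of $I^{**}$ since $\delta<\delta_1<\overline d/2$.
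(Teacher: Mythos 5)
Your overall strategy --- pull $f_{a_0}^n$ back diffeomorphically over an interval $J$ of definite length based at $y=f_{a_0}^n(x)$, apply the mean value theorem to $f_{a_0}^n\colon\psi(J)\to J$, and transfer the resulting derivative bound to $x$ via the no-interior-local-minimum property coming from negative Schwarzian derivative --- is in substance the paper's argument, merely phrased via the inverse branch rather than directly in terms of the nearest preimages $\eta_1<x<\eta_2$ of $c$ of order $<n$. However, both of the steps you yourself flag as delicate are carried out incorrectly, and the gaps are genuine.

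First, the justification for the univalent extension of the inverse branch over $J$ is wrong. You claim that $f_{a_0}(x),\dots,f_{a_0}^n(x)$ all lie at distance $\geq\overline d/2$ from $c$ ``because they avoid $I^{**}$ and $\delta_1<\overline d/2$.'' This is backwards: avoidance of $I^{**}$ only gives distance $>\delta_1$, which is \emph{weaker}, not stronger, than distance $>\overline d/2$ precisely because $\delta_1<\overline d/2$; worse, $y=f_{a_0}^n(x)\in I^{**}$ is itself at distance $<\delta_1<\overline d/2$ from $c$, contradicting your own list. Nor is it the relevant condition. What obstructs extending the inverse branch over $J$ is $J$ reaching a critical value $f_{a_0}^j(c)$, $1\leq j\leq n$, of $f_{a_0}^n$. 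By \eqref{distcrit} these satisfy $|f_{a_0}^j(c)-c|\geq\overline d$, hence $|f_{a_0}^j(c)-y|\geq\overline d-\delta_1>\overline d/2$, so an interval of length $\overline d/2$ attached to $y$ on either side misses them all. (The paper packages exactly this by taking $\eta_1,\eta_2$ the nearest preimages of $c$ of order $<n$ and noting that $f_{a_0}^n(\eta_i)=f_{a_0}^k(c)$ for some $k\geq1$.)

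Second, the concluding minimum-principle step does not close. You find $\xi\in\psi(J)$ with $(f_{a_0}^n)'(\xi)\geq\overline d/2$ by the mean value theorem, and then assert that the minimum principle yields $(f_{a_0}^n)'(x)\geq\overline d/2$. But $x$ is an \emph{endpoint} of $\psi(J)$: the minimum principle only says that the minimum of $(f_{a_0}^n)'$ on $\psi(J)$ is attained at $x$ or at the other endpoint, and nothing prevents it from being attained at $x$ --- in which case you learn $(f_{a_0}^n)'(x)\leq(f_{a_0}^n)'(\xi)$, the wrong direction. Your choice of $J$ ``on the side away from $c$'' does nothing to exclude this. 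To finish one must either do as the paper does --- observe that $(f_{a_0}^n)'$ is unimodal on $(\eta_1,\eta_2)$, so on one of $[\eta_1,x]$, $[x,\eta_2]$ its maximum is attained at $x$, and apply the mean value theorem on \emph{that} side --- or else push the inverse branch over \emph{both} one-sided intervals of length $\overline d/2$ (which the same estimate $|f_{a_0}^j(c)-y|>\overline d/2$ allows), obtain points $\xi_-<x<\xi_+$ with $(f_{a_0}^n)'(\xi_\pm)\geq\overline d/2$, and apply the minimum principle on $[\xi_-,\xi_+]$ with $x$ in the interior.
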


\begin{proof}
On each side of $x$ there are the two closest preimages of $c$ of order less
than $n$: $\eta_1<x$ and $\eta_2>x$. Then $f_{a_0}^n$ has positive
derivative on $(\eta_1,\eta_2)$ and has negative Schwarzian derivative
on that interval. Therefore on one of the intervals $[\eta_1,x]$ and
$[x,\eta_2]$ the maximum of the derivative $(f_{a_0}^n)'$ is attained
at $x$. We may assume that this is the interval $[\eta_1,x]$. Then
$f_{a_0}^n(\eta_1)=f_{a_0}^k(c)$ for some $k>0$, so
\[
|f_{a_0}^n(\eta_1)-f_{a_0}^n(x)|\ge\overline{d} -\delta_1 >
\overline{d} /2.
\]
By the Mean Value Theorem,
\[
|f_{a_0}^n(\eta_1)-f_{a_0}^n(x)|=(f_{a_0}^n)'(t) |\eta_1-x|
\le (f_{a_0}^n)'(t)
\]
for some $t\in(\eta_1,x)$, and thus,
\[
(f^n_{a_0})'(x)\ge (f_{a_0}^n)'(t)> \overline{d} /2.
\]
\end{proof}


\medskip

\medskip
In the following lemma we consider what we call a \emph{free period}.

\begin{lemma}\label{lfree}
Given $\delta_1$ sufficiently small, there is a neighborhood $\caln$ of $(a_0,1)$ in the parameter space
and positive constants $C^*$ and $\Cl[Kappa-const]{kappa:mane1}$, such that, if $(a,b)\in{\mathcal N}$ then if
$x,f_{a,b}(x),\dots, f_{a,b}^{q-1}(x)\not\in I^{**}$ and
$f_{a,b}^q(x)\in I^{**}$ then
\begin{equation}\label{estfree}
(f_{a,b}^q)'(x)\geq C^*e^{\Cr{kappa:mane1} q}.
\end{equation}
Here the constant $C^*$ depends only on the unperturbed map $f_{a_0}$,
while $\Cr{kappa:mane1}$  depends on $\delta_1$.
\end{lemma}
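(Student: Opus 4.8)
The strategy is to combine the two regimes already isolated in this section: an orbit segment that stays outside $I^{**}$ gains uniform exponential expansion by Lemma~\ref{m0}, while a ``return'' to $I^{**}$ at the final step costs at most a bounded multiplicative factor by Lemma~\ref{le:lowerboundret}. The only subtlety is that Lemma~\ref{m0} and Lemma~\ref{le:lowerboundret} as stated refer to $I$ and to $I^{**}$ respectively, and we must track the constants so that the final estimate \eqref{estfree} holds with a single rate $\Cr{kappa:mane1}$ and a single multiplicative constant $C^*$, valid for all $(a,b)$ in a common neighborhood $\caln$ and all sufficiently small $\delta_1$.

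First I would apply Lemma~\ref{m0} with the open interval $I=I^{**}=(c-\delta_1,c+\delta_1)$, obtaining a neighborhood $\caln$ of $(a_0,1)$, constants $\Cr{c:ce},\Cr{kappa:mane}>0$ and an integer $M_1$ (all depending on $\delta_1$ through $I$, but for $\delta_1$ small they are controlled by the unperturbed map) such that along $x,f_{a,b}(x),\dots,f_{a,b}^{q-1}(x)\notin I^{**}$ one has $(f_{a,b}^{q})'(x)>\Cr{c:ce}\,e^{\Cr{kappa:mane} q}$, and moreover $(f_{a,b}^{q})'(x)>e^{\Cr{kappa:mane} q}$ once $q\ge M_1$. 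This already gives the desired lower bound for the first $q-1$ iterates, i.e.\ $(f_{a,b}^{q-1})'(x)>\Cr{c:ce}\,e^{\Cr{kappa:mane}(q-1)}$.

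Next I would handle the last step $f_{a,b}^{q-1}(x)\mapsto f_{a,b}^{q}(x)\in I^{**}$. Write $(f_{a,b}^{q})'(x)=f_{a,b}'(f_{a,b}^{q-1}(x))\cdot (f_{a,b}^{q-1})'(x)$; the point $y:=f_{a,b}^{q-1}(x)$ lies \emph{outside} $I^{**}$ (it is $f_{a,b}^{q}(x)$ that lies inside), so $|y-c|\ge\delta_1$ and hence $f_{a,b}'(y)$ is bounded below by a fixed positive constant depending only on $\delta_1$ and the unperturbed map (using \eqref{secondb}, or simply the fact that the infimum of $f_{a,b}'$ over $(a,b)\in\caln$ and $|y-c|\ge\delta_1$ is positive, since $f_{a_0}$ has no critical point outside $c$). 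Combining, $(f_{a,b}^{q})'(x)\ge c_0\,\Cr{c:ce}\,e^{\Cr{kappa:mane}(q-1)}$ for a fixed $c_0>0$; setting $\Cr{kappa:mane1}:=\Cr{kappa:mane}$ and $C^*:=c_0\Cr{c:ce} e^{-\Cr{kappa:mane}}$ yields \eqref{estfree}. (Alternatively, one can invoke Lemma~\ref{le:lowerboundret} at $a=a_0$ to get $(f_{a_0}^{q})'(x)>\overline d/2$ directly and then perturb; but since \eqref{estfree} only needs a lower bound, the cleaner route is to keep the exponential from Lemma~\ref{m0} for the first $q-1$ steps and absorb the last step into the constant.)

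The main obstacle I anticipate is purely bookkeeping: making sure that the neighborhood $\caln$ and the constants produced by Lemma~\ref{m0} do not degenerate as $\delta_1\to 0$ in a way that would conflict with the quantifier order in the statement (``for sufficiently small $\delta_1$, if $(a,b)\in\caln$''). One resolves this by noting that the rate $\Cr{kappa:mane}$ in Lemma~\ref{m0} can be taken uniform for all small $\delta_1$ — shrinking $I^{**}$ only makes the ``escape'' hypothesis easier to satisfy, so the Ma\~n\'e/Misiurewicz exponent for $f_{a_0}$ relative to any small neighborhood of $c$ is bounded below by a fixed $\Cr{kappa:mane}>0$ — while the constant $\Cr{c:ce}$ and, more importantly, the lower bound $c_0$ on $f_{a,b}'$ on $\{|y-c|\ge\delta_1\}$ may deteriorate but only polynomially in $\delta_1$ (indeed $c_0\sim\Cr{c:lder}\delta_1^2$ by \eqref{secondb}); this is harmless since $C^*$ is allowed to depend on $\delta_1$. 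There is no real analytic difficulty beyond this.
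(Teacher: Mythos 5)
Your proposal is correct, but the route differs in a small but noticeable way from the paper's. The paper splits into cases: for $q\ge M_1$ it invokes Lemma~\ref{m0}(ii) to get $(f_{a,b}^q)'(x)\ge e^{\Cr{kappa:mane}q}$ outright, and for $q<M_1$ it invokes Lemma~\ref{le:lowerboundret} (at $a_0$, then perturbing to a neighborhood, which is legitimate since only finitely many iterates are at stake) to get $(f_{a,b}^q)'(x)>\overline d/2$, then shows $\overline d/2\ge C^*e^{\Cr{kappa:mane}q}$ with $C^*=(\overline d/2)e^{-\Cr{kappa:mane}M_1}$. You instead lean on Lemma~\ref{m0}(i). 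That is fine, and in fact you could streamline your argument further: the hypothesis of Lemma~\ref{m0}(i) with $n=q$ is precisely $x,f_{a,b}(x),\dots,f_{a,b}^{q-1}(x)\notin I$ — it places no constraint on $f_{a,b}^q(x)$ — so it applies directly to the situation of Lemma~\ref{lfree} with $I=I^{**}$, giving $(f_{a,b}^q)'(x)>\Cr{c:ce}e^{\Cr{kappa:mane}q}$ in one stroke. Your peeling off of the last iterate and absorbing $f_{a,b}'(f_{a,b}^{q-1}(x))\ge c_0>0$ into the constant is therefore unnecessary (though harmless). What the paper's route buys is a constant $C^*$ expressed through $\overline d$ and $M_1$ rather than through the black-box constant $\Cr{c:ce}$ of the Mañé estimate; what your (streamlined) route buys is brevity and fewer moving parts. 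Your final paragraph on the quantifier order ($\caln$ and the constants depending on $\delta_1$) correctly identifies the only bookkeeping subtlety, and resolves it in the same spirit the paper does.
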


\begin{proof}
By Lemma~\ref{m0}, $(f_{a,b}^{M_1})'(x)\ge e^{{\Cr{kappa:mane}}
  M_1}$. For a general $q$ write $q=kM_1+\ell$, $0\leq \ell <M_1$. Choose
$\Cr{kappa:mane1}'$ so that
$e^{\Cr{kappa:mane1}'M_1}\leq 2$.

Since $\ell<M_1$, then by Lemma~\ref{le:lowerboundret},
$(f_{a,b}^{\ell})'(f_{a,b}^{kM_1}(x))> \overline{d}/4$ (here we can extend the estimate to
a neighborhood of ${\mathcal N}_2$ of  $(a_0,1)$ because we consider
only finitely many iterates of the map). Then for $(a,b)\in {\mathcal
  N}= {\mathcal N}_1\cap {\mathcal N}_2$
\begin{align*}
(f_{a,b}^{q})'(x)&=(f_{a,b}^{kM_1})'(x)(f_{a.b}^\ell)'(f_{a,b}^{kM_1}(x))\geq e^{\kappa_2
             kM_1}\cdot \frac{\overline{d}}{4}\\
  &\geq  e^{\kappa_2kM_1}\frac{\overline{d}}{8}\cdot
    e^{\Cr{kappa:mane1}'M_1}
    \geq \frac{\overline{d}}{8} e^{\kappa_2kM_1+\Cr{kappa:mane1}'\ell}
\end{align*}
so \eqref{estfree} follows with $C^*=\overline{d}/8$
and $\Cr{kappa:mane1}=\min(\kappa_2,\Cr{kappa:mane1}')$. Note that
as required, $C^*$ only depends on the unperturbed map $f_{a_0}$, while
$\Cr{kappa:mane1}$ depends on $\delta_1$.

\end{proof}


\medskip
We will also need an estimate of the derivative during the bound
period.

\begin{lemma}\label{dercompBP}
Assume that $9\beta\le\tilde{\kappa}$. Let $C^*$ be the constant from
Lemma~\ref{lfree}. Then there is an arbitrarily small $\delta_1$ such that if $x$ is $\beta$-bound
to $c$ up to time $p$ for $f_{a_0}$ and $x\in
I^{**}=(c-\delta_1,c+\delta_1)$ then
\begin{equation}\label{pbounda0}
(f_{a_0}^p)'(x)>\frac1{C^*}e^{\frac{\tilde{\kappa}}{4}p}.
\end{equation}
\end{lemma}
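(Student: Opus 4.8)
The plan is to decompose the derivative $(f_{a_0}^p)'(x)$ along the bound orbit into a comparison factor and a reference factor, and then to estimate each separately. Writing $p_1 = \frac{1}{10}\log(1/\delta_1)$ as in Lemma~\ref{bounddistMT}, I would split the orbit segment $x, f_{a_0}(x),\dots,f_{a_0}^{p-1}(x)$ into the initial block of length $p_1$ (where the point may still be genuinely close to $c$, so the derivative can be as small as $\sim\delta_1^2$ per step by \eqref{secondb}) and the remaining block. For the initial block the chain rule gives a crude lower bound: each of the $p_1$ factors $f_{a_0}'(f_{a_0}^j(x))$ is at least, say, $\Cr{c:lder}|f_{a_0}^j(x)-c|^2$, and since the bound condition forces $|f_{a_0}^j(x)-c|$ to stay comparable to $|x-c|\,2^j$ (expansion by the leading linear-like behaviour away from $c$ combined with the definition of the bound period), the product over $j\le p_1$ is bounded below by something like $(\Cr{c:lder})^{p_1}|x-c|^{2p_1} 4^{p_1(p_1-1)/2}$, which for small $\delta_1$ is $\ge \delta_1^{3}$, say — a fixed negative power of $\delta_1$, hence of the form $e^{-\text{const}\cdot p_1}$.

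The main work is the second block, and here I would feed the orbit through Lemma~\ref{bounddistMT}: for all $k\le p$,
\[
(f_{a_0}^k)'(f_{a_0}(x)) \ge \frac{1}{\Cr{c:bp}} (f_{a_0}^k)'(f_{a_0}(c)) \ge \frac{\Cr{c:lfp}}{\Cr{c:bp}} e^{\tilde{\kappa} k},
\]
using \eqref{estBP1}. Taking $k = p-1$ this gives $(f_{a_0}^{p-1})'(f_{a_0}(x)) \ge (\Cr{c:lfp}/\Cr{c:bp}) e^{\tilde{\kappa}(p-1)}$. Then
\[
(f_{a_0}^p)'(x) = f_{a_0}'(x)\cdot (f_{a_0}^{p-1})'(f_{a_0}(x)) \ge f_{a_0}'(x)\cdot \frac{\Cr{c:lfp}}{\Cr{c:bp}} e^{\tilde{\kappa}(p-1)},
\]
and by \eqref{secondb}, $f_{a_0}'(x)\ge \Cr{c:lder}(x-c)^2$. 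Now I invoke Lemma~\ref{le:ce}: the bound period satisfies $p \le -\frac{4}{\tilde\kappa}\log|x-c|$, equivalently $|x-c| \ge e^{-\tilde\kappa p/4}$, so $(x-c)^2 \ge e^{-\tilde\kappa p/2}$. Combining,
\[
(f_{a_0}^p)'(x) \ge \Cr{c:lder}\,\frac{\Cr{c:lfp}}{\Cr{c:bp}}\, e^{-\tilde\kappa p/2}\, e^{\tilde{\kappa}(p-1)} = \Cr{c:lder}\,\frac{\Cr{c:lfp}}{\Cr{c:bp}}\,e^{-\tilde\kappa}\, e^{\tilde{\kappa} p/2}.
\]
Since $\tilde\kappa p/2 \ge \tilde\kappa p/4 + (\text{a positive multiple of }p)$, I can absorb the constant prefactor and the gap between $e^{\tilde\kappa p/2}$ and $e^{\tilde\kappa p/4}$, using that for small $\delta_1$ the relevant $|x-c|$ (hence $p$) is large, and conclude $(f_{a_0}^p)'(x) > \frac{1}{C^*} e^{\tilde\kappa p/4}$. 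The hypothesis $9\beta \le \tilde\kappa$ enters to guarantee that the bound-distortion estimate \eqref{estimate1MT} is applicable with the value of $\beta$ fixed earlier, and the phrase ``arbitrarily small $\delta_1$'' comes from the constraint in Lemma~\ref{dercompBP} that $\delta_1$ be chosen so that $\Cr{c:bp} < 2$ and that the initial-block loss is dominated.

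The step I expect to be the main obstacle is making the initial-block estimate clean: one must verify that the $p_1$ factors near $c$ do not cost more than a fixed power of $\delta_1$, i.e.\ that the orbit escapes a neighbourhood of $c$ at a definite exponential rate right from the start — this uses that the point is in $I^{**}\setminus\{c\}$, so $|x-c|>0$, together with the quantitative form of \eqref{thirdb}. Once that is in hand, the rest is bookkeeping with the constants $\Cr{c:lder},\Cr{c:lfp},\Cr{c:bp},C^*$ and the choice of $\delta_1$ small enough that $e^{\tilde\kappa p/2}$ dominates everything down to $\frac{1}{C^*}e^{\tilde\kappa p/4}$. Alternatively — and this may be the route the authors take — one can avoid splitting off $p_1$ altogether by writing $(f_{a_0}^p)'(x) = f_{a_0}'(x)\cdot(f_{a_0}^{p-1})'(f_{a_0}(x))$ from the outset, so that the only ``bad'' factor is the single term $f_{a_0}'(x) \ge \Cr{c:lder}(x-c)^2 \ge \Cr{c:lder} e^{-\tilde\kappa p/2}$, and the remaining $(f_{a_0}^{p-1})'(f_{a_0}(x))$ is controlled purely by Lemma~\ref{bounddistMT} and \eqref{estBP1}; this is in fact the cleaner argument and is what I would write up.
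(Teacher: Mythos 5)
The decomposition $(f_{a_0}^p)'(x)=f_{a_0}'(x)\cdot(f_{a_0}^{p-1})'(f_{a_0}(x))$, with the second factor controlled via Lemma~\ref{bounddistMT} and~\eqref{estBP1}, is the right first step and is what the paper does. But the route you say you would write up then breaks at the key point: Lemma~\ref{le:ce} gives $p<-\frac{4}{\tilde\kappa}\log|x-c|$, and solving for $|x-c|$ yields $|x-c|<e^{-\tilde\kappa p/4}$, an \emph{upper} bound on $|x-c|$ --- not the lower bound $|x-c|\ge e^{-\tilde\kappa p/4}$ you assert. (This direction is the natural one: a longer bound period forces $x$ to start \emph{closer} to $c$.) With only an upper bound on $|x-c|$ you cannot lower-bound the single bad factor $f_{a_0}'(x)\sim(x-c)^2$, so the argument collapses. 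A secondary remark on your first sketch: the orbit $f_{a_0}^j(x)$ for $1\le j\le p$ does not stay at distance comparable to $|x-c|\,2^j$ from $c$; it shadows $f_{a_0}^j(c)$, which for an MT parameter lies at distance $\ge\overline{d}$ from $c$, so by~\eqref{m0a} one has $|f_{a_0}^j(x)-c|\ge\overline{d}/2$ for all $j\ge1$. Only the one factor $f_{a_0}'(x)$ is small; there is no ``initial block'' of small factors.

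The missing ingredient is a \emph{lower} bound on $|x-c|$ in terms of $p$, and it comes from the escape condition that defines $p$. Since $p$ is maximal, $|f_{a_0}^{p+1}(x)-f_{a_0}^{p+1}(c)|>e^{-\beta(p+1)}$, hence (derivative bounded by $4$) $|f_{a_0}^p(x)-f_{a_0}^p(c)|\gtrsim e^{-\beta p}$. On the other hand, by the Mean Value Theorem combined with Lemma~\ref{bounddistMT}, \eqref{estBP1} and~\eqref{third}, $|f_{a_0}^p(x)-f_{a_0}^p(c)|\lesssim e^{\tilde\kappa p}|x-c|^3$. Comparing these gives $|x-c|^3\gtrsim e^{-(\beta+\tilde\kappa)p}$, so $(x-c)^2\gtrsim e^{-\frac23(\beta+\tilde\kappa)p}$ and $(f_{a_0}^p)'(x)\gtrsim e^{\tilde\kappa p}e^{-\frac23(\beta+\tilde\kappa)p}=e^{\frac13(\tilde\kappa-2\beta)p}$. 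This is where the hypothesis $9\beta\le\tilde\kappa$ really enters: it ensures $\frac13(\tilde\kappa-2\beta)\ge\frac{7}{27}\tilde\kappa>\frac{\tilde\kappa}{4}$, with the surplus $e^{\frac{\tilde\kappa}{108}p}$ available to absorb the accumulated constants once $\delta_1$ is small enough that $p$ is large. Your suggestion that $9\beta\le\tilde\kappa$ is needed to make Lemma~\ref{bounddistMT} applicable is incorrect --- that lemma holds for every $\beta>0$.
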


\begin{proof}
By the Mean Value Theorem, there is an $y$ between $f_{a_0}(x)$ and
$f_{a_0}(c)$, such that
\[
|f_{a_0}^p(x)-f_{a_0}^p(c)|=(f_{a_0}^{p-1})'(y)|f_{a_0}(x)-f_{a_0}(c)|.
\]
By this, Lemma~\ref{bounddistMT} and~\eqref{third}, there exists a
constant $\Cl[K-const]{k:x3-term}$ such that if $\delta_1$ is sufficiently small then
\[
|f_{a_0}^p(x)-f_{a_0}^p(c)|<\Cr{k:x3-term}e^{\tilde{\kappa} p}|x-c|^3.
\]
Similarly, since $(f_{a_0}^p)'(x)=f_{a_0}'(x)\cdot(f_{a_0}^{p-1})'
(f_{a_0}(x))$, we get by Lemma~\ref{bounddistMT} and~\eqref{second}
that there exists a constant $\Cl[K-const]{k:lowerbp}$ such that if $\delta_1$ is
sufficiently small then
\[
(f_{a_0}^p)'(x)>\Cr{k:lowerbp}e^{\tilde{\kappa} p}|x-c|^2.
\]

By the definition of $p$ we have
\[
|f_{a_0}^{p+1}(x)-f_{a_0}^{p+1}(c)|>e^{-\beta (p+1)},
\]
and therefore for some constant $\Cl[K-const]{k:lowerbp1}$
\[
|f_{a_0}^p(x)-f_{a_0}^p(c)|>\Cr{k:lowerbp1}e^{-\beta p}.
\]
Thus,
\[
\Cr{k:x3-term}e^{\tilde{\kappa} p}|x-c|^3>\Cr{k:lowerbp1}e^{-\beta p},
\]
so
\[
|x-c|^2>\Cr{k:lowerbp1}^{2/3}\Cr{k:x3-term}^{-2/3}e^{(2/3)(-\beta-\tilde{\kappa})p}.
\]
Together with an earlier estimate, this gives us
\[
(f_{a_0}^p)'(x)>\Cr{k:lowerbp}e^{\tilde{\kappa} p}\Cr{k:lowerbp1}^{2/3}\Cr{k:x3-term}^{-2/3}
e^{(2/3)(-\beta-\tilde{\kappa})p} =\Cr{k:lowerbp}\Cr{k:lowerbp1}^{2/3}\Cr{k:x3-term}^{-2/3}
e^{(1/3)(\tilde{\kappa}-2\beta)p}.
\]
Since $9\beta\le\tilde{\kappa}$, we have
\[
\frac13(\tilde{\kappa}-2\beta)>\frac7{27}\tilde{\kappa},
\]
and therefore~\eqref{pbounda0} holds if
\begin{equation}\label{eq45}
C^*>\Cr{k:lowerbp}^{-1}\Cr{k:lowerbp1}^{-2/3}\Cr{k:x3-term}^{2/3}e^{-\frac{\tilde{\kappa}}{108}p(\delta_1)},
\end{equation}
where $p(\delta_1)$ is the bound period associated with $\delta_1$.
Since $p(\delta_1)\to\infty$ as $\delta_1\to 0$, and $C^*$ is
independent of $\delta_1$, the above inequality holds if
$\delta_1$ is sufficiently small.
\end{proof}

\begin{remark}\label{rem45}
If in~\eqref{pbounda0} we replace (on both sides of the inequality) $p$
by $p-1$ or $p+1$, then at the right-hand side of in~\eqref{eq45}
there will be one more multiplicative constant. Since it was irrelevant
in the proof what constant is there, Lemma~\ref{dercompBP} still holds
with a suitably modified inequality~\eqref{pbounda0}.
\end{remark}

The following lemma is very similar to Lemma~\ref{m0}, but the
exponent in the estimate does not depend on the size of the
neighborhood of $c$ that we consider. In this lemma we
assume that $\delta_1$ is sufficiently small (so that the lemmas that
we use in the proof hold) but fixed.

\begin{lemma}\label{mane}
Let $I$ be an open symmetric interval around $c$, whose closure is contained
in $I^{**}$. Fix a sufficiently small neighborhood $\caln$ of
$(a_0,1)$ (depending on $I$). Then there are constants $\Cl{c:celb}>0$ and
$\Cl[Kappa-const]{kappa:outside}>0$,  (independent of $I$) and an integer $M$ (depending on
$I$) such that for $(a,b)\in\caln$
\begin{itemize}
\item[(i)] if $x,f_{a,b}(x),\dots, f_{a,b}^{n-1}(x) \notin I$ and
$f_{a,b}^n(x) \in I^{**}$, then
\[
(f_{a,b}^n)'(x)\geq \Cr{c:celb} e^{\Cr{kappa:outside} n};
\]
\item[(ii)] if $x,f_{a,b}(x),\dots, f_{a,b}^{n-1}(x) \notin I$ and
$n\geq M$, then
\[
(f_{a,b}^n)'(x)\geq e^{\Cr{kappa:outside} n}.
\]
\end{itemize}
\end{lemma}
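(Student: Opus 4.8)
The plan is to bootstrap from Lemma~\ref{m0}, whose only defect is that its exponent $\Cr{kappa:mane}$ depends on the interval $I$ through the neighborhood $\caln$; here we want an exponent $\Cr{kappa:outside}$ that is uniform in $I$. The natural uniform rate is the one coming from the repelling periodic orbit of the MT map, namely something comparable to $\tilde{\kappa}=(1/\ell)\log\Lambda$, and indeed we should be able to take $\Cr{kappa:outside}$ to be a fixed fraction of $\tilde{\kappa}$ (or of $\Cr{kappa:mane1}$ from Lemma~\ref{lfree}), which is exactly the kind of constant $\beta$ was chosen to be smaller than. First I would fix $\delta_1$ small enough that Lemmas~\ref{bounddistMT}, \ref{le:ce}, \ref{lfree} and \ref{dercompBP} all hold, and then, given $I$ with $\overline{I}\subset I^{**}$, shrink $\caln$ so that Lemma~\ref{m0} applies to this $I$.

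The key idea for part~(i) is to decompose an orbit segment $x, f_{a,b}(x),\dots, f_{a,b}^{n-1}(x)\notin I$ with $f_{a,b}^n(x)\in I$ according to its visits to the \emph{larger} interval $I^{**}$. Between consecutive visits to $I^{**}$ the orbit stays outside $I^{**}\supset I$, so Lemma~\ref{m0} (or Lemma~\ref{lfree}, which is tailored to exactly this "free period between returns to $I^{**}$" situation) gives exponential growth at rate $\Cr{kappa:mane1}$ on each such stretch, with a uniform multiplicative constant $C^*$. Each time the orbit does enter $I^{**}$ but not $I$, the point lies in $I^{**}\setminus I$, hence at definite distance $\ge\operatorname{dist}(\partial I,c)$ from $c$; on such a point the derivative $f_{a,b}'$ is bounded below by a positive constant (via \eqref{secondb}), so a single such step costs at most a bounded factor — but here is the subtlety: the number of such "shallow" returns to $I^{**}$ is not a priori controlled, so I cannot simply absorb them. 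The cleaner route is to only cut at returns to $I$: iterate Lemma~\ref{lfree} is wrong because that lemma is about $I^{**}$, not $I$. So instead I would apply Lemma~\ref{m0}(i) directly once to the whole segment to get $(f_{a,b}^n)'(x)>\Cr{c:ce}e^{\Cr{kappa:mane}n}$, and then separately argue that $\Cr{kappa:mane}$ can be replaced by the uniform $\Cr{kappa:outside}$ by a compactness/finite-combinatorics argument: only finitely many combinatorial types of excursions from $I$ have length below the threshold at which the periodic-orbit rate $\tilde{\kappa}$ dominates, and long excursions from $I$ that also avoid $I^{**}$ are governed by the uniform Lemma~\ref{m0}(ii) rate, while long excursions from $I$ that do dip into $I^{**}\setminus I$ pick up bound-period-type lower bounds from Lemma~\ref{dercompBP} (remembering a point of $I^{**}\setminus I$ is bound to $c$ for a bounded time $p$, with $(f_{a_0}^p)'\ge (C^*)^{-1}e^{\tilde\kappa p/4}$). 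Concatenating these, the overall rate is bounded below by $\min$ of these fixed rates, which is independent of $I$.

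Concretely the steps are: (1) fix $\delta_1$; (2) given $I$, choose $\caln$ so Lemma~\ref{m0} holds for $I$; (3) for an excursion of length $n$ from $I$, split the time interval $\{0,\dots,n-1\}$ at the returns to $I^{**}$, obtaining blocks that are entirely outside $I^{**}$ (handled by Lemma~\ref{lfree}, rate $\Cr{kappa:mane1}$, constant $C^*$) separated by single visits to $I^{**}\setminus I$ (handled by Lemma~\ref{dercompBP} and Remark~\ref{rem45}, which give growth $\ge (C^*)^{-1}e^{\tilde\kappa p/4}$ over a bound block of length $p$ attached to such a visit, the point being at distance $\ge\operatorname{dist}(\partial I,c)$ from $c$ so $p$ is bounded and the constant is uniform); (4) multiply: the $C^*$ and $(C^*)^{-1}$ factors telescope up to a single bounded constant, and the exponential rates combine to a uniform $\Cr{kappa:outside}=\tfrac12\min(\Cr{kappa:mane1},\tilde\kappa/4)$, say; (5) this gives~(i) with some $\Cr{c:celb}>0$; (6) deduce~(ii) exactly as in the proof of Lemma~\ref{m0}, taking $M>-\log\Cr{c:celb}/\Cr{kappa:outside}$, since an orbit segment of length $n\ge M$ avoiding $I$ either never returns to $I$ (then extend it by one step into $I$, or just note the same product estimate applies without the final return) giving $(f_{a,b}^n)'(x)\ge\Cr{c:celb}e^{\Cr{kappa:outside}n}\ge e^{\Cr{kappa:outside}n}$.

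The main obstacle is the bookkeeping in step~(3)–(4): making sure that the visits to $I^{**}\setminus I$ — of which there can be many, since $I$ is a proper subinterval of $I^{**}$ — do not each cost a factor $<1$ that degrades the rate. The resolution is that a point in $I^{**}\setminus I$ is at distance bounded below from $c$ (by a constant depending on $I$, hence on $M$ but \emph{not} on the uniform rate), so it is bound to $c$ for only a bounded number of steps and, crucially, on the bound block Lemma~\ref{dercompBP} gives genuine exponential growth at rate $\tilde\kappa/4>0$ rather than mere boundedness; thus every block — free or bound — contributes positively to the exponent, and the constants are all either $C^*$, $(C^*)^{-1}$, or the single finite-combinatorics constant, all independent of $I$. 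I expect the estimate \eqref{secondb} together with Lemma~\ref{le:ce}'s bound $p\le 4r/\tilde\kappa$ to be exactly what pins down these uniform constants.
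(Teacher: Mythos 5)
Your ``concretely'' paragraph is precisely the paper's argument: cut the orbit at the visits to $I^{**}\setminus I$, estimate the free blocks by Lemma~\ref{lfree} (constant $C^*$, rate $\Cr{kappa:mane1}$) and the bound blocks by Lemma~\ref{dercompBP} together with Remark~\ref{rem45} (constant $1/C^*$, rate $\tilde{\kappa}/4$), observe that the $C^*$ and $1/C^*$ factors cancel in pairs so only a single surviving $C^*$ remains, and take the exponent $\min(\Cr{kappa:mane1},\tilde{\kappa}/4)$, which is independent of $I$. Your ``compactness/finite-combinatorics'' alternative floated in the middle paragraph is an unnecessary detour (and would not directly yield a uniform rate), and in part~(ii) the paper does a bit more than your step~(6): it substitutes Lemma~\ref{m0}(ii) for the final application of Lemma~\ref{lfree} (since the orbit need not return to $I^{**}$ at time $n$) and separately argues, via the Claim at the end of the proof, that time $n$ cannot fall inside an unfinished bound period --- an endpoint case your sketch glosses over.
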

{\it Remark.} Note that we state (i)   with the weaker assumption
$f_{a,b}^n(x) \in I^{**}$ instead of the more natural $f_{a,b}^n(x)
\in I$. This slightly stronger statement will be used in the proof of (ii).

\begin{proof}  Let $0=t_0<t_1<t_2<\dots<t_S<t_{S+1}=n$, where $t_i$ for
$i\in\{1,2,\dots,S\}$ are the times when $f_{a,b}^{t_i}(x)\in
I^{**}\setminus I$. We want to estimate
\[
(f_{a,b}^n)'(x)=\prod_{j=0}^S (f_{a,b}^{t_{j+1}-t_j})'(f_{a,b}^{t_j}(x)).
\]
The times from $[t_0,t_1)$ form a free period; let $t_1-t_0=q_0$.
Hence, by Lemma~\ref{lfree},
\[
(f_{a,b}^{q_0})'(x)\geq C^*e^{\Cr{kappa:mane1} q_0}.
\]
Consider now times from $[t_j,t_{j+1})$, where $j>0$. We can write it
as a union of a bound period $[t_j,t_j+p_j)$  and a free period
$[t_j+p_j,t_{j+1})$, and we write its length as $t_{j+1}-t_j=p_j+q_j$.
For the bound periods $[t_j,t_j+p_j)$ we can use the estimate from
Lemma~\ref{dercompBP} if $\caln$ is sufficiently small, because by
Lemma~\ref{le:ce} we work only with the finite number of iterates (for
$I$ fixed; this is why $\caln$ depends on $I$). Although $p$
 may depend on the map that we are using, if $\caln$ is sufficiently
 small, it only may change to $p\pm 1$, and then by Remark~\ref{rem45}
 we can still use Lemma~\ref{dercompBP}.

Thus, for the bound periods $[t_j,t_j+p_j)$ we get
\begin{equation}\label{eqbound}
(f_{a,b}^{p_j})'(f_{a,b}^{t_j}(x))\geq \frac1{C^*}e^{\frac{\tilde{\kappa}}{4} p_j}
\end{equation}

and for the free period, as before, the estimate from
Lemma~\ref{lfree} gives us
\[
(f_{a,b}^{q_j})'(f_{a,b}^{t_j+p_j}(x))\geq C^*e^{\Cr{kappa:mane1} q_j}.
\]
Combining these estimates we get
\[
(f_{a,b}^n)'(x)\geq C^*e^{\Cr{kappa:mane1}q_0}\prod_{j=1}^S\frac1{C^*}
e^{\frac{\tilde{\kappa}}{4} p_j }\cdot C^*e^{\Cr{kappa:mane1} q_j}\geq C^*e^{\Cr{kappa:outside}'
n},
\]
with $\Cr{kappa:outside}'=\min(\Cr{kappa:mane1}, \tilde{\kappa}/4)$. This completes
the proof of (i).
\end{proof}

Under the assumptions of (ii) instead of (i) we make the same
construction and estimates. The only difference is that we do not know
that $f_{a,b}^n(x)\in I^{**}$, so we lose information about the last
period. There are two cases.

{\it Case 1.} $f_{a,b}^n(x)$ is still in bound state to the last
return to $I^{**}$  at time $t_S$. At time $t_S$ we can use the
estimate of (i)
$$
(f^{t_S}_{a,b})'(x)\geq C_{11}e^{\Cr{kappa:outside}'t_S}.
$$
The derivative contribution at time $t_S$ is
$$
f_{a,b}'(f^{t_S}_{a,b}(x))\geq \Cr{c:lder}'\delta^2
$$
Then there is a derivative contribution from the time
$[t_{S}+1,t_S+j]$, $j=n-t_S$. Since $1\leq j\leq p_S$ we can use 
the Collet-Eckmann condition \eqref{ce-est} and the distorsion estimate
Lemma \ref{bounddistMT}, combined with continuity in $a$ for  $a\in
{\mathcal A}$, and the fact that $p_S$ is bounded to conclude that, say
$$
(f_{a,b}^j)'(f_{a,b}^{t_{S}+1}(x))\geq \frac{1}{2} C_{\text{\rm
    CE}}e^{\Cr{kappa:ce} j}.
$$
Combining these estimates and using the the chain rule we get  that
$$
(f^n_{a,b})'(x)\geq\Cr{c:celb} e^{\Cr{kappa:outside} t_S}\cdot
\Cr{c:lder}'\delta^2\cdot
 \frac{1}{2}C_{\text{\rm CE}}
e^{\frac{\tilde{\kappa}}{4}j} 
$$
and since $n\geq M$, where $M$  is allowed to depend on $\delta$ this
gives the estimate (ii) with a suitable $ \Cr{kappa:outside}''<\Cr{kappa:outside}'$.

{\it Case 2.} $n\geq t_S+p_S$. In this case we can use \eqref{eqbound}
with $j=S$ to obtain
\begin{equation}
(f_{a,b}^{p_S})'(f_{a,b}^{t_j}(x))\geq \frac1{C^*}e^{\frac{\tilde{\kappa}}{4} p_S}
\end{equation}

Then

$$
(f^n_{a,b})'(x)\geq  (f^{t_S}_{a,b})'(x)(f^{p_S}_{a,b})'(f^{t_S}_{a,b}(x)) ((f^{q_S}_{a,b})'(f^{t_S+p_S}_{a,b}(x)),
$$
where $q_S=n-(t_S+p_S)$. We get using Lemma \ref{m0} and  the simple
estimate $(f_{a,b})'(x)\geq \Cr{c:lder}'\delta_1^2$ for $|x-c|\geq
\delta_1$ that  
$$
  (f^{q_S}_{a,b})'(f^{t_S+p_S}_{a,b}(x))\geq 
  \begin{cases} e^{\Cr{kappa:mane}q},\qquad\qquad\ q_S\geq M_1\\
    (\Cr{c:lder}'\delta_1^2)^{q_S},\qquad q_S< M_1.
\end{cases}
$$
Using that the constants $\delta_1$, $\Cr{c:lder}'$,
$\Cr{kappa:mane}$ and $M_1$
only depend on $f_{a_0}$, we conclude that (ii) holds with $\Cr{kappa:outside}=\Cr{kappa:outside}'''$ for $n\geq M$, if
$M$ is sufficienty large. The final $\Cr{kappa:outside}$ is then
chosen as $\Cr{kappa:outside}=\min(\Cr{kappa:outside}',\Cr{kappa:outside}'',\Cr{kappa:outside}''')$.


\begin{remark}\label{p-est-2} Note that we in this setting will have an
  analogy of Lemma \ref{le:ce} and the estimate 
  \begin{equation}\label{eq:p-est-2}
p\leq \frac {4r}{\kappa_4}
\end{equation}
holds.

\end{remark}
\medskip




\begin{remark}\label{rem:outsidedist}
We need in the future in several occasions a distorsion estimate in
the situation of Lemma \ref{mane}, i.e. for orbits located outside of
$I$. We need the estimate for parameter dynamics, i.e we have a
parameter interval $\omega$ in the space of $a$-parameters, and we
consider $\xi_j(\omega,b)$ for $j$ satisfying $\nu\leq j<\mu=n$, where
$\xi_j(\omega,b)\cap I=\emptyset$ for $j=\nu,\dots,n-1$ and
$\xi_n(\omega,b)\cap I\neq \emptyset$. Let $\omega'\subset \omega$ be the
interval that is mapped onto $I$. Then Lemma \ref{mane} (i) implies that
\begin{equation}\label{out:exp}
\inf_{a\in\omega'}(f_{a,b}^{n-\nu})'(f_{a,b}^\nu(c))
\geq \Cr{c:celb}e^{\Cr{kappa:outside}(n-\nu)}
\end{equation}
We also assume that \eqref{eq:lower} hold at time $\nu$ i.e. for $a\in\omega$
\begin{equation}\label{eq:stretched}
(f^j_{a,b})'(f_{a,b}(c))\geq \Cr{c:celbstretched}e^{j^{2/3}},\qquad 1\ \leq j\leq \nu-1
\end{equation}
Then by Corollary \ref{dera_cor}
\begin{equation}\label{deraceq1}
1\leq\frac{\partial_a\xi_\nu(a,b)}{(f_{a,b}^{\nu-1})'(f_{a,b}(c))}\le q_*.
\end{equation}

Then we conclude from Lemma \ref{le:growth} that 

\begin{equation}\label{eq:expgrowth}
|\xi_{n}(\omega',b)|\geq \frac1{q*}\inf_{a\in\omega'}
(f_{a,b}^{n-\nu})'(f_{a,b}^\nu(c))\cdot|\xi_\nu(\omega',b)|.
\end{equation}

\end{remark}

\begin{lemma}\label{le:outside}
There exists a constant $\Cl{c:freedist}$, such that  in the situation of
Remark~\ref{rem:outsidedist}, if $a',a''\in\omega'$ then
\begin{equation}\label{freed}
\frac{(f_{a',b}^{n-\nu})'(f^\nu_{a',b}(c))}{(f_{a'',b}^{n-\nu})'
(f^\nu_{a'',b}(c))}\leq\exp\left(\Cr{c:freedist}\frac{|f^{n}_{a',b}(c)
-f^{n}_{a'',b}(c)|}{\delta}\right).
\end{equation}
\end{lemma}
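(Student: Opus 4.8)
The plan is to prove the distortion bound by the usual telescoping argument, exactly as in the proof of Lemma~\ref{bounddistMT}, but now summing the contributions of \emph{free} steps and \emph{bound} steps separately, and converting everything into a statement about the distance between the two orbits at the \emph{final} time $n$ (rather than at the intermediate times), using the exponential expansion supplied by Lemma~\ref{mane}. First I would write
\[
\log\frac{(f_{a',b}^{n-\nu})'(f^\nu_{a',b}(c))}{(f_{a'',b}^{n-\nu})'(f^\nu_{a'',b}(c))}
=\sum_{j=\nu}^{n-1}\log\frac{f_{a',b}'(f^{j}_{a',b}(c))}{f_{a'',b}'(f^{j}_{a'',b}(c))},
\]
and bound each term by $C$ times $|f^{j}_{a',b}(c)-f^{j}_{a'',b}(c)|+|a'-a''|$, using that on the complement of $I$ the derivative $f_{a,b}'$ is bounded below away from $0$ (so we may take logarithms safely) and that $f_{a,b}'$ is Lipschitz in both $x$ and $a$ with a uniform constant by~\eqref{derxbd}. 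Since $a',a''\in\omega'$ and $\omega'$ is mapped \emph{onto} $I$ at time $n$, one has $|a'-a''|\le|\partial_a\xi_n|^{-1}$-type control; more simply, $|a'-a''|$ is dominated by $|f^n_{a',b}(c)-f^n_{a'',b}(c)|$ up to the expansion factor, so this term can be absorbed.

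The key point is then to control $\sum_{j=\nu}^{n-1}|f^{j}_{a',b}(c)-f^{j}_{a'',b}(c)|$ by a constant multiple of $|f^{n}_{a',b}(c)-f^{n}_{a'',b}(c)|/\delta$. For this I would run the orbit backwards from time $n$: during a free period, Lemma~\ref{mane}(i) (more precisely the expansion estimate~\eqref{out:exp}, applied on sub-blocks) gives that distances contract at a uniform exponential rate as we go backwards, so the free-period contributions form a geometric series dominated by the final distance $|f^{n}_{a',b}(c)-f^{n}_{a'',b}(c)|$ times a fixed constant. During a bound period $[t_j,t_j+p_j)$, the two orbits are each $\beta$-close to the critical orbit, so by the Bound Distortion Lemma~\ref{bounddistMT} the distances $|f^{t_j+i}_{a',b}(c)-f^{t_j+i}_{a'',b}(c)|$ are comparable to $(f^{i}_{a_0})'(\cdot)$ times $|f^{t_j}_{a',b}(c)-f^{t_j}_{a'',b}(c)|$, and since $(f^{i}_{a_0})'(f_{a_0}(c))\ge \Cr{c:lfp}e^{\tilde\kappa i}$ grows exponentially in $i$, summing over the bound period only inflates the entry distance $|f^{t_j}_{a',b}(c)-f^{t_j}_{a'',b}(c)|$ by a fixed constant compared with the exit distance $|f^{t_j+p_j}_{a',b}(c)-f^{t_j+p_j}_{a'',b}(c)|$. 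Thus each block (free or bound) contributes, backwards, a geometrically decaying amount, and the whole sum telescopes into $C\,|f^{n}_{a',b}(c)-f^{n}_{a'',b}(c)|$. The factor $1/\delta$ appears because the comparisons involving the derivative lower bound on $\mathbb{T}\setminus I$ carry a $1/\mathrm{dist}(\cdot,c)\le 1/\delta$ in the Lipschitz estimate for $\log f_{a,b}'$ near $\partial I$; alternatively one absorbs it at the single last step where $f^n(c)$ enters $I$ and $f_{a,b}'$ may be as small as a constant multiple of $\delta^2$, which is why the normalization is by $\delta$ rather than by $1$.

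I expect the main obstacle to be the bookkeeping at the \emph{bound} periods: one must check that the good distortion control of Lemma~\ref{bounddistMT}, which is stated for a point $x$ that is $\beta$-bound to $c$, can legitimately be applied to \emph{both} $f^{t_j}_{a',b}(c)$ and $f^{t_j}_{a'',b}(c)$ with the \emph{same} bound period length up to $\pm1$ (this is where one invokes that $\caln$ is small and that, as in the proof of Lemma~\ref{mane}, the bound period changes by at most one under such perturbations, cf.\ Remark~\ref{rem45}), and that the small discrepancy between the $b$-map and the $a_0$-map does not destroy the geometric estimates. Once that is in hand, the free periods are handled verbatim by the expansion of Lemma~\ref{mane} and the argument already used in Remark~\ref{rem:outsidedist}, and combining the two types of contributions gives~\eqref{freed} with a constant $\Cr{c:freedist}$ depending only on $f_{a_0}$.
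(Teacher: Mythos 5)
Your proposal follows the same structural outline as the paper's proof: telescope the log-ratio of derivatives, bound each single-step term by the nonlinearity $|f''_{a',b}|/f'_{a',b}$ (uniformly $\le 80/(\Cr{c:lder}\delta)$ on $\mathbb{T}\setminus I^*$, by~\eqref{secondb} and~\eqref{sec_der}) times the orbit separation $|x_k-y_k|$, and sum the separations by exponential backward contraction from time $n$. Where it diverges---and does unnecessary work---is the bound-period bookkeeping. In the situation of Remark~\ref{rem:outsidedist} the orbit $\xi_j(\omega',b)$ stays outside $I^*$ for every $j\in[\nu,n-1]$; there are no returns to $I^*$, only possibly to $I^{**}\setminus I^*$. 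Consequently the nonlinearity bound holds at every step with no case distinction, and the backward contraction of separations follows directly from the expansion estimate of Lemma~\ref{mane}(i), applied from each intermediate starting time $\nu+k$, together with Lemma~\ref{le:growth} and the transversality control~\eqref{deraceq1}; this gives
\[
|x_k-y_k|\le\frac{q_*}{\Cr{c:celb}}e^{-\Cr{kappa:outside}(n-\nu-k)}|x_{n-\nu}-y_{n-\nu}|,
\]
whose sum is a geometric series. Lemma~\ref{mane} already handles internally the $I^{**}$-bound/free decomposition you propose to redo; splitting off the bound periods and invoking Lemma~\ref{bounddistMT} gains nothing, and forces you to worry about transferring estimates from $f_{a_0}$ to $f_{a,b}$, a complication the direct appeal to Lemma~\ref{mane} sidesteps.

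Two small corrections. First, the $|a'-a''|$ term you insert into the single-step Lipschitz estimate does not arise: $f'_{a,b}(x)=2+2b\cos 2\pi x$ is independent of $a$, so the single-step log-ratio depends on the positions $x_k$, $y_k$ alone, and the paper's proof exploits this. Second, your alternative explanation of the $1/\delta$ normalisation (absorbing it ``at the single last step where $f^n(c)$ enters $I$'') is off: the $1/\delta$ enters through the nonlinearity bound at every one of the $n-\nu$ steps (all with argument outside $I^*$), and the sum stays finite because of the geometric backward decay of the separations, not because of one compensation at the return time.
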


\begin{proof}
Set $x_k=f^{\nu+k}_{a',b}(c)$ and $y_k=f^{\nu+k}_{a'',b}(c)$. Note
that $f_{a,b}'(x)$ is independent of $a$. Therefore
\begin{align*}
\log\frac{(f_{a',b}^{n-\nu})'(x_0)}{(f_{a'',b}^{n-\nu})'(y_0)}
& = \sum_{k=0}^{n-\nu-1}(\log f'_{a',b}(x_k)- \log f'_{a',b}(y_k))\\
& \leq \sum_{k=0}^{n-\nu-1}\left(\frac{|f_{a',b}''(\eta_k)|}
{f_{a',b}'(\eta_k)}\cdot |x_k-y_k| \right)
\end{align*}
for some $\eta_k$ between $x_k$ and $y_k$.
Since $\eta_k\notin I^*$ for $k=0,\dots,n-\nu-1$, we get,
by~\eqref{secondb} and~\eqref{sec_der},
\begin{equation}\label{eq:nonlinearity}
\frac{|f_{a',b}''(\eta_k)|}{f_{a',b}'(\eta_k)}<
\frac{80}{\Cr{c:lder}\delta}.
\end{equation}
Therefore,
\begin{equation}\label{out1}
\log\frac{(f_{a',b}^{n-\nu})'(x_0)}{(f_{a'',b}^{n-\nu})'(y_0)}
\le\frac{80}{\Cr{c:lder}\delta}\sum_{k=0}^{n-\nu-1}|x_k-y_k|.
\end{equation}

We have $x_k=\xi_{\nu+k}(a',b)$ and $y_k=\xi_{\nu+k}(a'',b)$.
Therefore, by Remark~\ref{rem:outsidedist},
\[
|x_k-y_k|\le\frac{q_*}{\Cr{c:celb}} e^{\Cr{kappa:outside}(n-\nu-k)}
|x_{n-\nu}-y_{n-\nu}|.
\]
Thus,
\[
\sum_{k=0}^{n-\nu-1}|x_k-y_k|\le \frac{q_*}{\Cr{c:celb}} \sum_{m=0}^\infty
e^{-\Cr{kappa:outside} m}|x_{n-\nu}-y_{n-\nu}|=\frac{q_*}{\Cr{c:celb}(1-e^{-\Cr{kappa:outside}})}
|x_{n-\nu}-y_{n-\nu}|.
\]
Together with~\eqref{out1}, we get
\[
\log\frac{(f_{a',b}^{n-\nu})'(x_0)}{(f_{a'',b}^{n-\nu})'(y_0)}
\le\frac{80 q_*}{\Cr{c:lder} \Cr{c:celb}(1-e^{-\Cr{kappa:outside}})\delta}|x_{n-\nu}
-y_{n-\nu}|.
\]
and we have proved~\eqref{freed} with
\[
\Cr{c:freedist}=\frac{160 q_*}{\Cr{c:lder} \Cr{c:celb}(1-e^{-\Cr{kappa:outside}})}.
\]
\end{proof}

\begin {remark}\label{rem:out}
We note that the distortion in Lemma~\ref{le:outside} is uniformly
bounded since $|f^{n}_{a,b}(c)-f^{n}_{a',b}(c)|\leq 2\delta$.
\end{remark}

We will need a distorsion estimate
of the same type as Lemma \ref{le:outside} in the situation when we
only assume estimates as \eqref{out:exp} for all $\nu<n$  and
with another Lyapunov exponent $\Cl[Kappa-const]{kappa:free}>0$, together with
\eqref{eq:stretched}. This is the case of {\em hyperbolic times}
in the sense of Alves.

\begin{lemma}\label{le:hyptimedist}

 Assume that $\xi_j(\omega,b)$, $j=\nu,\dots,n$, is located
in $U=S^1\setminus I^{**}$ and 
\begin{equation}\label{out:exp1}
\inf_{a\in\omega}(f_{a,b}^{n-j})'(f_{a,b}^j(c))
\geq \Cr{c:celb}e^{\Cr{kappa:free}(n-j)}\quad \text{for all}\ j,\ \nu\leq j <n.
\end{equation}
Furthermore assume that \eqref{eq:stretched} is satisfied.

Then 

\begin{equation}\label{freed1}
\frac{(f_{a',b}^{n-\nu})'(f^\nu_{a',b}(c))}{(f_{a'',b}^{n-\nu})'
(f^\nu_{a'',b}(c))}\leq\exp\left(\Cl{c:alvesdist}|f^{n}_{a',b}(c)
-f^{n}_{a'',b}(c)|\right).
\end{equation}
Here the constant $\Cr{c:alvesdist}$ can be chosen as
$\Cr{c:alvesdist}={\Cr{c:alvesdist}'}N(f_{\mathcal N},U)/(1-e^{-\Cr{kappa:free}})$, where
$N(f_{\mathcal N},U)$ is the maximal nonlinearity

$$
N(f_{\mathcal N},U)=\sup_{(a,b)\in{\mathcal N}} \max_{x\in U} \frac{|f_{a,b}''(x)|}{f'_{a,b}(x)}.
$$

$N(f_{\mathcal N},U)$ depends only on
$f_{a_0}$ and hence not on $\delta$ and $\Cr{c:alvesdist}'$ is a constant that
only depends on $f_{a_0}$.
\end{lemma}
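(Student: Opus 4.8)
The plan is to mimic the proof of Lemma~\ref{le:outside}, but replace the consequence of Lemma~\ref{mane}(i) with the hypothesis \eqref{out:exp1} on hyperbolic times, and to carry the constant $N(f_{\mathcal N},U)$ in place of the explicit bound $80/(\Cr{c:lder}\delta)$ coming from \eqref{eq:nonlinearity}. First I would set $x_k=f^{\nu+k}_{a',b}(c)$ and $y_k=f^{\nu+k}_{a'',b}(c)$ for $k=0,\dots,n-\nu$, and as before use that $f'_{a,b}(x)$ does not depend on $a$, so that
\[
\log\frac{(f_{a',b}^{n-\nu})'(x_0)}{(f_{a'',b}^{n-\nu})'(y_0)}
=\sum_{k=0}^{n-\nu-1}\bigl(\log f'_{a',b}(x_k)-\log f'_{a',b}(y_k)\bigr)
\le\sum_{k=0}^{n-\nu-1}\frac{|f''_{a',b}(\eta_k)|}{f'_{a',b}(\eta_k)}\,|x_k-y_k|
\]
for suitable $\eta_k$ between $x_k$ and $y_k$. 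Since all the $x_k,y_k$ lie in $U=S^1\setminus I^{**}$, and $\eta_k$ lies between them, $\eta_k\in U$ as well (here one should note $U$ is an interval on the circle, or at worst invoke that $I^{**}$ is a short symmetric interval so the segment between two points of $U$ close together stays in $U$); hence each factor $|f''_{a',b}(\eta_k)|/f'_{a',b}(\eta_k)$ is bounded by $N(f_{\mathcal N},U)$, which by its definition depends only on $f_{a_0}$ and not on $\delta$.

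Next I would control $\sum_{k}|x_k-y_k|$ by the single quantity $|x_{n-\nu}-y_{n-\nu}|=|f^n_{a',b}(c)-f^n_{a'',b}(c)|$ using the backward contraction implicit in \eqref{out:exp1}. Indeed, writing $|x_{n-\nu}-y_{n-\nu}|$ as the image of the interval $\xi_{\nu+k}(\omega,b)\ni x_k,y_k$ under $f^{n-\nu-k}_{a,b}$ and applying the Mean Value Theorem together with the uniform lower bound $\inf_{a\in\omega}(f^{n-\nu-k}_{a,b})'(f^{\nu+k}_{a,b}(c))\ge\Cr{c:celb}e^{\Cr{kappa:free}(n-\nu-k)}$ from \eqref{out:exp1}, one gets
\[
|x_k-y_k|\le\frac{1}{\Cr{c:celb}}e^{-\Cr{kappa:free}(n-\nu-k)}|x_{n-\nu}-y_{n-\nu}|,
\]
possibly with the extra factor $q_*$ as in Remark~\ref{rem:outsidedist} if one passes through the parameter derivative; either way the prefactor depends only on $f_{a_0}$. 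Summing the geometric series over $k$ yields $\sum_k|x_k-y_k|\le\frac{C}{1-e^{-\Cr{kappa:free}}}|f^n_{a',b}(c)-f^n_{a'',b}(c)|$, and combining with the nonlinearity bound gives \eqref{freed1} with $\Cr{c:alvesdist}=\Cr{c:alvesdist}'\,N(f_{\mathcal N},U)/(1-e^{-\Cr{kappa:outside}'})$ as claimed, absorbing all the $f_{a_0}$-dependent constants into $\Cr{c:alvesdist}'$ and noting $\Cr{kappa:outside}'$ can be taken as (a lower bound for) $\Cr{kappa:free}$.

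The main obstacle I expect is the step deducing the exponential backward contraction of the intervals $\xi_{\nu+k}(\omega,b)$ from \eqref{out:exp1}: in Lemma~\ref{le:outside} this came from Lemma~\ref{mane}(i) applied at every intermediate return, but here \eqref{out:exp1} is stated only as a lower bound for $(f^{n-j}_{a,b})'(f^j_{a,b}(c))$ for the specific endpoint $n$, so one must be slightly careful that this is exactly what is needed — it is, because $|x_k-y_k|\cdot\inf(f^{n-\nu-k}_{a,b})'\le|x_{n-\nu}-y_{n-\nu}|$ uses precisely the derivative of $f^{n-\nu-k}_{a,b}$ over the piece of orbit from time $\nu+k$ to time $n$. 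A minor secondary point to check is that $\eta_k$, a point between $x_k$ and $y_k$ on the circle, genuinely lies outside $I^{**}$; since consecutive iterates stay in $U$ and the two orbits are close (their time-$n$ separation is at most $2\delta_1$ and it only contracts going backward), the short arc joining them does not meet the small interval $I^{**}$, so the nonlinearity bound $N(f_{\mathcal N},U)$ applies. Everything else is a routine repeat of the computation in Lemma~\ref{le:outside}.
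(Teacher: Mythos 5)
Your proposal is correct and takes essentially the same approach as the paper: the paper's own proof simply notes that the argument is word for word that of Lemma~\ref{le:outside} with the nonlinearity bound $80/(\Cr{c:lder}\delta)$ in~\eqref{eq:nonlinearity} replaced by $N(f_{\mathcal N},U)$. You have carried out that substitution faithfully, and in particular you correctly identify that the hypothesis~\eqref{out:exp1} (combined with~\eqref{eq:stretched} via Corollary~\ref{dera_cor} and Lemma~\ref{le:growth} to control the factor $q_*$) supplies exactly the backward geometric contraction of $|x_k-y_k|$ that Remark~\ref{rem:outsidedist} supplied in the proof of Lemma~\ref{le:outside}.
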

\begin{proof} We will not give the proof since it is virtually word by
  word the same as that of Lemma \ref{le:outside}. The only
  difference that the upper bound $80/(\Cr{c:lder}\delta)$   in
  \eqref{eq:nonlinearity} is replaced by $N({\mathcal N},U)$.

\end{proof}

\section{Induction}\label{sec:bound-free-ess}

Recall that the partition of the return interval
$I^*=(c-\delta,c+\delta)$ was introduced on Section~\ref{Partition}.

Recall also that we defined $\xi_n(a,b)=f^n_{a,b}(c)$.

The next lemma will be used for the startup of the induction.

\begin{lemma}\label{startupINT}
Assume that $\delta_1$ is sufficiently small and the neighborhood
$\caln$ of $(a_0,1)$ is sufficiently small. Then there are constants
$\Cr{c:cba-sqrt}$, $\Cr{c:celbstretched}$,
$\Cl[Kappa-const]{kappa:startup}>0$ so that for every
$\varepsilon=2^{-J_0}$ sufficiently small, there is a function
$b_0(\varepsilon)$ so that 
for every $b_0(\varepsilon) \leq b<1$ one can partition
$(a_0-\varepsilon,a_0-\varepsilon^2)$ into a partition ${\mathcal Q}$ of
countable number of parameter intervals $\omega$ and an exceptional
set ${\mathcal E}$ of measure $o(\epsilon)$, so that for all
$\omega\in {\mathcal Q}$ there is an $n_0=n_0(\omega)$
so that for some $(r,\ell)$, with $r\leq \sqrt{n_0}$, (or equivalently
$e^{-r}\geq e^{-\sqrt{n_0}}$)

$$
I_{r,\ell}\subset \xi_{n_0}(\omega,b)\subset I_{r,\ell}^+,
$$

and such that for every $a\in\omega$
\begin{itemize}
\item[(a)] $(f^j_{a,b})'(f_{a,b}(c))\geq \Cr{c:celbstretched} e^{\Cr{kappa:startup} j}$ for
$0\leq j \leq n_0-1$;
\item[(b)] $\partial_af^j_{a,b}(c)\geq \Cr{c:celbstretched} e^{\Cr{kappa:startup}(j-1)}$ for
$1\leq j \leq n_0$;
\item[(c)] $|\xi_j(a,b)-c|> \Cr{c:cba-sqrt}e^{-\sqrt{j}}$ for $1 \leq j < n_0$;
\item[(d)] $(f^{n_0-1}_{a,b})'(f_{a,b}(c))\geq e^{2(n_0-1)^{2/3}}$;
\item[(e)]   $|\xi_{n_0}(a,b)-c|\geq e^{-\sqrt{n_0}}$   
\end{itemize}

The corresponding statement holds also for the interval $(a_0+\varepsilon^2,a_0+\varepsilon)$.

\end{lemma}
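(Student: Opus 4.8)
The plan is to treat the interval $(a_0-\varepsilon,a_0-\varepsilon^2)$ and run the standard Benedicks--Carleson startup argument, exploiting that at the MT parameter $a_0$ the critical orbit is preperiodic and lands (after time $m$) on a repelling periodic orbit of multiplier $\Lambda$. First I would fix the small symmetric interval $I$ around $c$ and the neighborhood $\caln$ of $(a_0,1)$ supplied by Lemma~\ref{mane}, so that the ``outside'' exponential estimate $(f_{a,b}^n)'(x)\ge \Cr{c:celb}e^{\Cr{kappa:outside}n}$ holds for all $(a,b)\in\caln$ whenever the orbit of $x$ stays outside $I$ and returns to $I$ at time $n$. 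Next, by continuity of $(a,b)\mapsto f_{a,b}^j(c)$ and the fact that at $(a_0,1)$ the point $\xi_1(a_0)=f_{a_0}(c)$ lies outside $I$ and its orbit is eventually captured by the repelling cycle, I would choose $N_1$ (depending only on $f_{a_0}$ and $I$, not on $\varepsilon$) such that for all $(a,b)$ in a possibly smaller neighborhood one has the clean exponential bound $(f_{a,b}^j)'(f_{a,b}(c))\ge \Cr{c:celbstretched}e^{\Cr{kappa:startup}j}$ for $0\le j\le N_1$, with $\Cr{kappa:startup}$ a fixed fraction of $\min(\Cr{kappa:outside},\tilde\kappa/4,\Cr{kappa:free})$; this uses \eqref{estBP1} and \eqref{man1} plus the fact that near $a_0$ the orbit shadows the repelling cycle for a long controllable time. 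This handles item (a) for small times, and then one lets the first free return $n_0$ be the first time the orbit $\xi_n(a,b)$ enters $I^*=(c-\delta,c+\delta)$.

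The second stage is the parameter partition. Using \eqref{derax}, the $a$-derivative $\partial_a\xi_n(a,b)$ dominates $(f_{a,b}^{n-1})'(f_{a,b}(c))$, which by stage one grows at least like $\Cr{c:celbstretched}e^{\Cr{kappa:startup}(n-1)}$ as long as the orbit has stayed outside $I$; hence on $(a_0-\varepsilon,a_0-\varepsilon^2)$ the map $a\mapsto\xi_n(a,b)$ is a diffeomorphism onto its image with derivative growing exponentially, so the images $\xi_n((a_0-\varepsilon,a_0-\varepsilon^2),b)$ expand and eventually cross $I^*$. I would then let $n_0(\omega)$ be defined so that $\xi_{n_0}$ first covers a whole partition element: partition $(a_0-\varepsilon,a_0-\varepsilon^2)$ according to which $I_{r,\ell}$ the image first hits, using the convention $I_{r,\ell}\subset\xi_{n_0}(\omega,b)\subset I_{r,\ell}^+$ to absorb the boundary pieces, and put into the exceptional set $\mathcal E$ the sub-intervals whose image is too small to contain any $I_{r,\ell}$ or which violate the approach-rate condition $|\xi_j-c|>\Cr{c:cba-sqrt}e^{-\sqrt j}$ for $j<n_0$. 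Since for $j<n_0$ the orbit is outside $I^*$ by definition of $n_0$, and $\delta=e^{-r_\delta}$ is a fixed constant while $e^{-\sqrt j}\to 0$, condition (c) is automatic once $\Cr{c:cba-sqrt}\le\delta$ and $j$ is large; for the finitely many small $j$ it holds by continuity from the MT parameter, where $|\xi_j(a_0)-c|\ge\overline d>0$. The distortion estimate of Lemma~\ref{le:outside} (together with Remark~\ref{rem:out}) controls the bounded distortion of $a\mapsto\xi_{n_0}(a,b)$ over each $\omega$, which is what lets the measure of $\mathcal E$ be estimated as $o(\varepsilon)$: the bad sets correspond to a controlled-distortion preimage of the finitely many ``too-short'' target configurations and of $\{|\xi_j-c|\le\Cr{c:cba-sqrt}e^{-\sqrt j}\}$, whose total length in the image is $o(|\xi_{n_0}(\omega,b)|)$, hence $o(|\omega|)$ after pulling back.

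Finally I would verify (b), (d), (e). Item (b) follows from (a) via \eqref{derax}: $\partial_a\xi_j(a,b)=\partial_a f^j_{a,b}(c)\ge (f_{a,b}^{j-1})'(f_{a,b}(c))\ge\Cr{c:celbstretched}e^{\Cr{kappa:startup}(j-1)}$. Item (d) is obtained by making $n_0$ at least as large as some fixed $N_2$ — one may always require this by shrinking the initial interval or noting that for $\varepsilon$ small the first return time $n_0\ge c\log(1/\varepsilon)\to\infty$ — so that the exponential bound $\Cr{c:celbstretched}e^{\Cr{kappa:startup}(n_0-1)}$ dominates the stretched-exponential $e^{(n_0-1)^{2/3}}$; this is where one fixes $\varepsilon_0$ and the function $b_0(\varepsilon)$, the latter forced by requiring $(a_0-\varepsilon,a_0-\varepsilon^2)\times[b_0(\varepsilon),1]\subset\caln$ and that the finite-time continuity estimates survive. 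Item (e) holds because at a free return $\xi_{n_0}(\omega,b)$ contains a full $I_{r,\ell}$ with $e^{-r}\ge\delta$, but more relevantly the partition is set up so that $r\le n_0^2$, i.e. $e^{-r}\ge e^{-\sqrt{n_0}}$, which is precisely (e); elements of $\omega$ whose return is deeper than $r=n_0^2$ — equivalently $|\xi_{n_0}-c|<e^{-\sqrt{n_0}}$ — are thrown into $\mathcal E$, and their measure is again $o(\varepsilon)$ by the distortion bound, since in the image this is an interval of length $O(e^{-\sqrt{n_0}})=o(\delta)$. The interval $(a_0+\varepsilon^2,a_0+\varepsilon)$ is handled identically by symmetry of the construction. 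The main obstacle I anticipate is the bookkeeping that makes the exceptional measure genuinely $o(\varepsilon)$ \emph{uniformly in} $b\in[b_0(\varepsilon),1]$: one must check that all the constants $\Cr{c:celb},\Cr{kappa:outside},\Cr{c:freedist},\Cr{c:bp}$ entering the distortion and expansion bounds can be taken independent of $b$ on the neighborhood $\caln$ (they can, by \eqref{secondb}, \eqref{thirdb} and the fact that $\caln$ is fixed once $I$ is), and that the deteriorating ``constant term'' $2-2b$ in $f_{a,b}'$ near $c$ does not yet interfere — which it does not, precisely because stage one only uses orbits \emph{outside} $I$, where the $(x-c)^2$ and cubic terms are irrelevant.
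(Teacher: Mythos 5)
Your overall strategy is the right one and matches the paper's in spirit: use the repelling periodic orbit $P$ that absorbs $\xi_m(a_0)$, run the orbit near $P$ and then outside $I^{**}$ using the outside-expansion Lemma~\ref{mane}, control distortion of $a\mapsto\xi_n(a,b)$ by Lemma~\ref{le:outside}, and obtain the partition into $\omega$'s by pulling back the phase-space partition $\{I_{r,\ell}\}$ at the first essential return. Items (a)--(e) then follow as you describe. However, there is a technical gap in the way you treat the parameter interval.

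The paper begins by carving $(a_0-\varepsilon,a_0-\varepsilon^2)$ into dyadic pieces $\eta_j=(a_0-2^{-j},a_0-2^{-j-1})$, $j=N_0,\dots,2N_0-1$. This decomposition is not cosmetic: over the whole interval $(a_0-\varepsilon,a_0-\varepsilon^2)$ the distance $|a-a_0|$ ranges over a factor of $\varepsilon^{-1}$, so after the orbit lands near $P$ at time $m$, different ends of the parameter interval escape the linearizing domain $U_0$ at times differing by roughly $(\log\Lambda)^{-1}\log(1/\varepsilon)$. Treating the whole interval as a single monotone branch of $a\mapsto\xi_n(a,b)$, as you propose, leaves the distortion of $(f_{a,b}^{n-1})'(f_{a,b}(c))$ across the interval uncontrolled during the phase when part of the interval has already left $U_0$ and part has not; the distortion lemmas you invoke (Lemma~\ref{le:outside}, Lemma~\ref{le:hyptimedist}) are stated for subintervals small enough that the whole piece shares one combinatorics. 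Within a single $\eta_j$ the ratio $|a-a_0|$ varies by at most a factor of $2$, so escape times are comparable and the distortion constant $\Cr{c:radonnikodym}$ is uniform in $j$ and in $b$; this is precisely what makes the deleted fraction $\Cr{c:proportion}e^{-\sqrt{n_0}}/\delta$ uniform over the pieces, which in turn is what gives $|\mathcal E|=o(\varepsilon)$. A second, smaller issue: you alternate between defining $n_0$ as the first entry into $I^*$ and as the first time the image covers a full $I_{r,\ell}$, and you propose to discard into $\mathcal E$ the subintervals whose image is ``too small to contain any $I_{r,\ell}$.'' The paper does not discard these; at an inessential return it keeps iterating until a full $I_{r,\ell}$ is covered, and only adjoins genuinely short boundary pieces to neighbors. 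Discarding all inessential-return pieces would not obviously yield a $o(\varepsilon)$ measure bound, since such pieces need not be a vanishing proportion of the interval. The rest of your plan --- in particular deriving (b) from (a) via \eqref{derax}, getting (d) from $n_0\gtrsim\log(1/\varepsilon)\to\infty$, and deleting the $e^{-\sqrt{n_0}}$-neighborhood of $c$ to obtain (e) --- is exactly the paper's route.
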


\begin{proof} We partition $(a_0-\varepsilon,a_0-\varepsilon^2)$ into
  subintervals $\eta_j=(a_0-2^{-j},a_0-2^{-j-1})=(a_j',a_j'')$,
  $j=J_0,\dots,2J_0-1$. The critical point $c$ of unperturbed map
  $f_{a_0}$ is mapped to a repelling periodic point $P$ in $m$
  iterates. Let $U_0$ be a symmetric interval contained in the
  linearization domain of $P$ so that
  $$
  (f_{a,b}^\ell)'(x)\geq \lambda_1^\ell=\Lambda_1>1\qquad \text{for}\
  x\in U_0.
$$
Let $\tilde{\eta}_j=(a_0-2^{-j},a_0)$. Then there is a constant
$\Cl{c:perorb}$ so that 
$$(f_{a,b}^i)'(\xi_m(a,b))\geq \Cr{c:perorb}\lambda_1^i,\quad\text{for
  all}\ a\in\tilde{\eta}_j 
$$
as long as $\xi_{m+i}(\tilde{\eta}_j,b)\subset U_0$.
We now state a version of Lemma \ref{le:outside} which will be used in
the startup construction.

\begin{lemma}\label{le:outside1}

\end{lemma} Suppose that for $a\in\omega$ there is a constant
$\tilde{C}=\frac{1}{2}$, say, so that 

\begin{equation}\label{eq:expgrowth1}
|\xi_{m}(\omega',b)|\geq \frac1{q*}\inf_{a\in\omega'}
(f_{a,b}^{m-\nu})'(f_{a,b}^\nu(c))\cdot|\xi_\nu(\omega',b)|.
\end{equation}



and 
\begin{equation}\label{eq:exponential1}
(f^j_{a,b})'(f_{a,b}(c))\geq \tilde{C} e^{\frac{\hat{\kappa}}{4},},\qquad 1\ \leq j\leq \nu-1.
\end{equation}

Then there is a  constant $\Cr{c:freedist}$ so that 
\begin{equation}\label{freed2}
\frac{(f_{a',b}^{m-\nu})'(f^\nu_{a',b}(c))}{(f_{a'',b}^{m-\nu})'
(f^\nu_{a'',b}(c))}\leq\exp\left(\Cr{c:freedist}\frac{|f^{m}_{a',b}(c)
-f^{n}_{a'',b}(c)|}{\delta}\right).
\end{equation}

We will not give the  proof of this lemma since it is identical to
that of Lemma \ref{le:outside}. We conclude that

 $$
|\xi_{m+i}(\tilde{\eta}_j,b)|\geq \frac{\Cr{c:perorb}}{q_*}\lambda_1^i\cdot|\xi_m(\tilde{\eta}_j,b)|
$$
and $q_*$ has a uniform control by Corollary \ref{dera_cor}.

We also get a uniform distorsion control of
$\partial_af_{a,b}^\nu(\xi_j(a,b))$, i.e. there is a a constant $\tilde{C}$
depending only on $a_0$ so that for all $a$, $a'$ in $\tilde{\eta}_j$

\begin{equation}\label{initialdist}
\frac{1}{\tilde{C}}\leq
\frac{\partial_af_{a,b}^\nu(\xi_j(a,b))}{\partial_af_{a',b}^\nu(\xi_j(a',b))}\leq
\tilde{C},\qquad \nu=1,2,\dots,
\end{equation}
as long as $\xi_{m+\nu}(\tilde{\eta}_j,b)\subset U_0$.

It
follows that there is a first time $L$ so that

$\xi_{m+L+1}(\tilde{\eta}_j,b)\not\subset U_0$.
We write  $\tilde{\eta}_j$ as the disjoint union (except for an endpoint)
$$
\tilde{\eta}_j={\eta}_j\cup{\eta'}_j.
$$

By \eqref{initialdist} it follows that $\xi_{m+L }(\eta_j,b)$ and
$\xi_{m+L }(\eta_j',b)$ are comparable withing a  fixed constant 
$\Cl{c:radonnikodym}$, which only depends on $f_{a_0}$.

We continue to iterate $\xi_{m+L+i}(\eta_j,b)$ for $i=1,2,\dots$. By
Lemma \ref{mane}, Lemma \ref{le:growth} and the control of the constant
$q_*$ it follows that at the first time $J$ such that
$\xi_{m+L+J}(\eta_j,b)\cap I^*\neq\emptyset$

$$
|\xi_{m+L+J}(\tilde{\eta_j},b)|\geq \frac{\Cr{c:celb}}{q_*}e^{\Cr{kappa:outside}J}|\xi_{m+L}(\tilde{\eta_j},b)|.
$$

Then $\Cr{kappa:startup}=\min(\log \lambda_1,\Cr{kappa:outside})$ is the
required Lyapunov exponent in (a).
It follows  by Lemma \ref{mane} (ii), Lemma
\ref{le:growth} and the control of $q_*$ that the time $J$ will be
finite. At time $N_0=m+L+J$, we partition
$$
(c-\delta,c+\delta)\cap  \xi_{N_0}(\eta_j,b)
$$
into preimages $\{\omega\}$ under the map $a\mapsto \xi_{N_0}(a,b)$ of the
partition ${\mathcal Q}=\{I_{r,l}\}$ and define $n_0=N_0$ for these
$\omega$:s. In the special case when $\xi_{N_0}(\eta_j,b)$ only
intersects partially an end interval of ${\mathcal Q}=\{I_{r,l}\}$, we
just keep iterating until we cover complete intervals of ${\mathcal
  Q}$. In other special case when $\xi_{N_0}(\eta_j,b)$ only partially
covers a $I_{r,l}$ interval we adjoin the corresponding preimage to
the adjacent interval. Simultaneously we delete the part of $ \eta_j$ that is
mapped to $(c-e^{-\sqrt{n_0}},c+e^{-\sqrt{n_0}})$. By the uniform
distorsion of both the $x$-derivative and $a$-derivative which follows
from Lemma \ref{mane}, Lemma \ref{le:outside} and Corollary \ref{dera_cor} a
proportion of at most $\Cl{c:proportion}e^{-\sqrt{n_0}}/\delta$ of the piece of
$\eta_j$ mapped into $(c-e^{-\sqrt{n_0}},c+e^{-\sqrt{n_0}})$. Here $\Cr{c:proportion}$ is a 
constant only depending on $f_{a_0}$.  We continue to
iterate  $\xi_{N_0}(\eta_j,b)\setminus (c-\delta,c+\delta) $, still
using Lemma \ref{mane},  Lemma \ref{le:outside} and Corollary \ref{dera_cor}.
For the new returning interval  $\omega$ formed in this way
$n_0(\omega)>N_0$ and still only a quantity proportional to
$e^{-\sqrt{n_0}}/\delta$ is deleted.  The conclusions
(a)--(e) of Lemma \ref{startupINT} are immediately verified.
\end{proof}

\begin{remark} The startup argument is essentially the same as the
  free period argument in the main induction and the argument in Lemma
  \ref{mane} in Section \ref{Outside}. See the main induction
  below in this section for a more thorough discussion. The only difference is
  the initial period that is spent close to the repelling periodic
  point which in some sense replaces the bound period. The expansive
  behaviour close to the repelling periodic point allows us to avoid
  inessential free returns and gives the initial exclusion ratio of at most
  $\Cr{c:proportion}e^{-\sqrt{n_0}}/\delta$.

\end{remark}  

\medskip
  
Let us now fix $b$, $0< b_0(\varepsilon)\leq b <1$. Note that  for every positive integer $n$ we have
a family $\calp_n$ of subintervals of $(a_0-\eps, a_0+\eps)$ (as in
Lemma~\ref{startupINT}) with pairwise disjoint interiors, such that
each element of $\calp_{n+1}$ is contained in some element of
$\calp_n$. In the set of pairs $(n,\omega)$ such that
$\omega\in\calp_n$ there is a natural structure of a combinatorial
tree, that goes down with its branches. Pairs $(n,\omega)$ are
vertices of this tree; $n$ is the level on which the vertex lies;
there is an edge from $(n,\omega)$ to $(n+1,\omega')$ if and only if
$\omega'\subset\omega$.

Certain pairs with the property $\xi_n(\omega,b) \subset I^*$ will be
called \emph{free return pairs}. 

The induction will be separate on every branch of the tree. Fixing the
branch results in considering a descending sequence of intervals
$\omega_n\in\calp_n$. If $(n,\omega_n)$ is a  free return pair,
then we will call $n$ a free return time. An important feature
of the construction is that if $n$ is not a free return time then
$\omega_n=\omega_{n-1}$. The main induction step will be from a free return
time to the next free return time. The constants $\Cr{c:celbstretched}$ and $\Cr{c:cba-sqrt}$ are
as in Lemma~\ref{startupINT}. In the whole induction they will stay
the same.

\medskip
Our \textbf{Induction Statement} is the following. If $n$ is a free
return time and and $a\in\omega$, then:
\begin{itemize}
\item[(i)] we have
\begin{equation}\label{ih1}
(f_{a,b}^{n-1})'(f_{a,b}(c))\geq e^{2(n-1)^{2/3}},
\end{equation}
\item[(ii)] for every $\nu\in[n_0,n)$
\begin{equation}\label{ih2}
(f_{a,b}^\nu)'(f_{a,b}(c))\geq e^{\nu^{2/3}},
\end{equation}
\item[(iii)] for every $\nu\in[1,n)$
\begin{equation}\label{ih3}
(f_{a,b}^\nu)'(f_{a,b}(c))\geq \Cr{c:celbstretched} e^{ \nu^{2/3}},
\end{equation}
\item[(iv)] if $\nu<n$ is also a free return time, then
\begin{equation}\label{ih4}
(f^{n-\nu}_{a,b})'(f^{\nu}_{a,b}(c))\geq C(\delta)>> 1,
\end{equation}
\item[(v)] for every $\nu\in[n_0,n]$
\begin{equation}\label{BA}
|\xi_\nu(a,b)-c|\geq e^{- \sqrt{\nu}},
\end{equation}

\item[(vi)] for every $\nu\in[0,n]$
\begin{equation}\label{BA1}
|\xi_\nu(a,b)-c|\geq \Cr{c:cba-sqrt}e^{- \sqrt{\nu}},
\end{equation}
\end{itemize}

In \cite{BC1} and \cite{BC2} statements (v) and (vi) is called the basic
assumption (BA).

Remember that $b$ sufficiently close to $1$ is fixed. We set
$\calp_n=\{\omega_b\}$ for $n=1,2,\dots,N_0$. Thus, this is the
beginning of every branch. Then we declare $n_0=n_0(\omega)$ to be the first free
return time according to the startup construction. Thus, for every
branch we have to start induction by checking that that the above
conditions are satisfied for $n=n_0(\omega)$.

\begin{lemma}\label{first_step} The Induction Statement
  conditions~(i)-(vi) are satisfied for $n=n_0$.
\end{lemma}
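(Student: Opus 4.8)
The plan is to verify conditions (i)--(vi) of the Induction Statement directly from the conclusions (a)--(e) of Lemma~\ref{startupINT}, specialized to the level $n=n_0$; essentially this is a bookkeeping lemma that reconciles the two sets of estimates and fixes the Lyapunov exponents.

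First I would observe that conclusion (d) of Lemma~\ref{startupINT} gives $(f^{n_0-1}_{a,b})'(f_{a,b}(c))\geq e^{(n_0-1)^{2/3}}$, whereas (i) demands the stronger bound $e^{2(n_0-1)^{2/3}}$. The point here is that $n_0=N_0$ (or slightly larger) can be taken as large as we please by choosing $\varepsilon=2^{-N_0}$ small in the startup construction; combined with conclusion (a), which gives genuine exponential growth $(f^j_{a,b})'(f_{a,b}(c))\geq \Cr{c:celbstretched}e^{\Cr{kappa:startup}j}$, we have $(f^{n_0-1}_{a,b})'(f_{a,b}(c))\geq \Cr{c:celbstretched}e^{\Cr{kappa:startup}(n_0-1)}$, and for $n_0$ large the linear term $\Cr{kappa:startup}(n_0-1)$ dominates $2(n_0-1)^{2/3}$. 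So (i) follows, and since $e^{2(n-1)^{2/3}}\geq e^{(n-1)^{2/3}}$, it is in fact more than (ii) asks at $\nu=n_0-1$. For (ii) at general $\nu\in[n_0,n_0)$ there is nothing to check (the range is empty), and similarly (iv) is vacuous because there is no free return time $\nu<n_0$. For (iii), range $\nu\in[1,n_0)$: again by (a), $(f^\nu_{a,b})'(f_{a,b}(c))\geq \Cr{c:celbstretched}e^{\Cr{kappa:startup}\nu}\geq \Cr{c:celbstretched}e^{\nu^{2/3}}$ since $\Cr{kappa:startup}\nu\geq \nu^{2/3}$ for $\nu\geq 1$ (as $\Cr{kappa:startup}$ is a fixed positive constant and $\nu^{1/3}\geq 1$; if $\Cr{kappa:startup}<1$ one absorbs the small-$\nu$ discrepancy into $\Cr{c:celbstretched}$, which is legitimate since $\Cr{c:celbstretched}$ was fixed in Lemma~\ref{startupINT} precisely for this purpose).

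For the basic-assumption statements (v) and (vi): conclusion (c) of Lemma~\ref{startupINT} gives $|\xi_j(a,b)-c|>\Cr{c:cba-sqrt}e^{-\sqrt{j}}$ for $1\leq j<n_0$, and conclusion (e) gives $|\xi_{n_0}(a,b)-c|\geq e^{-\sqrt{n_0}}$. Thus (vi), for $\nu\in[0,n_0]$, follows from (c) for $1\leq \nu<n_0$, from (e) for $\nu=n_0$ (noting $e^{-\sqrt{n_0}}\geq \Cr{c:cba-sqrt}e^{-\sqrt{n_0}}$ since we may assume $\Cr{c:cba-sqrt}\leq 1$), and for $\nu=0$ from $|\xi_0-c|=|c-c|$—wait, here one must note $\xi_0(a,b)=c$, so the statement at $\nu=0$ is interpreted as vacuous or excluded; the relevant range starts effectively at $\nu=1$ since $\xi_0=c$ exactly, matching the convention in \cite{BC1}. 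For (v), range $\nu\in[n_0,n_0]$, only $\nu=n_0$ matters, and that is precisely conclusion (e).

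The main (and only) obstacle is the exponent gap in (i): one must be certain that the startup construction really does deliver $n_0$ large enough, uniformly over all branches $\omega$ and all $b\in[b_0(\varepsilon),1)$, that $\Cr{kappa:startup}(n_0-1)\geq 2(n_0-1)^{2/3}$; this is where the choice of $\varepsilon=2^{-N_0}$ small enters, since $n_0\geq N_0$ by construction and $N_0\to\infty$ as $\varepsilon\to 0$. Once that threshold on $N_0$ (depending only on $\Cr{kappa:startup}$, hence only on $f_{a_0}$) is recorded, the remaining verifications are the elementary inequalities above, and the proof is complete.
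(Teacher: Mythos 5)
The paper's own proof of this lemma is the single sentence ``This is a consequence of the startup construction, Lemma~\ref{startupINT},'' so your proposal supplies exactly the bookkeeping the authors left implicit, and it takes the same approach the paper intends. You correctly identify the one genuinely nontrivial point: conclusion~(d) of Lemma~\ref{startupINT} gives only $e^{(n_0-1)^{2/3}}$, which by itself falls short of the $e^{2(n_0-1)^{2/3}}$ demanded in Induction Statement~(i); your resolution via conclusion~(a) (true exponential growth $\Cr{c:celbstretched}e^{\Cr{kappa:startup}(n_0-1)}$ dominating any stretched-exponential once $n_0\ge N_0$ is large, and $N_0\to\infty$ as $\varepsilon\to 0$) is precisely what is needed. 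The vacuousness of (ii) and (iv) at $n=n_0$, the reading of (v) from (e), and the reading of (vi) from (c) and (e) (with the $\nu=0$ term excluded by convention, since $\xi_0=c$) are all handled correctly.

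One small point deserves sharper wording. For (iii) at small $\nu$, you write that a discrepancy when $\Cr{kappa:startup}\nu<\nu^{2/3}$ can be ``absorbed into $\Cr{c:celbstretched}$.'' As written this is circular, because the same symbol $\Cr{c:celbstretched}$ appears on both sides of the inequality you are trying to derive, so it cancels and cannot help. The legitimate fix is that the constant $\Cr{c:celbstretched}$ in the Induction Statement may be chosen at the outset to be small enough (depending only on $\Cr{kappa:startup}$ and hence on $f_{a_0}$) that $\Cr{c:celbstretched}e^{\nu^{2/3}}\le\Cr{c:celbstretched}^{(a)}e^{\Cr{kappa:startup}\nu}$ for the finitely many $\nu$ with $\nu<\Cr{kappa:startup}^{-3}$, where $\Cr{c:celbstretched}^{(a)}$ is the constant actually produced by the startup construction; the paper's conflation of the two constants is the real source of the apparent gap, and saying so explicitly would tighten your argument without changing its substance.
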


This is a consequence of the startup construction, Lemma \ref{startupINT}.




Now we make a small modification of Definition~\ref{BoundMT}.

\begin{definition}\label{defbound1}
Let $a'$ be the midpoint of the interval $\omega$ such that
\[
\xi_n(\omega,b)\subset I^+_{r,l}=I_{r,l-1}\cup I_{r,l}\cup
I_{r,l+1}
\]
for some $n,r,l$. We define the \emph{bound period} as the maximal
integer $p$ so that for all $j \leq p$, $a\in\omega$, and $x
\in\xi_n(\omega,b)$
\begin{equation}\label{BC}
|f_{a,b}^j(x)-f_{a',b}^j(c)|\leq e^{-4 \sqrt{j}}.
\end{equation}
\end{definition}

By~\eqref{derxbd} and Lemma~\ref{dera}, we get for every $n,a,b,x$
\begin{equation}\label{derabd}
\partial_a f_{a,b}^n(x)\le\sum_{k=0}^{n-1}4^k=\frac{4^n-1}3<4^n.
\end{equation}


In the several next lemmas we will be using the same set of
assumptions. We formalize these in the following definition
\begin{definition} We say that \emph{Condition (*)} is satisfied if
\begin{itemize}
\item $\omega$, $n,r,l,p$ and $a'$ are as in
Definition~\ref{defbound1},
\item conditions~(iii), (v) and (vi) of the Induction Statement hold.
\end{itemize}
\end{definition}


Next we formulate another version of the Bound Distorsion Lemma

\begin{lemma}\label{bounddist}
There is a constant $\Cl{c:bddist}$ such that if Condition (*) holds, then
\begin{equation}\label{estimate1}
\frac{1}{\Cr{c:bddist}} \leq \frac{(f_{a,b}^k)'(f_{a,b}(y))}
{(f_{a,b}^k)'(f_{a,b}(c))} \leq \Cr{c:bddist}
\end{equation}
and
\begin{equation}\label{estimate2}
\frac{1}{\Cr{c:bddist}} \leq \frac{(f_{a,b}^k)'(f_{a,b}(y))}
{(f_{a',b}^k)'(f_{a',b}(c))} \leq \Cr{c:bddist}
\end{equation}
for every $x\in I_{r,l}$, $y$ between $x$ and $c$, $a\in\omega$, and
$k\le \max(p,n)$.
By making $\delta$ sufficiently small, the constant $\Cr{c:bddist}=\Cr{c:bddist}(\delta)>1$  can be chosen arbitrarily close
to $1$.

\end{lemma}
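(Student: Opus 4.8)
The plan is to follow the pattern of Lemma~\ref{bounddistMT} and of the classical Bound Distortion Lemma of \cite{BC1,BC2}, with two substitutions: the exponential gain $e^{-\beta j}$ governing the bound period there is replaced by the subexponential bound $e^{-4\sqrt j}$ of Definition~\ref{defbound1}, and the denominators in the telescoping product are controlled by the approach-rate condition (vi) of the Induction Statement together with the quadratic lower bound \eqref{secondb} for $f_{a,b}'$ near $c$. The reference orbit throughout is the critical orbit $\xi_j(a',b)=f_{a',b}^{\,j}(c)$ of the midpoint $a'$.

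\emph{Step 1 (smallness of $\xi_j(\omega,b)$ during the bound period).} I would first show that $|\xi_j(\omega,b)|\le e^{-4\sqrt j}$ for every $j\le p$. By (i) and \eqref{derax}, $\partial_a\xi_n(a,b)\ge (f_{a,b}^{n-1})'(f_{a,b}(c))\ge e^{2(n-1)^{2/3}}$, while $\xi_n(\omega,b)\subset I^+_{r,l}$ has length $\le 3e^{-r}$, so $|\omega|\le 3e^{-r}e^{-2(n-1)^{2/3}}$; on the other hand $\partial_a\xi_j<4^{\,j}$ by \eqref{derabd}, and the bound period is short, $p=O(r)=O(\sqrt n)$ (as in Lemma~\ref{le:ce}/Remark~\ref{p-est-2}, using (v)). Hence for $j\le p$
\[
|\xi_j(\omega,b)|<4^{\,j}|\omega|\le e^{O(\sqrt n)}e^{-2(n-1)^{2/3}},
\]
which is $\le e^{-4\sqrt p}\le e^{-4\sqrt j}$ once $n$ exceeds a threshold depending only on $f_{a_0}$, since $(n-1)^{2/3}$ dominates $\sqrt n$. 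This threshold is met at every free return: taking $\delta$ small makes $r_\delta$, hence the first return time $n_0$ (which needs at least of order $r_\delta$ iterates merely to reach $I^{*}=(c-\delta,c+\delta)$, cf.\ the startup construction), larger than any fixed constant, and all free-return times $n\ge n_0$.

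\emph{Step 2 (shadowing).} Fix $j\le p$ and $a\in\omega$. Since $x\in I_{r,l}\subset\xi_n(\omega,b)$, \eqref{BC} gives $|f_{a,b}^{\,j}(x)-\xi_j(a',b)|\le e^{-4\sqrt j}$, and since $\xi_j(a,b),\xi_j(a',b)\in\xi_j(\omega,b)$, Step~1 gives $|\xi_j(a,b)-\xi_j(a',b)|\le e^{-4\sqrt j}$. As $y$ lies on one side of $c$ between $c$ and $x$, $f_{a,b}$ is monotone on $[c,x]$; and as long as the interval joining $\xi_j(a,b)$ to $f_{a,b}^{\,j}(x)$ avoids $c$ — which holds because by (vi) both endpoints are at distance $\ge\frac12\Cr{c:cba-sqrt}e^{-\sqrt j}$ from $c$ while that interval has length $\le 2e^{-4\sqrt j}$ — an induction on $j$ using monotonicity of $f_{a,b}$ on intervals avoiding $c$ shows $f_{a,b}^{\,j}(y)\in[\xi_j(a,b),f_{a,b}^{\,j}(x)]$. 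Thus $f_{a,b}^{\,j}(y)$, $\xi_j(a,b)$ and $\xi_j(a',b)$ all lie within $e^{-4\sqrt j}$ of $\xi_j(a',b)$, hence are pairwise within $2e^{-4\sqrt j}$, and by (vi) and the triangle inequality each is at distance $\ge\frac12\Cr{c:cba-sqrt}e^{-\sqrt j}$ from $c$ (for the finitely many small $j$ one uses instead the separation $\overline d$ of the $f_{a_0}$-orbit of $c$ from $c$ together with continuity of $\xi_j$ in $(a,b)$).

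\emph{Step 3 (telescoping) and the main obstacle.} Now \eqref{estimate1} follows exactly as in \eqref{m0c}--\eqref{m0d}: writing
\[
\frac{(f_{a,b}^{k})'(f_{a,b}(y))}{(f_{a,b}^{k})'(f_{a,b}(c))}=\prod_{j=1}^{k}\Bigl(1+\frac{f_{a,b}'(f_{a,b}^{\,j}(y))-f_{a,b}'(\xi_j(a,b))}{f_{a,b}'(\xi_j(a,b))}\Bigr),
\]
the numerator of the $j$-th correction is $\le 13\,|f_{a,b}^{\,j}(y)-\xi_j(a,b)|\le 26\,e^{-4\sqrt j}$ by \eqref{derxbd} and Step~2, while by (vi) and \eqref{secondb} the denominator is $\ge\Cr{c:lder}(\xi_j(a,b)-c)^2\ge\frac14\Cr{c:lder}\Cr{c:cba-sqrt}^2e^{-2\sqrt j}$ when $\xi_j(a,b)\in I^{**}$ and $\ge$ a fixed positive constant otherwise; so the $j$-th factor is $1+O(e^{-2\sqrt j})$, and since $\sum_{j\ge1}e^{-2\sqrt j}<\infty$ the product lies in $[\Cr{c:bddist}^{-1},\Cr{c:bddist}]$ with $\Cr{c:bddist}=\Cr{c:bddist}(\delta)\to1$ as $\delta\to0$. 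For \eqref{estimate2} one uses that $f_{a,b}'=f_{a',b}'$ pointwise (the derivative $2+2b\cos2\pi x$ is independent of $a$), so $(f_{a',b}^{k})'(f_{a',b}(c))=\prod_{j=1}^{k}f_{a',b}'(\xi_j(a',b))$, and repeats the computation with $\xi_j(a,b)$ replaced by $\xi_j(a',b)$, invoking $|f_{a,b}^{\,j}(y)-\xi_j(a',b)|\le e^{-4\sqrt j}$ and (vi) at $a'$. The main obstacle is Step~1: one must know that the bound period is short compared with $n$ and that every free-return time $n$ is large (arranged by taking $\delta$ small), so that the super-exponential smallness $e^{-2(n-1)^{2/3}}$ of $|\omega|$ coming from the already established Collet--Eckmann growth (i) still dominates the crude factor $4^{p}=e^{O(\sqrt n)}$ accumulated along the bound period — the delicate bookkeeping, entirely in the spirit of \cite{BC1,BC2}. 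Given Step~1, Steps~2--3 are routine and formally identical to \eqref{m0c}--\eqref{m0d}, with the geometric series $\sum e^{-\beta j}$ there replaced by the convergent series $\sum e^{-2\sqrt j}$.
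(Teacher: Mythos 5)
There is a genuine gap in Step~1, and it is the crux of the matter. You estimate $|\xi_j(\omega,b)|\le 4^j|\omega|$ and argue this stays below $e^{-4\sqrt j}$ for all $j\le p$ because, you claim, $p=O(r)=O(\sqrt n)$, citing Lemma~\ref{le:ce} and Remark~\ref{p-est-2}. But those results concern the $\beta$-bound period of Definition~\ref{BoundMT} (exponential shadowing $e^{-\beta j}$, map $f_{a_0}$), whereas Lemma~\ref{bounddist} is about the bound period of Definition~\ref{defbound1} (stretched-exponential shadowing $e^{-4\sqrt j}$, family $f_{a,b}$). The slower decay allows a longer bound period: the correct estimate, given later as Lemma~\ref{LengthBP} (whose proof itself relies on Lemma~\ref{bounddist}!), is $p\le 8r^{3/2}$, hence $p=O(n^{3/4})$ after using (v) to get $r\le\sqrt n$. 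With that, your crude bound reads
\[
4^j|\omega|\le 4^{p}\cdot 3e^{-r}e^{-2(n-1)^{2/3}}\le e^{8\ln4\cdot n^{3/4}}\cdot 3e^{-r}e^{-2(n-1)^{2/3}},
\]
and since $n^{3/4}$ dominates $n^{2/3}$ this quantity tends to infinity, not to zero, for $j$ near $p$. So Step~1 does not close, and moreover there is a logical circularity: you need some a priori bound on $p$, but the only honest one ($p\le 8r^{3/2}$) is Lemma~\ref{LengthBP}, which is proved using Lemma~\ref{bounddist}.

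The paper avoids the $4^j$ bound entirely and instead proceeds by induction on $k\le p$ inside the proof of Lemma~\ref{bounddist}. To control $|\xi_j(a,b)-\xi_j(a',b)|$ it writes this as $\partial_a\xi_j(a'',b)\,|a-a'|$ by the mean value theorem, bounds $|a-a'|$ from (i), (derax) and Lemma~\ref{le:growth} by roughly $q_*e^{-2(n-1)^{2/3}}\,|\xi_n(a,b)-\xi_n(a',b)|$, and then compares $\partial_a\xi_j$ to $(f^{j-1}_{a,b})'(f_{a,b}(c))$ via Corollary~\ref{dera_cor}, finally invoking the distortion estimate being proved for $k=j-1$ to replace the critical orbit derivative by the derivative along the bound piece. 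This gives $|\xi_j(a,b)-\xi_j(a',b)|\le\frac12|f^j_{a,b}(x)-f^j_{a,b}(c)|$, which combined with (BC) yields the analogue of your Step~1 conclusion, namely \eqref{bd1}, with no appeal to an a priori bound on $p$ and no appearance of $4^j$. If you want to salvage your outline, you should replace Step~1 by this inductive-on-$k$ comparison; Steps~2--3 are then fine and essentially match the paper's telescoping.
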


\begin{proof} The proof will proceed by induction on $k$.
Using~\eqref{derxbd}, we get in the same way as in the proof of
Lemma~\ref{bounddistMT}
\begin{equation}\label{bd0}
\frac{(f_{a,b}^k)'(f_{a,b}(y))}{(f_{a,b}^k)'(f_{a,b}(c))} \leq
\exp \left( \sum_{j=1}^{k} \dfrac{13 |f_{a,b}^j(y)-f_{a,b}^j(c)|}
{f_{a,b}'(f_{a,b}^j(c))} \right).
\end{equation}

Furthermore,
\begin{equation}\label{bound-term} 
|f_{a,b}^j(y)-f_{a,b}^j(c)| \le |f_{a,b}^j(x)-f_{a,b}^j(c)| \leq
|f_{a,b}^j(x)-f_{a',b}^j(c)| + |f_{a',b}^j(c)-f_{a,b}^j(c)|,
\end{equation}
and by
\eqref{BC} we have 

\begin{equation}
|f_{a,b}^j(x)-f_{a',b}^j(c)| \leq e^{-4 \sqrt{j}}.
\end{equation}
Thus, we need to estimate $|f_{a',b}^j(c)-f_{a,b}^j(c)|$. 
Note that by the mean value theorem there is $a''$ between $a$ and
$a'$ so that 

\begin{equation}\label{mean-value}
|f_{a',b}^j(c)-f_{a,b}^j(c)|=\partial_a f^j_{a'',b}(c)\cdot |a-a'|.
\end{equation}

Note that  $|a-a'|$ can be interpreted as $|\xi_1(a,b)-\xi_1(a',b)|$.
By Lemma \ref{le:growth}
$$
|\xi_n(a,b)-\xi_n(a',b)|\geq
\frac{1}{q_*}\inf_{\tilde{a}\in[a,a']}(f_{\tilde{a},b}^{n-1})'(f_{\tilde{a},b}(c))\cdot
|\xi_1(a,b)-\xi_1(a',b)|. 
$$
By the induction statement (iii),
$(f_{\tilde{a},b}^{n-1})'(f_{\tilde{a},b}(c))\geq \Cr{c:celbstretched} e^{{(n-1)^{2/3}}}$.
We may therefore conclude that 
$|a-a'|\leq\Cr{c:celbstretched}^{-1}  q_*e^{-(n-1)^{2/3}}\cdot |\xi_n(a,b)-\xi_n(a',b)|$.
But by the mean value theorem

\begin{equation}\label{eq:diffj}
|f_{a,b}^j(x)-f_{a',b}^j(c)|=|f_{a,b}^{j-1}(f_{a,b}(x))-f_{a,b}^{j-1}(f_{a,b}(c))|
 =(f^{j-1}_{a,b})'(f_{a,b}(y))\cdot |f_{a,b}(c)-f_{a,b}(x)|.
\end{equation}

However since $|\xi_n(a,b)-\xi_n(a',b)|\leq e^{-r}$, we have
\begin{equation}\label{eq:diff1}
|f_{a,b}(c)-f_{a,b}(x)|\geq \Cr{c:lcr} \cdot |x-c|^3\geq \Cr{c:lcr}
e^{2(-r-1)}\cdot e^{-1}\cdot|\xi_n(a,b)-\xi_n(a',b)|.
\end{equation} 
By the basic assumption $e^{-r}\geq \Cr{c:cba-sqrt}e^{-\sqrt{n}}$. 
Note also that by Corollary \ref{dera_cor},
$\partial_af^j_{a'',b}(c)$ is comparable within the multiplicative constant $q_*$ to
$(f^{j-1}_{a'',b})'(f_{a'',b}(c))$. But this quantity is in itself by
induction comparable within a multiplicative constant $\Cr{c:bddist}$ to
$\inf_{\tilde{a}\in[a,a']}(f_{\tilde{a},b}^{n-1})'(f_{\tilde{a},b}(c))$. We 
use the statement of our result for $k=j-1$. 

We use \eqref{mean-value} and
note that $|a-a'|$ also can be written as $|\xi(a,b)-\xi_1(a',b)|$.
By Lemma \ref{le:growth}
\begin{equation}\label{eq:timen}
|\xi_n(a,b)-\xi_n(a'b)|\geq \frac{1}{q_*}\inf_{\tilde{a}\in[a,a']}(f^{n-1})'(f_{\tilde{a},b}(c))
\end{equation}

By combining \eqref{eq:diffj},\eqref{eq:diff1} and \eqref{eq:timen}, we obtain
\begin{align*}
|f^j_{a,b}(x)-f^j_{a,b}(c)|&=|f_{a,b}^{j-1}(f_{a,b}(x))-f_{a,b}^{j-1}(f_{a,b}(c))|\geq(f^{j-1})'(f_{a,b}(y))|f_{a,b}(x)-f_{a,b}(c)| 
  \\
                           &\geq\inf_{y\in I_r}(f^{j-1}_{a,b})'(f_{a,b}(y))\,|f_{a,b}(x)-f_{a,b}(c)|\\
                           &\geq C_7 e^{-3r}\inf_{y\in I_r}(f^{j-1}_{a,b})'(f_{a,b}(y))\\
  &\geq C_7
    e^{-2r}\inf_{y\in I_r}(f^{j-1}_{a,b})'(f_{a,b}(y))|\xi_n(a,b)-\xi_n(a',b)|\\
  &\geq C_7e^{-2r}
    \inf_{y\in I_r}(f^{j-1}_{a,b})'(f_{a,b}(y))(\inf_{\tilde{a}\in[a,a']}\partial_af^{n}_{a'',b}(c))|\xi_1(a,b)-\xi_1(a',b)|\\
  &\geq C_7 e^{-2r} \frac{ \inf_{y\in I_r}(f^{j-1}_{a,b})'(f_{a,b}(y))}{ 
    \sup_{z\in I_r}(f^{j-1}_{a,b})'(f_{a,b}(z))}
  (\inf_{\tilde{a}\in[a,a']}\partial_af^{n}_{a'',b}(c))|)\,|\xi_j(a,b)-\xi_j(a',b)|.
\end{align*}
Now $\sup_{z\in I_r}(f^{j-1}_{a,b})'(f_{a,b}(z))$ and $\inf_{y\in
  I_r}(f^{j-1}_{a,b})'(f_{a,b}(y))$ are comparable with constant
$\Cr{c:bddist}^2$, by the statement of Lemma \ref{bounddist} with
$k=j-1$. This is where the inductive step is used. Moreover 
by Corollary \ref{dera_cor} and the induction statement (i), we have
$\partial_af_{a,b}^n(c)\geq q_*^{-1} e^{2(n-1)^{2/3}}$. Furthermore $e^{-2r}\geq
e^{-2\sqrt{n}}$, by the induction statement (v), \eqref{BA}. Combining these estimates
we get
$$
|\xi_j(a,b)-\xi_j(a',b)|\leq \frac{1}{2}|f_{a,b}^j(x)-f_{a,b}^j(c)|
$$
since $n\geq n_0(a)$ and $n_0(a)$ can be choosen arbitrarily large.


%


When inserting this estimate in \eqref{bound-term} we conclude that 
\begin{equation}\label{bd1}  
|f_{a,b}^j(y)-f_{a,b}^j(c)|\leq 2e^{-4 \sqrt{j}}.
\end{equation}

To estimate $f_{a,b}'(f_{a,b}^j(c))$ from below, we use~(vi) of the
Induction Statement and~\eqref{secondb}. We get
\begin{equation}\label{bd2}
f_{a,b}'(f_{a,b}^j(c))\ge \Cr{c:lder} \Cr{c:cba-sqrt}^2 e^{-2\sqrt{j}}.
\end{equation}
Putting together~\eqref{bd0},~\eqref{bd1} and~\eqref{bd2}, we obtain

\begin{equation}\label{bd3}
\frac{(f_{a,b}^k)'(f_{a,b}(y))}{(f_{a,b}^k)'(f_{a,b}(c))}<
\exp\left(\frac{13\cdot 2}{C_9\cdot C_1^2}\sum_{j=1}^k
\frac{e^{-4\sqrt{j}}}{e^{-2\sqrt {j}}}\right).
\end{equation}

For the lower bound, we obtain in a similar way as~\eqref{bd0}
\[
\frac{(f_{a,b}^k)'(f_{a,b}(c))}{(f_{a,b}^k)'(f_{a,b}(y))} \leq
\exp \left( \sum_{j=1}^{k} \dfrac{13|f_{a,b}^j(y)-f_{a,b}^j(c)|}
{(f_{a,b})'(f_{a,b}^j(y))} \right).
\]
Note however that
\[
(f_{a,b})'(f_{a,b}^j(y)) \geq \Cr{c:lder}(f_{a,b}^j(y) - c)^2 \geq
\Cr{c:lder} \left( |c-f_{a,b}^j(c)| - |f_{a,b}^j(c) - f_{a,b}^j(y)|
\right)^2,
\]
and using~\eqref{BA} and~\eqref{bd1} we get
\[
f_{a,b}'(f_{a,b}^j(y))>\Cr{c:lder} (\Cr{c:cba-sqrt} e^{-\sqrt{j}}-2e^{-4\sqrt{ j}})^2
\]
whenever $\Cr{c:cba-sqrt} e^{-\sqrt {j}}>2e^{-4\sqrt{ j}}$. Now, $\Cr{c:cba-sqrt}$
is fixed and thus, there is a
positive integer $\tilde N$ such that if $j\ge\tilde N$ then
$\Cr{c:cba-sqrt}e^{-\sqrt{ j}}>2\times 2e^{-4\sqrt {j}}$, and then
\[
f_{a,b}'(f_{a,b}^j(y)) > \frac{\Cr{c:lder} \Cr{c:cba-sqrt}^2}4 e^{-2\sqrt{ j}}.
\]
By making $\caln$ and $I^*$ sufficiently small, we can make
$|f_{a,b}^j(c) - f_{a,b}^j(y)|$ smaller than $\Cr{c:cba-sqrt}e^{-4 \sqrt{j}}$
instead of $2e^{-4\sqrt {j}}$, and then we get
\[
f_{a,b}'(f_{a,b}^j(x)) \geq \Cr{c:lder} \Cr{c:cba-sqrt}^2\Cl[K-const]{k:K10} e^{-2\sqrt{j}}
\]
for some constant $\Cr{k:K10}$ depending only of $a_0$. In such a
way, in the same way as we obtained~\eqref{bd3}, we get a similar
estimate for the reciprocal ratio, but with a different constant. We
choose then as $\Cr{c:bddist}$ the larger of those constants and we
get~\eqref{estimate1}. The proof of \eqref{estimate2} is completely
analogous and will be omitted. As for the statement that $\Cr{c:bddist}$ can
be chosen arbitrarily close to $1$ (but larger than $1$), we refer
to the argument in Lemma \ref{bounddistMT}. 
\end{proof}

\begin{lemma}\label{LengthBP}
Assume that Condition (*) holds. Then 
the bound period $p$ in the sense of Definition \ref{defbound1} satisfies 

\[
p\leq 8r^{3/2}.
\]
\end{lemma}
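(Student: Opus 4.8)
The plan is to show directly that the defining inequality of the bound period is violated at some time of order $r^{3/2}$. Fix a point $x\in I_{r,l}\subset\xi_n(\omega,b)$, so that $|x-c|\geq e^{-r-1}$, and take $a=a'$, the midpoint of $\omega$. Since $b<1$ the map $f_{a',b}$ is an orientation‑preserving diffeomorphism, so for $1\le j\le\min(p,n)$ the increment $f_{a',b}^{j-1}$ over the interval with endpoints $f_{a',b}(x),f_{a',b}(c)$ is controlled by its derivative there; combining this with Condition~(*) and the Bound Distortion Lemma~\ref{bounddist} (estimate~\eqref{estimate2} with $k=j-1\le p$ and $y$ ranging between $x$ and $c$), then with condition~(iii) of the Induction Statement (applicable because $j-1<n$), and finally with the cubic lower bound \eqref{thirdb} (valid for all $b\le1$), I get
\[
|f_{a',b}^{j}(x)-f_{a',b}^{j}(c)|\;\ge\;\frac1{\Cr{c:bddist}}(f_{a',b}^{j-1})'(f_{a',b}(c))\,|f_{a',b}(x)-f_{a',b}(c)|\;\ge\;\frac{\Cr{c:celbstretched}\Cr{c:lcr}}{\Cr{c:bddist}}\,e^{-3r-3}\,e^{(j-1)^{2/3}} .
\]

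Next I would observe that this right‑hand side exceeds $1>e^{-4\sqrt j}$ as soon as $(j-1)^{2/3}\ge 3r+C_1$, where $C_1=3+\log(\Cr{c:bddist}/(\Cr{c:celbstretched}\Cr{c:lcr}))$ is a fixed constant depending only on $f_{a_0}$. Since $\delta$ is small, $r\ge r_\delta$ is large, so $3r+C_1$ is comfortably below $4r$ and a short computation gives an integer $j_*\le 8r^{3/2}$ with $(j_*-1)^{2/3}\ge 3r+C_1$. For that $j_*$ the displayed estimate forces $|f_{a',b}^{j_*}(x)-f_{a',b}^{j_*}(c)|>e^{-4\sqrt{j_*}}$, so by Definition~\ref{defbound1} we get $p<j_*\le 8r^{3/2}$ — provided $j_*\le\min(p,n)$. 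If $j_*>p$ there is nothing to prove, so the only remaining point is $j_*\le n$.

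For that I would use the Basic Assumption. Since $\xi_n(\omega,b)\subset I_{r,l}^+\subset I_{r-1}\cup I_r\cup I_{r+1}$, every $\xi_n(a,b)$ satisfies $|\xi_n(a,b)-c|<e^{-(r-1)}$, while condition~(v) of the Induction Statement gives $|\xi_n(a,b)-c|\ge e^{-\sqrt n}$; hence $\sqrt n>r-1$, i.e. $n>(r-1)^2$, which for $r$ large dominates $8r^{3/2}\ge j_*$. This also shows there is no circularity: the bound on $p$ invokes condition~(iii) only at times $j-1<n$, and the inequality $j_*\le n$ is obtained purely from the Basic Assumption, both of which hold under Condition~(*) independently of any bound on $p$.

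The main obstacle I anticipate is exactly this applicability bookkeeping — pinning down a time $j_*$ at which simultaneously the Bound Distortion Lemma is available (needs $j_*\le p$) and induction hypothesis~(iii) is available (needs $j_*-1<n$) — together with being careful to use the cubic lower bound $|f_{a',b}(x)-f_{a',b}(c)|\ge\Cr{c:lcr}|x-c|^3$ in the regime where \eqref{thirdb} is valid for all $b\le1$. Once these are secured the chain of inequalities above is routine, and the constant $8$ in $p\le 8r^{3/2}$ has ample slack (one in fact obtains $p<8r^{3/2}$).
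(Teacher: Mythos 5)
Your proof is correct and follows essentially the same route as the paper's: apply the Mean Value Theorem, compare $(f_{\cdot,b}^{j-1})'$ at $f_{\cdot,b}(y)$ and $f_{\cdot,b}(c)$ via the Bound Distortion Lemma, invoke Induction Statement~(iii) for the stretched-exponential lower bound, and use the cubic lower bound for $|f_{\cdot,b}(x)-f_{\cdot,b}(c)|$ to force a contradiction with the bound-period condition once $(j-1)^{2/3}\gtrsim 3r$. Your added check that $j_*\le n$ (via the Basic Assumption, so that~(iii) is actually applicable) is a reasonable piece of bookkeeping that the paper leaves implicit, but the core argument is identical.
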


\begin{proof}
We claim that $p\leq 8r^{3/2}$. Note that $\leq 8n^{3/4}<n$. We argue
by contradiction. Assume that there is $k>8r^{3/2}$ so that
$f^j_{a,b}(f_{a.b}(z)$ is still bound to $f^j_{a,b}(f_{a.b}(c)$
for all  $x\in I_{r,l}$ and all $z$ between $x$ and $c$.. By the Mean Value Theorem, there is a point $y$ between $x$
and $c$ such that
\[
|f_{a,b}^k(x)-f_{a,b}^k(c)|=|f_{a,b}(x)-f_{a,b}(c)|\cdot (f_{a,b}^{k-1})'(f_{a,b}(y)).
\]
By Lemma~\ref{bounddist} and~(iii) of the Induction Statement,
\[
(f_{a,b}^{k-1})'(f_{a,b}(y))\ge\frac1{\Cr{c:bddist}}(f_{a,b}^{k-1})'(f_{a,b}(c))\ge
\frac{\Cr{c:celbstretched}}{\Cr{c:bddist}}e^{(k-1)^{2/3}}.
\]
Since $|x-c|\ge e^{-r-1}$, by~\eqref{secondb} we get
\[
|f_{a,b}(x)-f_{a,b}(c)|\ge\frac{\Cr{c:lder}}3 e^{-3r-3}.
\]
Putting the last three inequalities together, we get
\[
|f_{a,b}^k(x)-f_{a,b}^k(c)|\ge \frac{\Cr{c:celbstretched}}{\Cr{c:bddist}}e^{{(k-1)}^{2/3}}
\cdot\frac{\Cr{c:lder}}3 e^{-3r-3}.
\]
Taking into account~\eqref{bd1} (which is valid also for $y=x$), we
get
\[
2>2e^{-\sqrt{k}}>\frac{\Cr{c:celbstretched}\Cr{c:lder}}{3\Cr{c:bddist}}e^{(k-1)^{2/3}}
e^{-3r-3}.
\]
Therefore,
\[
 (k-1)^{2/3}<3r-\log \Cl[K-const]{k:K11}
\]
where $\Cr{k:K11}$ is a constant only depending on $a_0$.

If $\delta$ is sufficiently small, then $\frac{1}{2}r>-\log \Cr{k:K11}$, and we get
$ k^{2/3}<(k-1)^{2/3}+\frac{2}{3}k^{-1/3}<4r$.We conclude that $k\leq
8r^{3/2}$ and this gives a contradiction.

\end{proof}



Let us prove an elementary lemma about our family.

\begin{lemma}\label{elem}
For the family of double standard maps, if $0<|x-c|<1/2$ then
\begin{equation}\label{eq-elem}
f_{a,b}'(x)>\frac{|f_{a,b}(x)-f_{a,b}(c)|}{|x-c|}.
\end{equation}
\end{lemma}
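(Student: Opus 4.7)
The plan is to reduce the inequality to a one-variable calculus fact via the substitution $u = x-c$. Since $c=1/2$, we have $\sin(2\pi x) = \sin(\pi + 2\pi u) = -\sin(2\pi u)$ and $\cos(2\pi x) = -\cos(2\pi u)$. Working with the natural lift (which is justified because $|u|<1/2$, so the lifted difference has absolute value at most $1+1/\pi < 2$ and dominates the circle distance), I obtain
\begin{equation*}
f_{a,b}(x) - f_{a,b}(c) \;=\; 2u - \tfrac{b}{\pi}\sin(2\pi u) \;=:\; g(u), \qquad f_{a,b}'(x) \;=\; 2 - 2b\cos(2\pi u) \;=\; g'(u).
\end{equation*}
Since $g'(u) \geq 2-2b \geq 0$ for $b \leq 1$, the function $g$ is monotone nondecreasing with $g(0)=0$, so $g(u)$ carries the same sign as $u$ and $|g(u)|/|u| = g(u)/u$.

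Next I set $h(u) = u\,g'(u) - g(u)$, so that the desired inequality is exactly $h(u)/u > 0$ for $0 < |u| < 1/2$. I compute $h(0)=0$ and
\begin{equation*}
h'(u) \;=\; g'(u) + u\,g''(u) - g'(u) \;=\; u\,g''(u) \;=\; 4\pi b\, u \sin(2\pi u).
\end{equation*}
For $0 < |u| < 1/2$ the argument $2\pi u$ lies in $(-\pi,\pi)\setminus\{0\}$, so $u$ and $\sin(2\pi u)$ share a sign; assuming $b>0$ (the relevant range for the family), this gives $h'(u) > 0$. Hence $h$ is strictly increasing on $(-1/2,1/2)$ with $h(0)=0$, so $h(u) > 0$ for $u > 0$ and $h(u) < 0$ for $u < 0$. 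Dividing through by $u$ (with the expected sign flip when $u<0$) yields $g'(u) > g(u)/u = |g(u)|/|u|$, which is precisely \eqref{eq-elem}.

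No real obstacle is anticipated; the whole argument is a direct computation. The only bookkeeping to keep straight is (i) the sign flip when $u<0$, which is handled automatically by noting $g(u)/u = |g(u)|/|u|$, and (ii) the passage from the lift to the circle, which only introduces a (weak) inequality in the harmless direction.
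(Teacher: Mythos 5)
Your proof is correct and is essentially the same argument as the paper's: the paper simply observes that $f_{a,b}''(t)=-4\pi b\sin(2\pi t)$ makes $f_{a,b}$ strictly convex on $(c,c+1/2)$ and strictly concave on $(c-1/2,c)$, then quotes the standard convexity fact that the derivative at an endpoint strictly exceeds the secant slope. Your auxiliary function $h(u)=u\,g'(u)-g(u)$ with $h'(u)=u\,g''(u)$ is exactly that convexity argument unwound by hand, so the two proofs are the same in substance, with yours being a more explicit (and slightly longer) rendering plus a careful remark on the lift-versus-circle bookkeeping that the paper leaves implicit.
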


\begin{proof}
We have $c=1/2$ and $f_{a,b}''(t)=-4\pi b\sin(2\pi t)$. Therefore
$f_{a,b}$ is strictly convex in $(c,c+1/2)$, and thus for
$x\in(c,c+1/2)$ we get~\eqref{eq-elem}. Similarly, in $(c-1/2,c)$ the
function $f_{a,b}$ is strictly concave, and~\eqref{eq-elem} follows
similarly.
\end{proof}

\begin{lemma}\label{deraftbd}
There exists a positive constant $\Cl{c:lowerpdir}$ such that if Condition (*)
holds, then
\begin{equation}\label{aaa0}
(f_{a,b}^{p+1})'(x)>\Cr{c:lowerpdir}e^r\cdot e^{-4 \sqrt{p}}.
\end{equation}
\end{lemma}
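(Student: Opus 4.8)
The plan is to factor the derivative $(f_{a,b}^{p+1})'(x)$ as $f_{a,b}'(x) \cdot (f_{a,b}^{p})'(f_{a,b}(x))$ and to bound each factor separately. For the first factor, since $x \in I_{r,l}$, we have $|x-c| \geq e^{-r-1}$, so by~\eqref{secondb} we get $f_{a,b}'(x) \geq \Cr{c:lder}(x-c)^2 \geq \Cr{c:lder} e^{-2r-2}$; alternatively, using Lemma~\ref{elem} together with~\eqref{thirdb} we get a bound of the same order, $f_{a,b}'(x) \gtrsim e^{-2r}$. For the second factor, I would compare $(f_{a,b}^{p})'(f_{a,b}(x))$ with $(f_{a,b}^{p})'(f_{a,b}(c))$ using the Bound Distortion Lemma~\ref{bounddist} (valid since Condition~(*) holds and $p$ is the bound period, so $k=p\le p$), obtaining
\[
(f_{a,b}^{p})'(f_{a,b}(x)) \geq \frac{1}{\Cr{c:bddist}} (f_{a,b}^{p})'(f_{a,b}(c)),
\]
and then bound $(f_{a,b}^{p})'(f_{a,b}(c))$ from below using~(iii) of the Induction Statement: $(f_{a,b}^{p})'(f_{a,b}(c)) \geq \Cr{c:celbstretched} e^{p^{2/3}}$.

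Multiplying these two lower bounds gives $(f_{a,b}^{p+1})'(x) \gtrsim e^{-2r} e^{p^{2/3}}$, which is not yet in the required form $\gtrsim e^{r} e^{-4\sqrt{p}}$. The key to closing the gap is to use the definition of the bound period as a \emph{maximal} $p$: at the end of the bound period, by~\eqref{BC} applied at time $p+1$ (or the analysis right after it, as in Lemma~\ref{LengthBP}), the orbit has escaped, meaning $|f_{a,b}^{p+1}(x)-f_{a',b}^{p+1}(c)| > e^{-4\sqrt{p+1}}$, hence (after absorbing $f_{a',b}^{p+1}(c)-f_{a,b}^{p+1}(c)$ as in the proof of Lemma~\ref{bounddist}) $|f_{a,b}^{p+1}(x)-f_{a,b}^{p+1}(c)| \gtrsim e^{-4\sqrt{p}}$. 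On the other hand, by the Mean Value Theorem there is $y$ between $f_{a,b}(x)$ and $f_{a,b}(c)$ with
\[
|f_{a,b}^{p+1}(x)-f_{a,b}^{p+1}(c)| = (f_{a,b}^{p})'(f_{a,b}(y)) \cdot |f_{a,b}(x)-f_{a,b}(c)|,
\]
and $|f_{a,b}(x)-f_{a,b}(c)| \leq \Cr{c:ucr}|x-c|^3 \cdot(\text{correction}) \lesssim e^{-3r}$ by~\eqref{thirdb} since $|x-c| \leq e^{-r}$. Combining, $(f_{a,b}^{p})'(f_{a,b}(y)) \gtrsim e^{-4\sqrt{p}} e^{3r}$, and another application of Lemma~\ref{bounddist} transfers this to $(f_{a,b}^{p})'(f_{a,b}(x)) \gtrsim e^{-4\sqrt{p}} e^{3r}$. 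Finally multiplying by $f_{a,b}'(x) \gtrsim e^{-2r}$ yields $(f_{a,b}^{p+1})'(x) \gtrsim e^{r} e^{-4\sqrt{p}}$, which is~\eqref{aaa0} with $\Cr{c:lowerpdir}$ depending only on $a_0$ (through $\Cr{c:lder}$, $\Cr{c:ucr}$, $\Cr{c:bddist}$ and numeric constants).

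The main obstacle is the bookkeeping of the distortion constant $\Cr{c:bddist}$ and, more importantly, passing from $|f_{a,b}^{p+1}(x)-f_{a',b}^{p+1}(c)| > e^{-4\sqrt{p+1}}$ (which is what maximality of $p$ in Definition~\ref{defbound1} directly gives) to a genuine lower bound on $|f_{a,b}^{p+1}(x)-f_{a,b}^{p+1}(c)|$ — i.e. controlling the parameter-drift term $|f_{a',b}^{p+1}(c)-f_{a,b}^{p+1}(c)|$ so that it does not swallow the escape estimate. This is exactly the estimate carried out inside the proof of Lemma~\ref{bounddist} (where it is shown that this term is at most half of $|f_{a,b}(x)-f_{a,b}(c)|$, using the Induction Statement~(i) and Lemma~\ref{le:growth}), so I would invoke that computation rather than redo it. A minor additional point is that $\sqrt{p+1}$ and $\sqrt{p}$ differ by a bounded amount only after noting $\sqrt{p+1}-\sqrt{p} \to 0$, so the exponents $e^{-4\sqrt{p+1}}$ and $e^{-4\sqrt{p}}$ agree up to a fixed multiplicative constant, which is harmless.
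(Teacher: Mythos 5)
Your proof takes essentially the same route as the paper's: maximality of the bound period gives the escape estimate $|f_{a,b}^{p+1}(x)-f_{a',b}^{p+1}(c)|>e^{-4\sqrt{p+1}}$; the parameter-drift term $|f_{a,b}^{p+1}(c)-f_{a',b}^{p+1}(c)|$ is absorbed exactly as inside Lemma~\ref{bounddist}; the Mean Value Theorem then turns this into a lower bound on $(f_{a,b}^p)'(f_{a,b}(y))$ for some $y$ between $f_{a,b}(x)$ and $f_{a,b}(c)$; bound distortion passes it to $(f_{a,b}^p)'(f_{a,b}(x))$; and the Chain Rule finishes. The one place you deviate, and it contains a small slip, is the last step. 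You estimate the two factors $f_{a,b}'(x)$ and $|f_{a,b}(x)-f_{a,b}(c)|$ separately, using $f_{a,b}'(x)\gtrsim e^{-2r}$ from~\eqref{secondb} and the \emph{purely cubic} bound $|f_{a,b}(x)-f_{a,b}(c)|\lesssim e^{-3r}$. But~\eqref{thirdb} actually gives $|f_{a,b}(x)-f_{a,b}(c)|\lesssim e^{-r}\bigl((2-2b)+\Cr{c:ucr}e^{-2r}\bigr)$, and the linear term $2-2b$ cannot be discarded a priori: when $2-2b>e^{-2r}$ your intermediate claim $(f_{a,b}^p)'(f_{a,b}(y))\gtrsim e^{3r}e^{-4\sqrt{p}}$ fails. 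Your final answer is nevertheless correct, because you also under-estimate $f_{a,b}'(x)$ by the same factor, so the error cancels; but as written the two intermediate bounds are not both valid. The paper sidesteps this entirely with Lemma~\ref{elem}: since $f_{a,b}'(x)>|f_{a,b}(x)-f_{a,b}(c)|/|x-c|$, the product $f_{a,b}'(x)\cdot(f_{a,b}^p)'(f_{a,b}(x))$ is bounded below by $\frac{1}{2\Cr{c:bddist}^2}\,e^{-4\sqrt{p+1}}/|x-c|$ with $|f_{a,b}(x)-f_{a,b}(c)|$ cancelling out, and only $|x-c|\le e^{-r}$ is then needed. If you prefer your two-factor decomposition, keep the $(2-2b)$ term in both~\eqref{secondb} and~\eqref{thirdb} and observe that the ratio $\bigl((2-2b)+\Cr{c:lder}e^{-2r-2}\bigr)/\bigl((2-2b)+\Cr{c:ucr}e^{-2r}\bigr)$ is bounded below by a constant depending only on $\Cr{c:lder},\Cr{c:ucr}$; that repairs the argument.
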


\begin{proof}
By Definition~\ref{defbound1}, there exists $a\in\omega$ such that
\begin{equation}\label{aaa1}
|f_{a,b}^{p+1}(x)-f_{a',b}^{p+1}(c)|\geq e^{-4 \sqrt{p+1}}.
\end{equation}

We have
\begin{equation}\label{aaa2}
  |f_{a,b}^{p+1}(x)-f_{a',b}^{p+1}(c)| \leq
|f_{a,b}^{p+1}(x)-f_{a,b}^{p+1}(c)| + |f_{a,b}^{p+1}(c)-f_{a',b}^{p+1}(c)|.
\end{equation}
By the Mean Value Theorem, there is a point $y$ between $x$
and $c$ such that
\begin{equation}\label{aaa3}
|f_{a,b}^{p+1}(x)-f_{a,b}^{p+1}(c)| =|f_{a,b}(x)-f_{a,b}(c)| \cdot
(f_{a,b}^p)'(f_{a,b}(y)).
\end{equation}

Now we estimate the second summand in~\eqref{aaa2}.
As in the proof of Lemma~\ref{bounddist}, we can prove that 

$$
|f_{a,b}^{p+1}(c)-f_{a',b}^{p+1}(c)|\leq\frac12|f^{p+1}_{a',b}(c) -f^{p+1}_{a,b}(x)|. 
$$
Therefore
$$
|f_{a,b}^{p+1}(x)-f_{a,b}^{p+1}(c)|\geq
\frac12|f_{a,b}^{p+1}(x)-f_{a',b}^{p+1}(c)|
\geq \frac12e^{-4\sqrt{p+1}}.
$$



{}From Lemma~\ref{bounddist} we get
\[
(f_{a,b}^{p})'(f_{a,b}(x))\ge\frac1{\Cr{c:bddist}^2}
(f_{a,b}^{p})'(f_{a,b}(y)),
\]
so
\[
(f_{a,b}^{p})'(f_{a,b}(x))>\frac{1}{2\Cr{c:bddist}^2}\cdot
e^{-4\sqrt{ p+1}} \cdot \frac1{|f_{a,b}(x)-f_{a,b}(c)|}.
\]
By the Chain Rule and Lemma~\ref{elem} we get from this inequality
\[
(f_{a,b}^{p+1})'(x)>\frac{1}{2\Cr{c:bddist}^2}\cdot
e^{-4\sqrt{ p+1}} \cdot \frac1{|x-c|}.
\]
Since $x\in
I_r$, we have $|x-c|\le e^{-r}$, and we get~\eqref{aaa0} with a
suitable choice of $\Cr{c:lowerpdir}$.
\end{proof}

Let $(n,\omega)$ be a free return pair. Consider the intervals
$\xi_{n+p+1+s}(\omega,b)$, $s=0,\ldots,s_0-1$, where $s_0$ is the
smallest nonnegative integer such that
\[
\xi_{n+p+1+s_0}(\omega,b) \cap I^* \neq \varnothing.
\]
For $0 \leq s < s_0,$ we say that $\xi_{n+p+1+s} (\omega,b)$ is in
free orbit and the length of this orbit is $s_0$. We also use the
notation $n'=n+p+1+s_0$ and it is our new free return time.

At the first free return there are different cases that can occur.

\medskip {\it Case 1.} $\Omega_{n'}=\xi_{n'}(\omega,b)$ is completely
contained in $I^*$ but does not contain a complete interval
$I_{r,\ell}$. Then either $\xi_{n+p+1+s_0}(\omega,b)$ is contained in
an interval $I_{r,l}$ or it is contained in the union of two adjacent
intervals $I_{r,l} \cup I_{r,l+1}$.

This is called an {\it inessential free return}. In this case
$\omega\in{\mathcal P}_{n'}$, and we just continue to iterate. This
also applies if $\Omega_{n'}$ intersects the boundary of $I^*$ but
does not contain any of the end intervals.

\medskip {\it Case 2.} $\Omega_{n'}$ contains at least one of the
partition intervals $I_{r,\ell}$. This is the case of an {\it
  essential free return}. We then proceed to define a new partition on
a subset of $\omega$ according to the following algorithm.
\begin{itemize}
\item We do not include the preimage of $(c-
e^{-\sqrt{ n'}},c+e^{-\sqrt{ n'}})$ under $a\mapsto\xi_{n'}(a,b)$
in $\bigcup_{\omega'\in\calp_{n'}}\omega'$, in order that~(BA) should
be satisfied.

\item The intervals $\omega_{r,\ell}$ and $\omega_{r,\ell}'$ are
defined as the preimages of $I_{r,\ell}$ under $\omega\ni
a\mapsto\xi_{n'}(a,b)$. Because of the double covering property of
$f_{a,b}$, there could be 0,1 or 2 such intervals. These will be new
partition intervals of ${\mathcal P}_{n'}$. At the two ends of
$\omega$ we could have the property that some intervals only partially
cover $I_{r,\ell}$. In that case we use the special rule that we
adjoin the corresponding subintervals to the adjacent intervals of
${\mathcal P}_{n'}$.

\item There may be at most three subintervals of $\omega$, call them
$\omega_1$, $\omega_2$ and $\omega_3$ which are mapped outside $I^*$
by $\omega\ni a \mapsto\xi_{n'}(a,b)$. In the beginning of the
procedure there are at most two intervals mapped outside, but in later
stages because of the double covering property of $f_{a,b}$, there can
be three. In this case these intervals are {\it long}, i.e they are
not contained in intervals adjacent to the end intervals in the
partition of $(c-e^{-r_\delta+1},c+e^{-r_\delta+1})$, they are considered to be
still free, and the free period continues for these intervals. If one
or more of the intervals $\omega_1$, $\omega_2$ or $\omega_3$ are {\it
  short}, i.e not long, they are adjoined to their adjacent neighbor.
\end{itemize}

Let $X_{\text{BA}}$ be the set that is mapped to $(c-e^{-\sqrt{n'}},c+e^{-\sqrt{n'}})$. Then we define the partition
${\mathcal P}_{n'}|(\omega\setminus X_\text{BA})$ as the intervals
$\{\omega_{r,\ell}\}$, $\{\omega_{r,\ell}'\}$ and $\omega_i$,
$i=1,2,3$. Some of these intervals may be empty.

Later we will see that deletions because of (BA) do not happen in
Case~1, because the interval $\Omega_{n'}$ is too long.

In order to proceed, we need to verify, at least partially, the
induction step from time $n$ to time $n'$. Here $n'$ is interpreted as
the first free return to $I^*$ after $n$. There may be previous
returns $\nu$, where another partition element of ${\mathcal P}_{\nu}$
has a free return, while the present parameter interval does not
return.

\begin{lemma}\label{le:stepind}
Assume the Induction Statement (i)-(vi). Then the Induction Statement
conditions~(i),(ii), (iii) and~(iv) hold for any free return pair
$(n',\omega')$, where $n'$ is as above. 
\end{lemma}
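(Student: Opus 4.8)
The plan is to decompose the time interval $[n, n')$ into the bound period $[n, n+p+1)$ following the free return at time $n$, and the subsequent free orbit $[n+p+1, n')$, and to estimate the derivative $(f_{a,b}^{n'-n})'(f_{a,b}^n(c))$ as a product over these two pieces. First I would handle the bound period: by Lemma \ref{deraftbd}, under Condition (*) — which holds here because hypotheses (iii), (v), (vi) are part of the Induction Statement we are assuming — we have $(f_{a,b}^{p+1})'(\xi_n(a,b)) > \Cr{c:lowerpdir} e^r e^{-4\sqrt p}$, and by Lemma \ref{LengthBP} we have $p \le 8 r^{3/2}$, so $e^{-4\sqrt p} \ge e^{-C r^{3/4}}$, which is negligible compared to $e^r$; hence the bound period contributes a factor bounded below by $e^{r/2}$ for $\delta$ small. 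Combined with the fact that at the free return $\xi_n(\omega,b) \subset I_{r,\ell}^+$ implies $e^{-r} \ge \Cr{c:cba-sqrt} e^{-\sqrt n}$ by (vi), we get a lower bound on the bound-period gain in terms of $\sqrt n$.

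Next I would treat the free orbit of length $s_0 = n' - (n+p+1)$: since $\xi_j(\omega,b) \cap I^* = \emptyset$ (equivalently the orbit stays outside $I$, or at least outside $I^*$) for $j = n+p+1, \dots, n'-1$ and $\xi_{n'}(\omega,b)$ returns to $I^*$, Lemma \ref{mane}(i) gives $\inf_{a \in \omega'}(f_{a,b}^{s_0})'(f_{a,b}^{n+p+1}(c)) \ge \Cr{c:celb} e^{\Cr{kappa:outside} s_0}$. Multiplying the two pieces yields $(f_{a,b}^{n'-n})'(\xi_n(a,b)) \ge \Cr{c:lowerpdir}\Cr{c:celb}\, e^{r/2} e^{\Cr{kappa:outside} s_0}$, which is $\ge C(\delta) \gg 1$ once $\delta$ (hence $r \ge r_\delta$) is small enough; this proves (iv) for the pair $(n',\omega')$ (with respect to the previous free return time $n$, and for earlier free return times $\nu < n$ one multiplies with the already-established factors from the induction hypothesis (iv), which only makes the product larger). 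For (ii): write $(f_{a,b}^{\nu})'(f_{a,b}(c))$ for $\nu \in [n_0, n')$; if $\nu < n$ this is hypothesis (ii), and if $n \le \nu < n'$ we split as $(f_{a,b}^{\nu - n})'(\xi_n(a,b)) \cdot (f_{a,b}^{n-1})'(f_{a,b}(c))$, using (i) at time $n$, i.e. $(f_{a,b}^{n-1})'(f_{a,b}(c)) \ge e^{2(n-1)^{2/3}}$, and lower-bounding the first factor by the bound-period estimate of Lemma \ref{deraftbd} (if $\nu < n+p+1$, using also Lemma \ref{bounddist} to compare with the full bound period) or by the combined bound/free estimate above; the point is that $e^{2(n-1)^{2/3}}$ has enough room above $e^{\nu^{2/3}}$ to absorb any temporary loss of derivative during the bound period — specifically the loss is at most a factor $e^{-4\sqrt p} \le e^{-Cr^{3/4}}$ and $(n-1)^{2/3}$ exceeds $\nu^{2/3}$ by an amount comparable to $(\nu - n)\, n^{-1/3}$ plus the slack from the factor $2$. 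Statement (iii) then follows from (ii) for $\nu \ge n_0$ and from the induction hypothesis (iii) for $1 \le \nu < n_0$, since the constant $\Cr{c:celbstretched} \le 1$.

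The main obstacle is the bookkeeping in (ii) during the bound period: one must check that the derivative $(f_{a,b}^\nu)'(f_{a,b}(c))$ does not drop below $e^{\nu^{2/3}}$ while $\nu$ ranges over $[n, n+p+1)$, where the orbit is near the critical point and derivatives can be as small as $e^{-2\sqrt j}$ per step (by \eqref{secondb} with $b=1$). The resolution is that the surplus $e^{2(n-1)^{2/3}}$ accumulated up to the free return at $n$ — note the factor $2$ in (i) versus $1$ in (ii) — is precisely designed to cover the bound-period deficit, which by Lemma \ref{LengthBP} is at most polynomial in $r$ and hence $o(n^{2/3})$ since $r \le \sqrt n$; one invokes Lemma \ref{bounddist} to control the ratio $(f_{a,b}^{k})'(f_{a,b}(y))/(f_{a,b}^{k})'(f_{a,b}(c))$ uniformly by $\Cr{c:bddist}$ for all $k \le p$, reducing everything to the estimate at the single point $c$, which is handled by \eqref{estBP1}-type bounds. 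The arguments for (iii) and (iv) are then routine consequences.
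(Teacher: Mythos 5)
Your proposal is correct and follows essentially the same route as the paper: decompose $[n,n')$ into the bound period and the subsequent free orbit, control the bound period via Lemma~\ref{deraftbd}, Lemma~\ref{bounddist} and Lemma~\ref{LengthBP}, control the free orbit via Lemma~\ref{mane}, and use the slack coming from the exponent $2$ in Induction Statement~(i) (together with $r\le\sqrt n$) to absorb the derivative loss at the return and carry~(ii) through the bound period, with~(iii) and~(iv) falling out as you describe. The only quibbles are a harmless off-by-one in the chain-rule split $(f_{a,b}^{\nu-n})'\cdot(f_{a,b}^{n-1})'$ and a slightly loose identification of the return loss as $e^{-4\sqrt p}$ rather than $\sim e^{-2r}\ge e^{-2\sqrt n}$, but neither affects the argument.
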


\begin{proof}
Let $\eta$ be the distance from $\xi_n(\omega,b)$ to $c$. Therefore,
by Induction Statement~(i) and~\eqref{secondb},
\[
(f_{a,b}^n)'(f_{a,b}(c))>\Cr{c:lder}\eta^2 e^{2(n-1)^{2/3}}.
\]
However, by~(v), $\eta\ge \Cr{c:cba-sqrt}e^{- \sqrt{n}}$, so we get
\begin{equation}\label{si1}
(f_{a,b}^n)'(f_{a,b}(c))>\Cr{c:lder} \Cr{c:cba-sqrt}^2e^{2(n-1)^{2/3}}e^{-2\sqrt{n}}.
\end{equation}

After time $n$ there follows the bound period $p$, and by
Lemma~\ref{bounddist} and (iii) we get
\begin{equation}\label{si2}
(f^k_{a,b})'(f^{n+1}_{a,b}(c))\ge \Cr{c:bddist}^{-1}(f^k_{a,b})'(f_{a,b}(c))\ge
\Cr{c:bddist}^{-1}\Cr{c:celbstretched}e^{ k^{2/3}}
\end{equation}
for all $k\le p$. Combining~\eqref{si1} and~\eqref{si2}, we conclude
that
\begin{equation}\label{si3}
(f^{n+k}_{a,b})'(f_{a,b}(c))=(f^n_{a,b})'(f_{a,b}(c))\cdot
(f^k_{a,b})'(f^{n+1}_{a,b}(c))>\Cr{c:lder} \Cr{c:cba-sqrt}^2\Cr{c:bddist}^{-1}\Cr{c:celbstretched}
e^{2(n-1)^{2/3}-4\sqrt{n}+ k^{2/3}}.
\end{equation}
For $k\leq p \leq 8r^{3/2}\leq 8 n^{3/4}$, we conclude that

$$
(f^{n+k}_{a,b})'(f_{a,b}(c))\geq e^{(n+k)^{2/3}}
$$

At time $n+p+1$ the bound period has expired and we have for all $a\in\omega$

\begin{equation}\label{est:afterbound}
(f^{p}_{a,b})'(f_{a,b}^{n}(c))e^{-3r}\geq \Cl{c:afterbound}e^{-4\sqrt{p+1}},
\end{equation}
where $\Cr{c:afterbound}$ is a constant only depending on $f_{a_0}$.

For the total derivative we obtain
\begin{equation}
(f^{n+p})'( f_{a,b}^{n}(c))   \geq \Cr{c:afterbound} e^{2n^{2/3}}e^{-2r}(f^{p}_{a,b})'(f_{a,b}^{n-1}(c))
\end{equation}
After raising \eqref{est:afterbound} to the power $\frac{2}{3}$
we obtain
\begin{equation}
(f^{n+p})'(f^{n}_{a,b}(c))\geq \Cr{c:afterbound}^{2/3}  e^{2n^{2/3}}(f^{p}_{a,b})'(f_{a,b}^{n}(c))^{1/3}e^{-\frac{8}{3}\sqrt{p+1}}
\end{equation}
Looking at the exponents, we get using \eqref{ih3} the lower bound
$$
2n^{2/3}+ \frac{1}{3}p^{2/3}-\frac{8}{3}\sqrt{p+1}\geq 2(n+p)^{2/3}+\frac{1}{10}p^{2/3}
$$
Here we have used the information from Lemma \ref{LengthBP}, $p\leq
8n^{3/4}$ 
and that if $p\leq\frac{1}{100}n$
$$
2n^{2/3}+\frac{1}{3}p^{2/3}\geq 2(n+p)^{2/3}+\frac{1}{5}p^{2/3}.
$$

Now, if $k=p+s$ and $0<s\le s_0$, then we can use
Lemma~\ref{mane}~(ii). If $s\ge M$ then
\[
(f_{a,b}^s)'(f_{a,b}^{n+p+1}(c))\ge e^{\Cr{kappa:outside} s}.
\]
If $s<M$ we use Lemma \ref{le:lowerboundret}, which allows a perturbation
to $(a,b)\in {\mathcal N}$ with a worse constant $\overline{d}/4$  instead
of   $\overline{d}/2$ and we get

\[
(f_{a,b}^s)'(f_{a,b}^{n+p+1}(c))\ge \frac{\overline{d}}{4}.
 \]

Thus, independently whether $s\ge M$ or $s<M$, we have
\[
(f_{a,b}^s)'(f_{a,b}^{n+p+1}(c))\ge \frac{\overline{d}}{4} e^{(s-M)\Cr{kappa:outside}}.
\]
Together with~\eqref{si3} (where we substitute $k=p+s$), we get
\begin{align*}
(f^{n+k}_{a,b})'(f_{a,b}(c))&=(f^{n+p}_{a,b})'(f_{a,b}(c))\cdot
(f_{a,b}^s)'(f_{a,b}^{n+p+1}(c))\\
&>\Cr{c:lder} \Cr{c:cba-sqrt}^2\Cr{c:bddist}^{-1}\Cr{c:celbstretched}\frac{\overline{d}}{4}
e^{2(n-1)^{2/3}-4\sqrt{n} +\frac{1}{10} p^{2/3}+(s-M)\Cr{kappa:outside}}.
\end{align*}

Note that since the constants $\Cr{c:ce}$, $\Cr{c:cba-sqrt}$, $\Cr{c:bddist}$, $\Cr{c:celbstretched}$ are
absolute constants and $p$ can be made arbitrarily large by making
$\delta$ sufficiently small. Doing this,
we conclude that (ii) and (iii) of the
induction statement holds for $\nu$ satisfying  $n+p<\nu< n'$ where $n'$
is the next free return time.

We now turn to verifying (i) at the next free return time $n'$. Using
the previous derivative estimates and  (i) of Lemma \ref{mane} we get
after writing $n'=n+p+1+q$ that

$$
(f_{a,b}^{n'-1})'(f_{a,b})(c)\geq e^{2(n-1)^{2/3}+\frac{1}{10}p^{2/3}-\frac{8}{3}\sqrt{p+1}}\Cr{c:celb}e^{\Cr{kappa:outside}(q-1)},
$$
where $\Cr{c:celb}$ is an absolute constant only depending on
$a_0$. Arguing in different cases depending on the relative sizes of
$n$ and $q$, one can verify that (i) of the induction with $n$ replaced
by $n'$ holds. 

 Since $\Cr{c:celb}$ and $\Cr{c:lowerpdir}$ do not depend on $\delta$, while by making
 $\delta$ sufficiently small we can make $p$ as large as we want, we
 conclude using Lemma \ref{LengthBP} that

 $$\Cr{c:celb}\Cr{c:lowerpdir}e^r\cdot e^{-\sqrt{p+1}}\geq
 \Cr{c:celb}\Cr{c:lowerpdir}\exp\{\frac{1}{2}p^{2/3}-4\sqrt
 {p+1}\}\geq 1.
$$
 This
 proves~(iv) for $n'$.
\end{proof}

\medskip

We now delete the parameters which are mapped to

$$
(c-\Cr{c:celbstretched}\phantom{}^*e^{-\sqrt{n'}},c+ \Cr{c:celbstretched}\phantom{}^*e^{-\sqrt{n'}}),
$$
where $\Cr{c:celbstretched}\phantom{}^*
=\max(\Cr{c:celbstretched},1)$.  We conclude
    that (v) and (vi) of the induction also hold. This completes the proof
    of the induction step.



    We note that the proof also gives the information
\begin{equation}\label{sind1}
(f_{a,b}^{n'-n})'(f_{a,b}^n(c))\ge
 e^{\frac{1}{6}(n'-n)^{2/3}}.
 \end{equation}



\begin{remark}\label{sind_rem}
 From \eqref{sind1} it immediately follows that 
 \begin{equation}\label{sind2}
 (f_{a,b}^{n'-1})'(f_{a,b}(c))\ge (f_{a,b}^{n-1})'(f_{a,b}(c))\cdot
e^{\frac{1}{6}p^3},
 \end{equation}
 which will be used later.
 We will also later use~\eqref{sind1}.
 \end{remark}
\begin{remark}\label{rem:growth}
Clearly, in Lemma \ref{le:growth}, $\omega$ can be replaced by any subinterval
$\omega'\subset\omega$. If we choose $\mu=n'$ and if
$\xi_{n'}(\omega',b)\subset I^*$, and $n+p+1\le\nu< \mu=n'$,
we can use Lemma~\ref{mane}~(i) to estimate $\inf_{a\in\omega}
(f_{a,b}^{n'-\nu})'(f_{a,b}^\nu(c))$ from below by
$\Cr{c:celb}e^{\Cr{kappa:outside}(n'-\nu)}$. Moreover by Lemma \ref{le:stepind} we
know that (iii) of the induction holds for $\nu<n'$. Therefore we
conclude from \eqref{deraceq} that \eqref{ax-dist-gen} holds with $q'=q_*$.
In such a way we get
\[
|\xi_{n'}(\omega',b)|\ge \frac{\Cr{c:celb}}{q_*}e^{\Cr{kappa:outside}(n'-\nu)}
|\xi_\nu(\omega',b)|.
\]
\end{remark}

\section{The global distortion Lemma}\label{gdl}

\begin{lemma}\label{globaldist}
There exists a constant $\Cr{c:alvesdist}$, such that if $a$ and $a'$ are two
parameter points, so that $a,a'\in \omega \in{\mathcal P}_n$, where
$n$ is a free return time and Induction Statement for $n$ (and all
smaller free return times) holds, then
\begin{equation}\label{estGlobal}
\frac{(f_{a,b}^{k})'(f_{a,b}(c))}{(f_{a',b}^{k})'(f_{a',b}(c))}\leq
\Cr{c:alvesdist}
\end{equation}
for all $k \leq n-1$.
\end{lemma}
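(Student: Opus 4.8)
The plan is to bound the logarithm of the ratio by a telescoping sum over the times $0 \le j \le n-1$, splitting the orbit into free periods and bound periods exactly as in the proof of Lemma~\ref{mane}, and to use Lemma~\ref{le:growth} / Corollary~\ref{dera_cor} to convert the phase-space distances $|\xi_j(a,b)-\xi_j(a',b)|$ into a geometric series dominated by the distance at the last free return.

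First I would write
\[
\log\frac{(f_{a,b}^{k})'(f_{a,b}(c))}{(f_{a',b}^{k})'(f_{a',b}(c))}
= \sum_{j=1}^{k-1}\bigl(\log f_{a,b}'(\xi_j(a,b)) - \log f_{a,b}'(\xi_j(a',b))\bigr),
\]
using that $x\mapsto f_{a,b}'(x)$ does not depend on $a$ (only the orbit does). Each term is bounded by $\bigl(|f_{a,b}''(\eta_j)|/f_{a,b}'(\eta_j)\bigr)\,|\xi_j(a,b)-\xi_j(a',b)|$ for some $\eta_j$ between the two orbit points. The plan is then to classify each index $j<k$ into three types: (1) $j$ is in a free period with $\xi_j$ outside $I^{**}$, where $|f''|/f' \le N(f_{\mathcal N},U)$ is bounded by a fixed constant as in Lemma~\ref{le:hyptimedist}; (2) $j$ is in a free period but $\xi_j\in I^{**}\setminus I^{*}$ (an inessential excursion), where $|f''|/f' \le 80/(\Cr{c:lder}\delta)$ as in \eqref{eq:nonlinearity}; and (3) $j$ is inside a bound period following some free return $\nu<n$, where $|f_{a,b}''(\eta_j)| \le 80|\eta_j-c|$ and $f_{a,b}'(\eta_j)\ge \Cr{c:lder}|\eta_j-c|^2$, so the ratio is $\le 80/(\Cr{c:lder}|\eta_j-c|)$, which by the Bound Distortion Lemma~\ref{bounddist} and (BA) is at most a constant times $e^{\sqrt{j}}$ (or a power of $e^{\sqrt r}$ where $I_{r,\ell}$ is the return interval at $\nu$).

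Next I would control $|\xi_j(a,b)-\xi_j(a',b)|$. For $j$ in a bound period started at a free return $\nu$, Lemma~\ref{bounddist}/\eqref{bd1} already gives $|\xi_{\nu+i}(a,b)-\xi_{\nu+i}(a',b)|\le 2e^{-4\sqrt{\nu+i}}$, which beats the $e^{\sqrt j}$ blow-up of $|f''|/f'$ and makes the bound-period contribution summable with a bound independent of $\delta$. For $j$ in free periods, I would use Lemma~\ref{mane}~(i) together with Lemma~\ref{le:growth} (in the Radon–Nikodym form, with $q'=q_*$ from Corollary~\ref{dera_cor}, legitimate because the Induction Statement~(iii) holds for all indices $<n$) to get, for $\nu_\ell$ the last free-return time $\le k$ preceding the relevant free stretch,
\[
|\xi_j(a,b)-\xi_j(a',b)| \le \frac{q_*}{\Cr{c:celb}}\,e^{-\Cr{kappa:outside}(m_j)}\,|\xi_{k}(\omega,b)|,
\]
where $m_j$ is the number of iterates remaining until the next free return; summing the geometric series over all free indices $j$ gives a total bounded by $C\,|\xi_k(\omega,b)|/\delta \le C$ since $\xi_k(\omega,b)$ has length at most $2\delta$ at a return (and is controlled by $2\delta$ divided out by the $\delta$ in \eqref{eq:nonlinearity}). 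The pieces of type~(1) contribute the same way but without the $1/\delta$, hence are even smaller. Adding the three bounded contributions and exponentiating yields \eqref{estGlobal} with a constant $\Cr{c:alvesdist}$ depending only on $f_{a_0}$.

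The main obstacle I expect is bookkeeping the decomposition of $\{0,1,\dots,k-1\}$ into the alternating free/bound blocks coherently and making sure the geometric-series estimate is anchored correctly: one must verify that at \emph{every} free return $\nu \le k$ the relevant induction hypotheses (iii), (v), (vi) are available (they are, since $k\le n-1$ and the Induction Statement is assumed for $n$ and all smaller free return times), and that the distortion picked up during \emph{inessential} free returns — where $\xi_j$ may dip into $I^{**}\setminus I^*$ — is still summable; this is where the factor $1/\delta$ in \eqref{eq:nonlinearity} is compensated by the exponential contraction toward the last return supplied by Lemma~\ref{mane} and by the fact that distances are measured in units of $\delta$. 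A secondary point to be careful about is that the bound-period nonlinearity estimate must be summed using \eqref{bd1} rather than the naïve $|f''|/f'\lesssim e^{\sqrt j}$ alone; it is the product $e^{\sqrt j}\cdot e^{-4\sqrt j}$ that makes everything converge, and this is exactly the mechanism already used inside Lemma~\ref{bounddist}, so no genuinely new estimate is needed — only its careful reassembly over the whole orbit.
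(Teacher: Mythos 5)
There is a genuine gap in your proposed argument, and it stems from the way you plan to sum the per-block contributions.

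First, a small but consequential misreading: in \eqref{bd1} (and in Definition~\ref{defbound1}) the exponent $e^{-4\sqrt{j}}$ has $j$ counting from the \emph{start} of the bound period, not the global time. So if a bound period starts at a free return $t_i$, what one actually gets is $|\xi_{t_i+\nu}(a,b)-\xi_{t_i+\nu}(a',b)|\le 2e^{-4\sqrt{\nu}}$ with $\nu$ local, not $2e^{-4\sqrt{t_i+\nu}}$. With the corrected exponent, the cancellation against the nonlinearity $|f''|/f'\lesssim e^{\sqrt{\nu}}$ gives you $\sum_{\nu}e^{-3\sqrt{\nu}}$, which is an $O(1)$ contribution \emph{per bound period}. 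Since there can be on the order of $n$ free returns before time $k$, summing these $O(1)$ blocks gives $O(n)$, not a uniform constant. The same problem appears in your treatment of the free stretches: anchoring Lemma~\ref{le:growth} at the next free return gives $S_i''\lesssim \sigma_{i+1}/\delta\lesssim 1$ per free period, again not summable over all free returns. In other words, your estimates are correct blockwise but do not assemble into a bound independent of $n$.

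The missing idea is the one the paper's proof is built around. One must bound each block not by an absolute constant but by a quantity proportional to $\sigma_i=|\xi_{t_i}(a,b)-\xi_{t_i}(a',b)|$, namely $S_i'\le \Cr{c:longdistbound}\,\sigma_i e^{r_i}$ and $S_i''\le \Cr{c:freedist}\,\sigma_{i+1}e^{r_{i+1}}$; this is achieved by introducing the reference intervals $\Omega_{t_i}=I_{r_i+1}$, $\Omega_{t_i+\nu}=f_{a,b}^\nu(\Omega_{t_i})$ and tracking the ratio $|\xi_{t_i+\nu}(a,b)-\xi_{t_i+\nu}(a',b)|/|\Omega_{t_i+\nu}|$ via \eqref{eq:compbound1} and \eqref{eq:compbound2}. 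Then the decisive observation is \eqref{61i}: the returning distances double, $\sigma_{i+1}\ge 2\sigma_i$ (a consequence of Lemma~\ref{le:growth} and the expansion \eqref{sind1} between consecutive free returns). With this doubling one rearranges $\sum_i\sigma_i e^{r_i}$ by the value of $r_i$: for fixed $r=k$ the contributions form a geometric series dominated by the last return into $I_k$, which is bounded by $6/k^2$ because the two orbit points lie in the same $I_{k,\ell}^+$, of length $\lesssim e^{-k}/k^2$. Summing $\sum_k 1/k^2$ gives the uniform constant. None of this machinery --- the $\Omega$-reference intervals, the doubling of $\sigma_i$, and the grouping by $r$-value against the $1/r^2$ fineness of the partition --- appears in your plan, and without it the telescoping bound diverges linearly in the number of returns.
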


\begin{proof}
Let us fix $k \leq n-1$. Set $t_0=1$ and let $\{t_j\}_{j=1}^m$ be the
free return times arranged in an increasing order. Here $m$ is defined
by the condition $t_{m-1} < k \leq t_m-1$, and we can assume that
$t_m=n$. Observe that for all free returns $t_j$ there is $r_j$ so
that $\xi_{t_j}(a,b), \xi_{t_j}(a',b) \in I_{r_j}$.

Note that $f'_{a,b}(x)=f'_{a',b}(x)$. Thus, using the Mean Value
Theorem, we can write the logarithm of the left hand side
of~\eqref{estGlobal} as
\begin{align*}
&\log \frac{(f_{a,b}^{k})'(f_{a,b}(c))}{(f_{a',b}^{k})'(f_{a',b}(c))}
= \log\frac{\prod_{j=1}^{k} f'_{a,b}(\xi_j(a,b))}
{\prod_{j=1}^{k} f'_{a,b}(\xi_j(a',b))}\\
& \leq\sum_{i=0}^{m-1}\left(\sum_{j=t_i}^{t_i+p_i}\left|
\frac{f_{a,b}''(\eta_j)}{f_{a,b}'(\eta_j)}\right|
|\xi_j(a,b)-\xi_j(a',b)|
+\log\frac{(f_{a,b}^{t_{i+1}-t_i-p_i-1})'(\xi_{t_i+p_i+1}(a,b))}
{(f_{a',b}^{t_{i+1}-t_i-p_i-1})'(\xi_{t_i+p_i+1}(a',b))}\right)
\end{align*}
for some $\eta_j$ between $\xi_j(a,b)$ and $\xi_j(a',b)$, where $p_i$
is the corresponding bound time and $p_0=-1$. We will denote the
first sum in the parenthesis above by $S_i'$ and the second term in
parenthesis by $S_i''$. Note that the sum $S_0'$ is empty.

By~\eqref{secondb} and~\eqref{sec_der} we get
\begin{equation}\label{eq:critbev}
\left|\frac{f_{a,b}''(\eta_j)}{f_{a,b}'(\eta_j)}\right|\leq
\frac{80}{\Cr{c:lder}|\eta_j-c|}.
\end{equation}

Set $\sigma_i=|\xi_{t_i}(a,b)-\xi_{t_i}(a',b)|$. We claim that the sum
$S_i'$ can be estimated from above by a constant times $\sigma_i
e^{r_i}$.

First we note that by~\eqref{eq:critbev}, the first term of $S_i'$ can
be estimated by $80\sigma_i/(\Cr{c:lder} e^{-r_i})$.

For the remaining terms we introduce the reference interval
$\Omega_{t_i}=I_{r_i+1}$ and intervals $\Omega_{t_i+\nu}=
f_{a,b}^\nu(\Omega_{t_i})$, $\nu=0,1,2,\dots,p_i$. We have
\begin{align*}
\xi_{t_i+1}(a,b)-\xi_{t_i+1}(a',b)&=
\left(f_{a,b}(\xi_{t_i}(a,b))-f_{a,b}(\xi_{t_i}(a',b)\right)\\
&+\left(f_{a,b}(\xi_{t_i}(a',b)-f_{a',b}(\xi_{t_i}(a',b)\right)\\
&=f_{a,b}'(y)(\xi_{t_i}(a,b)-\xi_{t_i}(a',b))+(a-a')
\end{align*}
for some $y$ between $\xi_{t_i}(a,b)$ and $\xi_{t_i}(a',b)$.
Furthermore, $|\Omega_{t_i+1}|=f_{a,b}'(y')|\Omega_{t_i}|$ for some
$y'\in\Omega_{t_i}$.

We get
\begin{equation}\label{xiOmega}
\frac{|\xi_{t_i+1}(a,b)-\xi_{t_i+1}(a',b)|}{|\Omega_{t_i+1}|}
=\frac{f_{a,b}'(y)}{f_{a,b}'(y')}\cdot\frac{\sigma_i}
{|\Omega_{t_i}|}\pm\frac{|a-a'|}{f_{a,b}'(y')|\Omega_{t_i}|}.
\end{equation}
By the Mean Value Theorem
\[
\frac{|a-a'|}{f_{a,b}'(y)\sigma_i}=\frac1{f_{a,b}'(y)\partial_a
\xi_{t_i}(a'',b)}
\]
for some $a''$ between $a$ and $a'$. By~\eqref{derax} and the
induction statement~(iii),
\[
\partial_a\xi_{t_i}(a'',b)\ge(f_{a'',b}^{t_i-1})'(f_{a'',b}(c))
\ge \Cr{c:celbstretched}e^{\sqrt{t_i-1}}.
\]
By~\eqref{secondb} and the induction statement~(v),
\[
f_{a,b}'(y)\ge \Cr{c:lder}\Cr{c:cba-sqrt}^2e^{-2\sqrt{t_i}}.
\]
Therefore we get
\[
\frac{|a-a'|}{f_{a,b}'(y)\sigma_i}\le\frac{e}
{\Cr{c:lder}\Cr{c:cba-sqrt}^2\Cr{c:celbstretched}}e^{(2\sqrt{t_i}-(t_i)^{2/3})}.
\]
Since $t_i$ can be made as large as we want
(because $t_i\ge n_0$), we get $|a-a'|<f_{a,b}'(y)\sigma_i/2$. Therefore
from~\eqref{xiOmega} we get
\[
\frac12\frac{f_{a,b}'(y)}{f_{a,b}'(y')}\cdot\frac{\sigma_i}
{|\Omega_{t_i}|}<\frac{|\xi_{t_i+1}(a,b)-\xi_{t_i+1}(a',b)|}
{|\Omega_{t_i+1}|}<2\frac{f_{a,b}'(y)}{f_{a,b}'(y')}\cdot
\frac{\sigma_i}{|\Omega_{t_i}|}.
\]

Since $y,y'\in I_{r_i}\cup I_{r_i+1}$, we have
$|y-c|\in[e^{-r_i-2},e^{-r_i}]$, and the same holds for $y'$.
Therefore, by~\eqref{secondb}, we get
\[
\frac{f_{a,b}'(y)}{f_{a,b}'(y')}\le\frac{2-2b+\Cr{c:uder}e^{-2r}}
{2-2b+\Cr{c:lder}e^{-2r-4}}.
\]
The right-hand side above is a weighted average between
$(2-2b)/(2-2b)=1$ and $(\Cr{c:uder}e^{-2r})/(\Cr{c:lder}e^{-2r-4})=\Cr{c:uder}e^4/\Cr{c:lder}>1$, so
it is smaller than $\Cr{c:uder}e^4/\Cr{c:lder}$. Since we can switch $y$ and $y'$, we
get
\[
\frac{\Cr{c:lder}}{\Cr{c:uder}e^4}<\frac{f_{a,b}'(y)}{f_{a,b}'(y')}<
\frac{\Cr{c:uder}e^4}{\Cr{c:lder}}.
\]
In such a way we get the following inequality with
$\Cl{c:distparder}=2\Cr{c:uder}e^4/\Cr{c:lder}$.
\begin{equation}\label{eq:compbound1}
\Cr{c:distparder}^{-1}\frac{\sigma_i}{|\Omega_{t_i}|}\leq
\frac{|\xi_{t_i+1}(a,b)-\xi_{t_i+1}(a',b)|}{|\Omega_{t_i+1}|}\leq
\Cr{c:distparder} \frac{\sigma_i}{|\Omega_{t_i}|}.
\end{equation}

Now we want to estimate from above
\[
\frac{|\xi_{t_i+{\nu}}(a,b)-\xi_{t_i+{\nu}}(a',b)|}
{|\Omega_{t_i+{\nu}}|}.
\]
The numerator can be estimated as follows:
\begin{align}\label{eq:termnu}
|\xi_{t_i+{\nu}}(a,b)-\xi_{t_{i+{\nu}}}(a',b)|
&\leq |f_{a,b}^{{\nu}-1}(f^{t_i+1}_{a,b}(c))
-f_{a,b}^{{\nu}-1}(f^{t_i+1}_{a',b}(c))|\\
&+|f_{a,b}^{\nu}(f^{t_i}_{a',b}(c))
-f_{a',b}^{\nu}(f^{t_i}_{a',b}(c))|. \nonumber
\end{align}
By a similar argument as in the proof of Lemma \ref{LengthBP},
in particular estimating $|a-a'|$ as in that lemma
we obtain that 

$$
|f_{a,b}^{\nu}(f^{t_i}_{a',b}(c))
-f_{a',b}^{\nu}(f^{t_i}_{a',b}(c))|\leq \frac{1}{2}|\xi_{t_i+{\nu}}(a,b)-\xi_{t_{i+{\nu}}}(a',b)|.
$$

Therefore we get the estimate
\begin{align}\label{termnu1}
|\xi_{t_i+{\nu}}(a,b)-\xi_{t_{i+{\nu}}}(a',b)|
&\leq 2|f_{a,b}^{{\nu}-1}(f^{t_i+1}_{a,b}(c))
-f_{a,b}^{{\nu}-1}(f^{t_i+1}_{a',b}(c))|\\
&=2(f^{{\nu}-1}_{a,b})'(y)|\xi_{t_i+1}(a,b)-
\xi_{t_i+1}(a',b)|,\nonumber
\end{align}
where $y$ is between $\xi_{t_{i+1}}(a,b)$ and $\xi_{t_{i+1}}(a',b)$ 
(using the Mean Value Theorem).
Again by the same theorem there is $y''\in\Omega_{t_i+1}$ such that
\[
|\Omega_{t_i+{\nu}}|= (f^{{\nu}-1}_{a,b})'(y'')|\Omega_{t_i+1}|.
\]
By Lemma \ref{bounddist}, $(f^{{\nu}-1}_{a,b})'(y)/
(f^{{\nu}-1}_{a,b})'(y'')\le \Cr{c:bddist}^2$, so we get
\begin{equation}\label{eq:compbound2}
\frac{|\xi_{t_i+{\nu}}(a,b)-\xi_{t_{i}+{\nu}}(a',b)|}
{|\Omega_{t_i+{\nu}}|}\leq 2\Cr{c:bddist}^2
\frac{|\xi_{t_i+1}(a,b)-\xi_{t_i+1}(a',b)|}{|\Omega_{t_i+1}|}.
\end{equation}

Let us consider the interval $\omega_i\in\calp_{t_i}$ containing
$\omega$ and denote its midpoint by $\ahat_i$. Then by~\eqref{BC},
\begin{equation}\label{61a}
|\xi_{t_i+\nu}(\atil,b)-\xi_\nu(\ahat_i,b)|\le e^{-4\sqrt{\nu}}
\end{equation}
for all $\atil\in\omega_i$ and $\nu\le p_i$.

We claim that for all $\atil\in\omega_i$ and $\nu\le p_i$ we have
\begin{equation}\label{compdist}
|\xi_{t_i+\nu}(\atil,b)-c|\geq\frac12|\xi_\nu(\ahat_i,b)-c|.
\end{equation}

There is an integer $\nu_0$ such that
\begin{equation}\label{bound-alpha-beta}
\frac12\cdot \Cr{c:cba-sqrt}e^{-\sqrt{ \nu}}\geq e^{-4\sqrt{\nu}}
\end{equation}
for all $\nu\geq\nu_0$. Note that $\nu_0$ depends only on $\Cr{c:cba-sqrt}$,
which is independent of $\delta$. Therefore, we may assume that $\delta$ is so
small that
\begin{equation}\label{61b}
2\delta\cdot 4^{\nu_0}\le\frac12\cdot \Cr{c:cba-sqrt}e^{-\sqrt{ \nu_0}}.
\end{equation}
Moreover, by Induction Statement~(v),
\begin{equation}\label{61d}
|\xi_\nu(\ahat_i,b)-c|\ge \Cr{c:cba-sqrt}e^{-\sqrt{ \nu}}.
\end{equation}

Consider $\nu\le p_i$. If $\nu\ge\nu_0$, then by~\eqref{61a},
\eqref{bound-alpha-beta} and~\eqref{61d},  we get
\begin{equation}\label{61c}
|\xi_{t_i+\nu}(\atil,b)-\xi_\nu(\ahat_i,b)|\le
\frac12|\xi_\nu(\ahat_i,b)-c|.
\end{equation}
Therefore
\[
|\xi_{t_i+\nu}(\atil,b)-c|\ge |\xi_\nu(\ahat_i,b)-c|-
|\xi_{t_i+\nu}(\atil,b)-\xi_\nu(\ahat_i,b)|\ge
\frac12|\xi_\nu(\ahat_i,b)-c|
\]
and~\eqref{compdist} follows.

If $\nu<\nu_0$ then by~\eqref{derxbd} and~\eqref{derabd}
\begin{align*}
|\xi_{t_i+\nu}(\atil,b)-\xi_\nu(\ahat_i,b)|
&\le |\xi_{t_i+\nu}(\atil,b)-\xi_\nu(\atil,b)|+
|\xi_\nu(\atil,b)-\xi_\nu(\ahat_i,b)|\\
&\le 4^\nu|\xi_{t_i}(\atil,b)-c|+4^\nu|\atil-\ahat_i|\le
4^\nu(\delta+|\omega_i|).
\end{align*}
By Lemma~\ref{startupINT}~(b) for $j=n_0$ and by making $n_0$
sufficiently large, we get $\partial_a\xi_j(a,b)\ge 1$. Therefore
\[
|\omega|\le |\xi_{n_0}(\omega,b)|\le\delta.
\]
Thus, $|\omega_i|\le\delta$, and we get
\[
|\xi_{t_i+\nu}(\atil,b)-\xi_\nu(\ahat_i,b)|\le 2\delta\cdot 4^\nu.
\]
Together with~\eqref{61b} and~\eqref{61d} we get also in this
case~\eqref{61c}, and~\eqref{compdist} follows.

Now for each $\nu$ we choose $\tilde{a}_\nu$ between $a$ and $a'$, so
that $\xi_{t_i+\nu}(\tilde{a}_\nu,b)=\eta_{t_i+\nu}$. Thus,
by~\eqref{compdist} and~\eqref{61d},
\begin{equation}\label{61e}
|\eta_{t_i+\nu}-c|=|\xi_{t_i+\nu}(\tilde{a}_\nu)-c|\geq
\frac12|\xi_{\nu}(\ahat_i,b)-c|\ge \frac12 \Cr{c:cba-sqrt}e^{-\sqrt{\nu}}.
\end{equation}

By~\eqref{eq:critbev} we have
\begin{align}\label{61f}
\sum_{j=t_i+1}^{t_i+p_i} \left|\frac{f_{a,b}''(\eta_j)}
{f_{a,b}'(\eta_j)}\right|&|\xi_j(a,b)-\xi_j(a',b)|\\
&\leq\sum_{{\nu}=1}^{p_i}\frac{80}{\Cr{c:lder}}\cdot\frac{|\Omega_{t_i+\nu}|}
{|\eta_{t_i+\nu}-c|}\cdot\frac{|\xi_{t_i+{\nu}}(a,b)
-\xi_{t_i+{\nu}}(a',b)|}{|\Omega_{t_i+{\nu}}|}\nonumber
\end{align}
By the definition of bound periods and the definition of $\Omega_{t_i+\nu}$,
we have
\[
|\Omega_{t_i+\nu}|\le e^{-4\sqrt{\nu}}.
\]
Moreover,
\[
|\Omega_{t_i}|=e^{-r_i-1}-e^{-r_i-2}.
\]
Substituting those two inequalities,~\eqref{61e},
\eqref{eq:compbound2} and~\eqref{eq:compbound1} into the right-hand
side of~\eqref{61f}, we get
\[
\sum_{j=t_i+1}^{t_i+p_i} \left|\frac{f_{a,b}''(\eta_j)}
{f_{a,b}'(\eta_j)}\right||\xi_j(a,b)-\xi_j(a',b)|\le
\sum_{{\nu}=1}^{p_i}\Cl{c:globalboundterm}\cdot\frac{e^{-4\sqrt{\nu}}}
{e^{-\sqrt {\nu}}}\cdot\frac{\sigma_i}{e^{-r_i}}
\]
for some constant $\Cr{c:globalboundterm}$. This implies that
there is a constant $\Cl{c:globalboundterm1}$, such that
\[
\sum_{j=t_i+1}^{t_i+p_i} \left|\frac{f_{a,b}''(\eta_j)}
{f_{a,b}'(\eta_j)}\right||\xi_j(a,b)-\xi_j(a',b)|\le
\Cr{c:globalboundterm1}\frac{\sigma_i}{e^{-r_i}}.
\]
Together with the estimate on the first term of $S_i'$, that we
obtained long ago, we get a constant $\Cl{c:longdistbound}$ such that
\begin{equation}\label{61g}
S_i'\le \Cr{c:longdistbound}\frac{\sigma_i}{e^{-r_i}}.
\end{equation}
Note that $\Cr{c:longdistbound}$ depends on $\Cr{kappa:mane}$, but not on $\delta$.

To estimate $S_i''$, we use Lemma~\ref{le:outside}, and get
immediately
\[
S_i'' \leq \Cr{c:freedist} \frac{\sigma_{i+1}}{\delta}.
\]
However, $\delta>e^{-r_{i+1}}$, so
\begin{equation}\label{61h}
S_i''\leq \Cr{c:freedist} \frac{\sigma_{i+1}}{e^{-r_{i+1}}}.
\end{equation}
This estimate also applies to $S_0''$.

By Lemma~\ref{le:growth} applied to a subinterval $\omega'=[a,a']$ (or
$[a',a]$) of $\omega$ (we can do it by Remark~\ref{rem:growth})
and~\eqref{sind1}, see Remark~\ref{sind_rem}, we get
\[
\sigma_{i+1}\ge\frac1{q_*}\inf_{\atil\in\omega'}
(f_{\atil,b}^{t_{i+1}-t_i})'(f_{\atil,b}^{t_i}(c))\cdot\sigma_i
\ge\frac{\Cr{c:celb}\Cr{c:lowerpdir}}{q_*}e^{(p_i^{2/3}-4\sqrt{p_i+1})}
\cdot\sigma_i.
\]
As we already noticed in the proof of Lemma~\ref{le:stepind}, by
taking $\delta$ sufficiently small we can make $p_i$ as large as we
need and we may assume that
\[
\frac{\Cr{c:celb}\Cr{c:lowerpdir}}{q_*}\exp\{p_i^{2/3}-4\sqrt{p_i+1}\}\ge 2,
\]
and therefore we get
\begin{equation}\label{61i}
\sigma_{i+1}\ge2\sigma_i.
\end{equation}

Now we are ready to estimate the logarithm of the left-hand side
of~\eqref{estGlobal}, which is less then or equal to
$\sum_{i=0}^{m-1}(S_i'+S_i'')$. By~\eqref{61g} and~\eqref{61h}, we get
\[
\sum_{i=0}^{m-1}(S_i'+S_i'')\le(\Cr{c:longdistbound}+\Cr{c:freedist})\sum_{i=0}^m
\frac{\sigma_i}{e^{-r_i}}.
\]
Rearrange the sum $\sum_{i=0}^m\sigma_i/e^{-r_i}$ and group it
according to the values of $r_i$. Set $W_k=\{i\in[1,m]:r_i=k\}$.
Consider $k$ such that $W_k$ is nonempty. Then we can write
$W_k=\{i_s<i_{s-1}<\dots<i_0\}$, and by~\eqref{61i}, we have
$\sigma_{i_j}\le\sigma_{i_0}/2^j$. Thus,
\[
\sum_{i\in W_k}\frac{\sigma_i}{e^{-r_i}}\le2\frac{\sigma_{\mu_k}}
{e^{-k}},
\]
where $\mu_k$ is the largest element of $W_k$. However,
$\sigma_{\mu_k}$ is the length of an interval which is contained in
the union of 3 subintervals of $I_k$, and the length of each of those
subintervals is $|I_k|/k^2$. Moreover, $|I_k|<e^{-k}$. Thus,
\begin{equation}\label{61j}
\sum_{i\in W_k}\frac{\sigma_i}{e^{-r_i}}\le\frac6{k^2}.
\end{equation}
If $W_k$ is empty, then of course~\eqref{61j} also holds. In such a
way we get
\[
\sum_{i=0}^{m-1}(S_i'+S_i'')\le6(\Cr{c:longdistbound}+\Cr{c:freedist})\sum_{k=1}^\infty
\frac1{k^2}.
\]
The right-hand side of the above inequality is finite, so we can
denote its exponential by $\Cr{c:alvesdist}$ and then~\eqref{estGlobal} holds.
\end{proof}

\begin{lemma}\label{le:globaldistpar}
There exists a constant $\Cl{c:pardist}$, such that if $a$ and $a'$ are two
parameter points, so that $a,a'\in \omega \in{\mathcal P}_n$, where
$n$ is a free return time and Induction Statement for $n$ (and all
smaller free return times) holds, then
\begin{equation}\label{estGlobal1}
\frac{\partial_a f_{a,b}^{k}(c)}{\partial_a f_{a',b}^{k}(c)}\leq
\Cr{c:pardist}
\end{equation}
for all $k \leq n$.
\end{lemma}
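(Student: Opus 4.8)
The plan is to deduce this directly from the Global Distortion Lemma~\ref{globaldist} by trading the parameter derivative $\partial_a f^k_{a,b}(c)=\partial_a\xi_k(a,b)$ for the phase-space derivative $(f^{k-1}_{a,b})'(f_{a,b}(c))$, which costs only a fixed multiplicative factor thanks to Corollary~\ref{dera_cor}.

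First I would check that hypothesis~\eqref{eq:lower} of Corollary~\ref{dera_cor} is in force. Since $(n,\omega)$ is a free return pair for which the Induction Statement holds, part~(iii), that is~\eqref{ih3}, gives $(f^\nu_{a,b})'(f_{a,b}(c))\ge\Cr{c:celbstretched}e^{\nu^{2/3}}$ for every $1\le\nu<n$ and every $a\in\omega$; together with $(f^0_{a,b})'(f_{a,b}(c))=1$ this is precisely~\eqref{eq:lower} with the index running up to $k-1$ for any $k\le n$. Hence~\eqref{deraceq} applies and yields, for each $a\in\omega$ and each $1\le k\le n$,
\[
(f^{k-1}_{a,b})'(f_{a,b}(c))\ \le\ \partial_a\xi_k(a,b)\ \le\ q_*\,(f^{k-1}_{a,b})'(f_{a,b}(c)),
\]
where $q_*$ is the constant of Corollary~\ref{dera_cor}, depending only on $f_{a_0}$ (the left inequality being just~\eqref{derax}).

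Next I would invoke Lemma~\ref{globaldist}: its conclusion~\eqref{estGlobal} is valid for exponents up to $n-1$, so in particular $(f^{k-1}_{a,b})'(f_{a,b}(c))$ and $(f^{k-1}_{a',b})'(f_{a',b}(c))$ are comparable within $\Cr{c:alvesdist}$ for every $k\le n$. Combining this with the two-sided bound above,
\[
\frac{\partial_a f^k_{a,b}(c)}{\partial_a f^k_{a',b}(c)}=\frac{\partial_a\xi_k(a,b)}{\partial_a\xi_k(a',b)}\le\frac{q_*\,(f^{k-1}_{a,b})'(f_{a,b}(c))}{(f^{k-1}_{a',b})'(f_{a',b}(c))}\le q_*\,\Cr{c:alvesdist},
\]
so the lemma holds with $\Cr{c:pardist}=q_*\Cr{c:alvesdist}$. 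For $k=1$ both parameter derivatives equal $1$, so there is nothing to prove, and the case $k=0$ is vacuous since $\partial_a f^0_{a,b}(c)=0$.

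There is essentially no serious obstacle here: the proof is a short combination of two facts already established in the excerpt. The only points needing a moment's care are that the stretched-exponential lower bound~(iii) is available over the whole range $1\le\nu<n$ required to feed Corollary~\ref{dera_cor} — which is exactly the standing Induction hypothesis — and that the off-by-one shifts in the exponents ($k-1$ versus $k$, and the Global Distortion Lemma being stated up to $n-1$) line up with the desired range $k\le n$, which they do.
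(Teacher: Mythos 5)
Your argument is correct and matches the paper's own (one-line) proof exactly: the authors also deduce the lemma immediately from Lemma~\ref{globaldist}, Corollary~\ref{dera_cor}, and Induction Statement~(iii). You have simply written out the details they leave implicit, including the correct handling of the index shift between $k$ and $k-1$.
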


\begin{proof} The lemma follows immediately from
Lemma~\ref{globaldist}, Corollary~\ref{dera_cor} and Induction
Statement~(iii).
\end{proof}

\section{Part I of the proof of Theorem A}\label{slargedev2}

In this section we prove a proposition, which is an essential part of the
proof of Theorem A, and is stated as follows.



\begin{proposition}\label{MainThmfixedbCE}

Let $a=a_0$ be an MT-parameter for $f_{a}$ and let $\varepsilon>0$ be
given. There is a function $\eta(\varepsilon)\to 0$ and a function
$b_0(\varepsilon)\to 1$ as $\varepsilon\to0$ such that if
$b_0(\varepsilon)<b<1$, if $\omega_0$
is a parameter interval such that 

\begin{equation}\label{eq:inclusion}
  \omega_0\subset(a_0-\varepsilon,a_0-\varepsilon^2)\cup(a_0+\varepsilon^2,a_0+\varepsilon),
\end{equation}
such that
$I_{r,\ell}\subset \xi_{n_0}(\omega_0,b)\subset I_{r,\ell}^+$ and such
that induction assumptions (i)--(vi) are satisfied for $n=n_0$.
Then there is a set $\tilde{E}_b\subset\omega_0$ so that $|\tilde{E}_b|\geq
(1-\eta(\varepsilon))|\omega_0|$, $C=C(a_0)$
and $\hat{\kappa}=\hat{\kappa}(a_0)>0$ so that   

\begin{equation}\label{cedsm2}
(f_{a,b}^n)'(f_{a,b}(c)) \geq C e^{\hat{\kappa} n},\qquad \forall n\geq
0\qquad \forall a\in \tilde{E}_b.
\end{equation}
\end{proposition}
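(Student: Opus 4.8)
The strategy is to run the parameter-exclusion induction set up in Sections~\ref{sec:bound-free-ess}--\ref{gdl} on the tree of parameter intervals emanating from $\omega_0$, show that at each essential free return the proportion of parameters deleted (by the basic assumption (BA)) is small relative to the length of the returning interval, and then sum these losses. The key point is that the hypotheses---$\omega_0$ sits in $(a_0-\varepsilon,a_0-\varepsilon^2)\cup(a_0+\varepsilon^2,a_0+\varepsilon)$, and (i)--(vi) hold at $n=n_0$---are exactly the input of Lemma~\ref{first_step} and Lemma~\ref{le:stepind}, so the Induction Statement propagates along every branch to all free return times. Define $\tilde E_b=\bigcap_{n\ge n_0}{\mathcal E}_n\cap\omega_0$, i.e.\ the set of parameters surviving all deletions; by Lemma~\ref{le:stepind}(v)--(vi) every $a\in\tilde E_b$ satisfies (BA) for all $\nu$, hence by the induction the stretched estimate \eqref{ih3} holds for all $\nu$, and then the bound-period estimates of Section~\ref{Outside} (Lemma~\ref{dercompBP}, Lemma~\ref{mane}, Lemma~\ref{deraftbd}) upgrade this to genuine exponential growth \eqref{cedsm2}: concretely, one decomposes the orbit of $f_{a,b}(c)$ into free periods (Lemma~\ref{mane}, giving factor $e^{\Cr{kappa:outside}q}$) and bound periods (Lemma~\ref{deraftbd}/Lemma~\ref{dercompBP}, giving $e^{r}e^{-4\sqrt p}\gg 1$ and $e^{\tilde\kappa p/4}$ at the critical returns), and \eqref{sind1} from Remark~\ref{sind_rem} already records that the derivative gains $e^{(n'-n)^{2/3}/6}$ between consecutive free returns, which combined with the outside exponential rate yields a uniform $\hat\kappa>0$.

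\textbf{The measure estimate.} This is the heart of the argument. Fix a free return pair $(\nu,\omega)$ with $\omega\in{\mathcal P}_\nu$ and $\xi_\nu(\omega,b)$ covering one or more partition intervals $I_{r,\ell}$ (an essential return). At this return we delete the preimage $X_{\text{BA}}$ of $(c-\Cr{c:celbstretched}^*e^{-\sqrt\nu},c+\Cr{c:celbstretched}^*e^{-\sqrt\nu})$ under $a\mapsto\xi_\nu(a,b)$. Using the Global Distortion Lemma~\ref{globaldist} (for the $x$-derivative) together with Corollary~\ref{dera_cor} and Lemma~\ref{le:globaldistpar} (for the $a$-derivative), the map $a\mapsto\xi_\nu(a,b)$ has bounded distortion on $\omega$, so
\[
\frac{|X_{\text{BA}}|}{|\omega|}\le \Cr{c:alvesdist}\Cr{c:pardist}\,
\frac{|(c-\Cr{c:celbstretched}^*e^{-\sqrt\nu},c+\Cr{c:celbstretched}^*e^{-\sqrt\nu})\cap\xi_\nu(\omega,b)|}
{|\xi_\nu(\omega,b)|}\le \Cl{c:exclratio}\,\frac{e^{-\sqrt\nu}}{\delta},
\]
the last step because $\xi_\nu(\omega,b)\supset I_{r,\ell}$ has length at least $|I_{r,\ell}|=e^{-r}(1-e^{-1})/r^2\gtrsim \delta/r^2$ and, since $e^{-r}\ge\Cr{c:cba-sqrt}e^{-\sqrt\nu}$ on a surviving interval, $r\le\sqrt\nu$, so the denominator dominates $e^{-\sqrt\nu}\nu^{-1}$; absorbing the polynomial factor into the exponential gives the displayed ratio. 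Thus at each free return time $\nu$ the fraction of $\omega_0$ lost is at most $\Cr{c:exclratio}e^{-\sqrt\nu/2}$ (say), and since free return times are a strictly increasing sequence of integers $\ge n_0$,
\[
\frac{|\omega_0\setminus\tilde E_b|}{|\omega_0|}\le \sum_{\nu\ge n_0}\Cr{c:exclratio}e^{-\sqrt\nu/2}=:\eta(\varepsilon),
\]
which tends to $0$ as $n_0=n_0(\varepsilon)\to\infty$, i.e.\ as $\varepsilon\to 0$ (recall $\varepsilon=2^{-N_0}$ forces $n_0$ large by Lemma~\ref{startupINT}). One also must check the startup exclusion from Lemma~\ref{startupINT} contributes only $o(\varepsilon)$, already built into its statement. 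Setting $b_0(\varepsilon)$ as in Lemma~\ref{startupINT} and taking $\hat\kappa$ to be, e.g., $\frac1{10}\min(\Cr{kappa:outside},\tilde\kappa/4)$ and $C=C(a_0)$ small enough to absorb the finitely many initial iterates up to $n_0$, \eqref{cedsm2} holds for all $a\in\tilde E_b$.

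\textbf{Main obstacle.} The delicate point is not any single estimate but the \emph{bookkeeping of the tree}: when $\xi_\nu(\omega,b)$ only partially covers an end interval or is mapped partly outside $I^*$, one follows the adjunction rules of Case~1/Case~2 in Section~\ref{sec:bound-free-ess}, and one must be sure that (a) inessential returns never trigger (BA)-deletions (promised in the text: $\Omega_{n'}$ is then "too long", which one verifies via \eqref{sind1} forcing $|\xi_{n'}(\omega,b)|\gg e^{-\sqrt{n'}}$), and (b) the distortion bounds $\Cr{c:alvesdist},\Cr{c:pardist}$ genuinely do not deteriorate along the branch---this is exactly why Lemma~\ref{globaldist} is phrased uniformly over all free return times, with constants depending only on $f_{a_0}$. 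A second subtlety is the passage from the stretched bound \eqref{ih3} valid for all $\nu$ to the \emph{honest} exponential \eqref{cedsm2}: between free returns one has the $(n'-n)^{2/3}$-gain of \eqref{sind1}, but to convert an infinite product of such gains plus the outside exponential factors into a clean $Ce^{\hat\kappa n}$ one argues, as at the end of the proof of Lemma~\ref{le:stepind}, by splitting into the cases $p_i\le \tfrac1{100}(t_{i+1}-t_i)$ and $p_i$ comparable to the free stretch, using Lemma~\ref{mane}(ii) and Lemma~\ref{deraftbd} to see the derivative is never allowed to shrink and grows at a definite exponential rate on the free portions, which by bounded density of bound periods (each bound period $p_i\le 8r_i^{3/2}$ is short compared to the subsequent free stretch, by (BA) and Lemma~\ref{mane}) dominates the whole orbit.
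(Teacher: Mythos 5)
Your proposal captures the measure‐exclusion bookkeeping correctly and matches the paper's estimate $\sum_{\nu\ge n_0}e^{-\sqrt\nu/2}$, but it diverges from the paper's construction in one substantive way, and it is worth being explicit about it.

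You define $\tilde E_b=\bigcap_{n\ge n_0}{\mathcal E}_n\cap\omega_0$, i.e.\ you run the Benedicks--Carleson parameter exclusion \emph{forever}, producing a Cantor set and insisting on (BA) for all $n$. The paper instead \emph{truncates} the Cantor construction at the finite stage $\hat N$ determined by $2-2b\geq \Cr{c:lder}e^{-2\sqrt{\hat N}}$, so that the surviving parameter set is a finite union of intervals. This is not a cosmetic difference: after $\hat N$ the constant term $2-2b$ dominates in \eqref{secondb}, so the one-step derivative near $c$ is bounded below by a fixed positive number, which caps the bound-period length independently of the return depth; no further (BA)-deletions are required to keep the orbit from collapsing. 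Your infinite-intersection route \emph{does} still produce a set of nearly full measure (your union bound $\sum e^{-\sqrt\nu/2}$ is fine), and the stretched estimate \eqref{ih3} then propagates to all $n$ on that set, after which the bound/free decomposition — Lemma~\ref{mane} for free segments, Lemma~\ref{dercompBP} and Lemma~\ref{deraftbd} for bound segments — upgrades it to the Collet--Eckmann rate $Ce^{\hat\kappa n}$ as you describe. So your argument is not wrong; it is essentially the standard critical-case ($b=1$) construction, and it is in fact more explicit about where the exponential rate comes from than the paper's very terse proof, which delegates the expansion to Section~\ref{improvedsel}. What the paper's truncation buys you that your version does not is precisely what Section~\ref{improvedsel} needs next: a \emph{finite union of intervals} $\hat E_b\supset\tilde E_b$ on which uniform hyperbolicity can be asserted via Proposition~\ref{hyper1}, feeding directly into the statement of Theorem~A about the open set $E_b$. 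If you keep your infinite-intersection definition you would still need to introduce the $\hat N$-truncation later for Part~II, so the paper's choice to introduce it already here is the cleaner bookkeeping for $b<1$.

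One small caution in your sketch of the exponential upgrade: the gain $e^{(n'-n)^{2/3}/6}$ from \eqref{sind1} is only sub-exponential per return block, so by itself it cannot deliver $\hat\kappa>0$; the exponential rate must come entirely from $e^{\Cr{kappa:outside}q}$ on free stretches combined with $e^{\tilde\kappa p/4}$ on bound stretches (Lemma~\ref{dercompBP}), with $C(\delta)\gg 1$ from Induction Statement~(iv) absorbing the multiplicative constants that accumulate across free returns. You do invoke all the right lemmas, but the role of \eqref{sind1} should be downgraded to a consistency check, not the engine of the rate.
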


Note that the assumptions of Proposition 
\ref{MainThmfixedbCE} is satisfied by Lemma \ref{startupINT}.

This, together with Proposition \ref{hyper1} in Section 8 immediately lead to the
following 

\begin{corollary} The set $E$ of parameters for which the double 
standard map is uniformly expanding accumulates on the MT points 
$(a_0,1)$ in the parameter space.
\end{corollary}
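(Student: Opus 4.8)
The plan is to read the corollary off from Proposition~\ref{MainThmfixedbCE} together with the hyperbolicity upgrade of Section~\ref{improvedsel}, and then to pass to the limit $\varepsilon \to 0$.

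Fix an MT parameter $a_0$ and let $\varepsilon > 0$ be small. By Lemma~\ref{startupINT} there is $b_0(\varepsilon) \in (0,1)$ such that for every $b$ with $b_0(\varepsilon) \le b < 1$ the startup construction produces at least one parameter interval
\[
\omega_0 \subset (a_0-\varepsilon,\, a_0-\varepsilon^2) \cup (a_0+\varepsilon^2,\, a_0+\varepsilon)
\]
for which $I_{r,\ell} \subset \xi_{n_0}(\omega_0,b) \subset I_{r,\ell}^+$ and the induction assumptions (i)--(vi) hold at $n = n_0$. These are precisely the hypotheses of Proposition~\ref{MainThmfixedbCE}, which therefore provides a set $\tilde{E}_b \subset \omega_0$ with $|\tilde{E}_b| \ge (1-\eta(\varepsilon))|\omega_0|$ --- a set of positive measure once $\varepsilon$ is small enough that $\eta(\varepsilon) < 1$ --- on which the Collet--Eckmann estimate $(f_{a,b}^n)'(f_{a,b}(c)) \ge C e^{\hat\kappa n}$ holds for all $n \ge 0$, with $C$ and $\hat\kappa$ depending only on $a_0$. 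By Proposition~\ref{hyper1} of Section~\ref{improvedsel}, which combines this Collet--Eckmann estimate with the bound-period estimates of Section~\ref{Outside}, every such parameter $a$ is in fact uniformly expanding in the sense of~\eqref{uniform}; hence $\tilde{E}_b \subseteq E_b$ (up to a null set, which is irrelevant here), and in particular $E_b \cap (a_0-\varepsilon,\, a_0+\varepsilon) \neq \varnothing$ for every $b \in [b_0(\varepsilon), 1)$.

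Now, for each small $\varepsilon$, set $b_\varepsilon = \max\{\,1 - \varepsilon,\ \frac{1}{2}(1 + b_0(\varepsilon))\,\}$, so that $b_0(\varepsilon) < b_\varepsilon < 1$ and $b_\varepsilon \to 1$ as $\varepsilon \to 0$. Choose any $a_\varepsilon \in E_{b_\varepsilon}$ lying in the corresponding interval $\omega_0$; then $0 < |a_\varepsilon - a_0| < \varepsilon$. Consequently $(a_\varepsilon, b_\varepsilon) \in E$ for every small $\varepsilon$, all of these points are different from $(a_0,1)$, and $(a_\varepsilon, b_\varepsilon) \to (a_0, 1)$. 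Therefore $(a_0,1)$ belongs to the closure of $E$, i.e.\ $E$ accumulates on the MT points.

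The only step that is not routine bookkeeping is the passage from the Collet--Eckmann condition to genuine uniform expansion: as stressed in the introduction, when $b < 1$ the Collet--Eckmann condition alone no longer suffices for the existence of an absolutely continuous invariant measure or for hyperbolicity, and one genuinely needs the bound-period estimates. This is exactly the content of Proposition~\ref{hyper1}, so in the present argument it is only invoked; everything else is a matter of extracting a convergent sequence of parameters from the measure estimate of Proposition~\ref{MainThmfixedbCE}.
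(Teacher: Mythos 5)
Your proposal is correct and follows the same route the paper intends: the paper states the corollary as an immediate consequence of Proposition~\ref{MainThmfixedbCE} together with Proposition~\ref{hyper1}, and your argument just spells out the chain (startup via Lemma~\ref{startupINT}, positive-measure Collet--Eckmann set from Proposition~\ref{MainThmfixedbCE}, upgrade to uniform expansion via Proposition~\ref{hyper1}, and the limit $\varepsilon\to 0$ to produce parameters $(a_\varepsilon,b_\varepsilon)\in E$ converging to $(a_0,1)$). Nothing essential is added or missing relative to the paper.
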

 
However we will need more general formulation of the
propositions given above in order to prove Theorem A.


%
%

\medskip
The proofs will be based on the induction formulated in Section
\ref{sec:bound-free-ess}. In the critical case $b=1$, which we are not
treating in detail, the remaining
parameter set is of positive measure, while in the non-critical case $b<1$
the remaining parameter set is a finite union of intervals.

We first discuss the parameter deletion due to the (BA) assumption.

If $n$ is a free essential return time for a partition element
$\omega=(a,a')$ of a partition ${\mathcal P}_{n''}$, where $n''$ is
the essential free return immediately before $n$.

At each time we may have to omit a fraction of the parameter interval
because of (BA). Assume that the previous free return occurred in the
interval $I_{r'',\ell}$. Its length is $\frac{c}{r''^2}|I_{r''}|$,
$1\leq c\leq 3$. By
the (BA) assumption applied to time $n''$, we have
$$
e^{-r''}\geq  e^{-\sqrt{ n''}}
$$
Since $n-n''$ has a minimal length with estimate $n-n''\geq
C\log(1/\delta)$, where $C$ is a constant only depending on $f_{a_0}$.
Not also that $r''\leq \sqrt{n}$.

During the bound period the interval ${\mathcal K}_{r''+1}=(c,c+e^{-r''-1)})$ of size
$e^{-r''-1}$ is increased to size $e^{-\sqrt{p''+1}}$, where $p''\leq
8(r'')^{3/2}$ by Lemma \ref{LengthBP}. Our present interval is
of length $\frac{c'}{r''^2}|I_{r''}|$, $1\leq c' \leq 3$.

For $a=a'$ the size of $f_{a',b}({\mathcal K}_{r''+1})$ can be estimated by 
formula \eqref{thirdb} as follows
$$
|x-c|(2-2b+\Cr{c:lder}(x-c)^2)\leq |f_{a',b}(x)-f_{a',b}(c)|\leq|x-c|(2-2b+\Cr{c:uder}(x-c)^2).
$$
By inserting $x=c+e^{-r''-1}$ we obtain an estimate for
$|f_{a,b}({\mathcal K}_{r''+1})|$
as follows:
\begin{equation}
e^{-r''-1}(2-2b+\Cr{c:lcr}e^{-2r''-2})\leq |f_{a,b}({\mathcal K}_{r''+1})| \leq
e^{-r''-1}(2-2b+\Cr{c:ucr}e^{-2r''-2}).
\end{equation}
For the image of $\omega$ at time $n''+1$ we obtain the estimate
\begin{equation}\label{xinprime1}
|\xi_{n''+1}(a,b)-\xi_{n''+1}(a',b)|=f
_{a,b}'(y)\cdot|\xi_{n''}(a,b)-\xi_{n''}(a',b)| \pm |a-a'|
\end{equation}
Here $y\in I_r$ so it follows from \eqref{secondb} that $|a-a'|$ can
be estimated by the first term as in the estimate of \eqref{xiOmega}
and we obtain
$$
2-2b+\Cr{c:lder}e^{-2r''-2}<f'_{a,b}(y)<2-2b+\Cr{c:uder}e^{-2r''}.
$$

By the definition of a free return we also have the estimate
$$
\frac{1}{r''^2}e^{-r''}\leq |\xi_{n''}(a,b)-\xi_{n''}(a',b)|\leq
\frac{3}{r''^2}e^{-r''}.
$$
By Lemma \ref{bounddist} (the bound distortion lemma) and comparison
with the orbit of ${\mathcal K}_{r''}$ the size of
$|\xi_{n''+p''+1}(a,b)-\xi_{n''+p''+1}(a',b)|$ has the lower bound.
$$
\frac{1}{\Cr{c:bddist}}\cdot\frac{1}{r''^2}e^{-4\sqrt{p''+1}} \geq
\frac{1}{\Cr{c:bddist}'}\cdot\frac{1}{r''^2}e^{-(\sqrt{8(r'')^{3/2}}}\geq
\frac{1}{\Cr{c:bddist}'}e^{-8^{3/2}\cdot n^{3/8}} .
$$
We have again used that $\delta$ may be chosen arbitrarily small. 
Using \eqref{61i}, Lemma \ref{mane} and Lemma \ref{le:growth} and
it follows that the relative fraction to be
deleted is at most
\begin{equation}\label{fraction0}
\Cr{c:bddist}'^{-1}\Cr{c:celb}\frac{1}{q_*}\frac {e^{-\sqrt{n}}}{e^{-8^{3/2}\cdot n^{3/8}}} < e^{-\frac{1}{2}\sqrt{n}},
\end{equation}
since $n\geq n_0$ which 
at each time $n$ we in principle may to have to do such a deletion.
The remaining fraction of the parameter interval can then be estimated
from below as
\begin{equation}\label{fraction}
\geq \prod_{n=N_0}^\infty\left(1- e^{-\frac{1}{2} \sqrt{n}}\right)
\end{equation}
Note that this is arbitrarily close to 1 as $N_0(\varepsilon)\to\infty$
as $\varepsilon\to 0$.

\begin{proof}[Outline of proof of Proposition~\ref{MainThmfixedbCE}]
The proof of Proposition~\ref{MainThmfixedbCE} is based on the
induction. Note that the Cantor Set construction can be stopped at a finite
stage $\hat{N}$, which is defined by the relation
$$
2-2b\geq \Cr{c:lder} e^{-2\sqrt{\hat{N}}}.
$$
After this time the term $2-2b$ of equation \ref{thirdb} dominates in
the derivative and we conclude that for all $a\in\tilde{E}_b$ there is
a constant $C>0$ so that 
\begin{equation}
(f^n_{a,b})'(f_{a,b}(c))\geq Ce^{\tilde{\kappa}n}\qquad \forall n\geq 0.
\end{equation}  

A more general result with a more detailed proof is given in Proposition \ref{hyper1}. 

\end{proof}

\medskip

{\em Outline of the proof of Proposition \ref{prop:stretched:exp}}. We proceed as in the proof of Theorem A. In this case the time $\hat{N}$, after which
the linear term $2-2b$ dominates in the derivative does not exist and
the induction proceeds to infinite time.  We now have to use the large
deviation argument of \cite{BC2}. The main idea is that you delete
parameters for which the critical orbits spend to much fractions of
the time  recovering the derivative loss.
from returns to $(c-\delta^2,c +\delta^2)$. However an estimate
similar to \eqref{fraction} is still valid. We do not give the full details.

\section{Part II of the proof of Theorem A --- the  uniform expansion}\label{improvedsel}
In this section we consider $b<1$,  and  we construct 
a non-empty union of open intervals  $\hat{E}_b \supset \tilde{E}_b$ so that
for $a\in \hat{E}_b$   there is an integer  $N$ so that, $f_{a,b}^N$ is uniformly expanding. This is
formulated in Proposition \ref{hyper1}. The set $\hat{E}_b$ is
obtained by stopping the construction of the parameter set
$\tilde{E}_b$ of  Proposition \ref{MainThmfixedbCE} at a finite stage.

\smallskip
Let us outline the main idea of the proof of the uniform expansion.
We will heavily use that the fact that $d=2-2b>0$, i.e. that the inflexion point is
non-critical. In the case the starting point $x$ is outside the return
interval $I^*$ we can uses (i) of Lemma \ref{mane} to conclude that 
if $x, f_{a,b} (x),\dots, f_{a,b}^{n-1}(x) \notin I^*$, and
$f_{a,b}^n(x)\in I^*$ then
\[
(f_{a,b}^n)'(x)\geq \Cr{c:celb}e^{\Cr{kappa:outside}n}.
\]
Here it is important that the constant $\Cr{c:celb}$ does not depend on
$\delta$.

At the return time $n$ we have a derivative loss but this derivative
loss is compensated during the bound period by Lemma \ref{dercompBP}.
Since $p\to\infty$ as $\delta\to 0$,  we can  make the factor $e^{p^{2/3}}$ compensate $C_2/{C^*}$ by
making $\delta$ sufficiently small. We also use that the derivative 
of $f_{a,b}$ is bounded below by $f'_{a,b}(\frac12)=2-2b$, and we
will also denote this number by $d$.
\medskip


We state this result as follows.

\begin{proposition}\label{hyper1}
Let $a=a_0$ be a MT parameter. Then if $b_0=b_0(a_0)<1$ is
sufficiently close to 1 then for all $b\in(b_0,1)$ there is a set
$\hat{E}_b$, which is a finite union of intervals
$\{\omega_j\}_{j=0}^{J_0}$ so that for $a\in \omega_j$, there is an
integer $M_j$ so that for all $x\in {\mathbb T}$,
\begin{equation}\label{hyperbolicity}
(f_{a,b}^{M_j})'(x)\geq \lambda_j>1.
\end{equation}
\end{proposition}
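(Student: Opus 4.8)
The plan is to deduce Proposition~\ref{hyper1} from a single uniform estimate valid for \emph{every} orbit: there are $C_b>0$ and $\kappa_b>0$, depending only on $f_{a_0}$ and on the finite stage $\hat N=\hat N(b)$ at which we stop, such that
\begin{equation}\label{planuniformCE}
(f_{a,b}^n)'(x)\ge C_b\,e^{\kappa_b n}\qquad\text{for all }a\in\hat E_b,\ x\in\T,\ n\ge 0 .
\end{equation}
Granting \eqref{planuniformCE}, it suffices to take $M_j$ to be any integer with $C_b e^{\kappa_b M_j}\ge 2$ and $\lambda_j=2$; the index $j$ then only labels the components of $\hat E_b$. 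Here $\hat E_b$ is, as announced, obtained by running the Cantor construction behind Proposition~\ref{MainThmfixedbCE} only up to the stage $\hat N$ fixed by $2-2b\sim\Cr{c:lder}e^{-2\sqrt{\hat N}}$, so $\hat E_b=\bigcap_{n=0}^{\hat N}A_n(b)$ is a finite union of open intervals $\{\omega_j\}_{j=0}^{J_0}$; it is nonempty --- indeed of almost full relative measure in the startup intervals --- by the deletion estimate~\eqref{fraction}, and it contains $\tilde E_b$ since fewer pieces are deleted. The only information about the critical orbit consumed below is that on $\hat E_b$ the Induction Statement --- in particular the Collet--Eckmann bound $(f_{a,b}^k)'(f_{a,b}(c))\ge Ce^{\hat\kappa k}$ of Proposition~\ref{MainThmfixedbCE} and the basic assumption $\mathrm{dist}(\xi_j(a,b),c)\ge e^{-\sqrt j}$ --- holds for all indices $\le\hat N$.

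To prove \eqref{planuniformCE} I would follow the orbit of an arbitrary $x\in\T$ and cut $[0,n)$ into bound and free periods exactly as in the proof of Lemma~\ref{mane}, the only new feature at the start being that $x$ itself may lie in $I^{*}$, so the orbit may open with a bound period. Over a free period of length $q$ one gains a factor $\ge\Cr{c:celb}e^{\Cr{kappa:outside}q}$ by Lemma~\ref{m0}/\ref{mane} (and $\ge\overline d/4$ by Lemma~\ref{le:lowerboundret} for a short terminal free period), with constants independent of $\delta$; this is where $b<1$ enters, through $f_{a,b}'\ge 2-2b>0$, which keeps the outside expansion from degenerating. Over a bound period opened by a return into $I_r$ at time $t$ with $r\le\sqrt{\hat N}$, one self-binds the orbit to $\xi_\bullet(a,b)$ and invokes the single-point versions of Lemmas~\ref{bounddist}, \ref{LengthBP} and~\ref{deraftbd} (the word-for-word simplifications in which the parameter interval is replaced by $x$, so the $|a-a'|$ terms disappear, used in place of the $f_{a_0}$-binding of Lemma~\ref{dercompBP}); this produces $p\le 8r^{3/2}\le 8\hat N^{3/4}\le\hat N$ over which the return loss $f_{a,b}'(f_{a,b}^t(x))\ge\Cr{c:lder}|f_{a,b}^t(x)-c|^2\gtrsim e^{-2r}$ is overcome with surplus, $(f_{a,b}^{p+1})'(f_{a,b}^t(x))\gtrsim e^{r}e^{-4\sqrt p}\ge e^{r/2}$. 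Chaining the periods, and taking $\delta$ small enough that the fixed constants $\Cr{c:celb}$, $C^{*}$, $\Cr{c:lowerpdir}$ are absorbed into the surplus of one bound period (just as in Lemma~\ref{mane}), yields \eqref{planuniformCE}.

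The step I expect to be the real obstacle is a return deeper than the construction controls, $f_{a,b}^t(x)\in I_r$ with $r>\sqrt{\hat N}$, for which the natural bound period would run past stage $\hat N$ and hence past the range of $k$ for which $(f_{a,b}^k)'(f_{a,b}(c))$ is under control. The remedy --- and the reason for the precise choice of $\hat N$ --- is to truncate the bound period at time $\hat N$: for $j\le\hat N$ one still has $|f_{a,b}^{t+j}(x)-\xi_j(a,b)|\le e^{-4\sqrt j}$, so the bound distortion lemma gives $(f_{a,b}^{\hat N})'(f_{a,b}^{t+1}(x))\gtrsim(f_{a,b}^{\hat N})'(f_{a,b}(c))\ge Ce^{\hat\kappa\hat N}$, while the return loss is only $f_{a,b}'(f_{a,b}^t(x))\ge 2-2b\sim e^{-2\sqrt{\hat N}}$; since $\hat\kappa\hat N-2\sqrt{\hat N}\ge\frac12\hat\kappa\hat N$ once $\hat N$ is large --- which is where $b_0=b_0(a_0)<1$ gets fixed --- the orbit gains $\ge e^{\hat\kappa\hat N/2}$ over those $\hat N+1$ steps, still exponential, and one restarts the decomposition from $f_{a,b}^{t+\hat N+1}(x)$. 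Thus the only region left uncontrolled, $\{|x-c|<e^{-\sqrt{\hat N}}\}$ where $f_{a,b}'$ is as small as $2-2b$, cannot break the exponential bookkeeping; the calibration $2-2b\sim e^{-2\sqrt{\hat N}}$ is precisely what makes one guaranteed stretch of critical-orbit expansion outweigh the worst possible loss. All constants that appear depend only on $f_{a_0}$ and $\hat N$ and are uniform over $a\in\hat E_b$ and $x\in\T$, which is what \eqref{hyperbolicity} demands. (Alternatively one checks that $\hat E_b$ avoids $T_b\cup TN_b$ and quotes Ma\~n\'e's theorem, but that gives no uniform $M_j$.)
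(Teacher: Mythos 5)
Your proposal is correct and follows the same overall strategy as the paper's proof --- stop the induction at the finite stage $\hat N$, so $\hat E_b$ is a finite union of intervals, then obtain uniform expansion of an arbitrary orbit by decomposing time into free and bound pieces, relying on $f_{a,b}'\ge 2-2b>0$ to bound the loss at any return from below. The one genuine technical difference is how you handle a return into $\{|x-c|<e^{-\sqrt{\hat N}}\}$. The paper introduces $R_0\approx\tfrac12\sqrt{\hat N}$, the level at which the quadratic term of $f_{a,b}'$ matches $d=2-2b$, and coarsens the partition by treating $\hat I_{R_0}$ as a single bottom annulus whose bound period is declared to be that of $I_{R_0}$; consequently \emph{every} bound period is uniformly $\lesssim 8R_0^{3/2}\sim\hat N^{3/4}$, $\T$ decomposes into finitely many pieces $\{Y_j\}$ with fixed expansion windows $n_j$, and the proof finishes by the telescoping step $(f^N)'(x)=(f^{n(x)})'(x)/(f^{n(x)-N})'(f^N x)\ge e^{\kappa_{10}N}d_1^{n_{\max}}$. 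You instead let a deep return's bound period run, truncate it at $\hat N$, and beat the loss $f_{a,b}'(f^t_{a,b}(x))\ge d\sim e^{-2\sqrt{\hat N}}$ directly against the critical-orbit derivative gain over $\hat N$ steps via the bound-distortion lemma; this leads to a global Collet--Eckmann-type estimate for all $x$, from which $M_j$ is extracted. Both versions work, and the calibration $d\sim e^{-2\sqrt{\hat N}}$ is exactly what both need. One small caveat on your wording: on $\hat E_b$ the Induction Statement supplies only the stretched-exponential bound $(f^{k}_{a,b})'(f_{a,b}(c))\ge C_3 e^{k^{2/3}}$, not the full $Ce^{\hat\kappa k}$ of Proposition~\ref{MainThmfixedbCE}; this is harmless because $\hat N^{2/3}\gg 2\sqrt{\hat N}$, but the resulting expansion rate $\kappa_b$ (and hence $M_j$) then depends on $b$ --- which Proposition~\ref{hyper1} permits --- rather than on $a_0$ alone.
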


\begin{proof}
The proof is really the same as the proof of Proposition \ref{MainThmfixedbCE} initially.


As before, we carry out the construction only until time $\hat{N}$. Here
$\hat{N}$ is the the smallest integer $\hat{N}$ satisfying
$$
e^{-\sqrt{ \hat{N}}}\leq d.
$$
At time $\hat{N}$ we have a partition ${\mathcal P}_{\hat{N}}$ consisting of
finitely many intervals $\{\omega_j\}_{j=1}^{M_{\hat{N}}}$.

We now aim to prove that the hyperbolicity statement \eqref{hyperbolicity} is true.

We first recall the two outside expansion statements of
Lemma~\ref{mane}. Suppose that $(a,b)\in{\mathcal N}$ and chose
$I=I^*=(c-\delta,c+\delta)$ in that lemma. Then the
following holds.
\begin{itemize}
\item[1)] If $x,\ f_{a,b}x,\dots, f_{a,b}^{n-1}x\not\in I^*$ and $f_{a,b}^n x\in I^*$,
$$
(f_{a,b})'(x)\geq \Cr{c:celb}e^{\Cr{kappa:outside} n}.
$$
\item[2)] There is an integer $M$ so that if $x,
  f_{a,b}x,\dots,f_{a,b}^{M-1}x\not\in I^*$ then
$$
(f_{a,b}^M)'(x)\geq e^{\Cr{kappa:outside} M}.
$$
\end{itemize}
\end{proof}

Let us define $R_0$ as the smallest integer $R_0$ satisfying $e^{-2R_0}\leq e^{-\sqrt {\hat{N}}}$,
i.e. $R_0$ corresponds to the $r$ where the square term in the
expression for the derivative is of the same size as the constant term $d=2-2b$. The
bound period $p(x),\ x\in(c-e^{-R_0},c+e^{-R_0})$ is chosen to be the
infimum of the bound period for $y\in I_{\pm R_0}$.

We also know by \eqref{ih2} and Lemma \ref{bounddist} that
$$
(f_{a,b}^p)'(x)\geq \frac{1}{\Cr{c:bddist}}e^{ p^{2/3}},\qquad x\in I_{\pm r},
$$
and it holds as well that
$$
(f_{a,b}^p)'(x)\geq \frac{1}{\Cr{c:bddist}} e^{ p^{2/3}},\qquad x\in (c-e^{-R_0},c+e^{-R_0}).
$$
Introducing $\Cl[Kappa-const]{kappa:trunc}$ as
$$
\Cr{kappa:trunc}=\frac{1}{2}\min_{r_\delta\leq |r|\leq R_0}\min_{x\in I_r} \frac{1}{p(x)^{1/3}},
$$
we can in all cases write these estimates as
\begin{equation}\label{eq:lowerbdp}
(f_{a,b}^p)'(x)\geq e^{\Cr{kappa:trunc} p}.
\end{equation}

The factor $\frac12$  is here  used to absorb the constant $\Cr{c:bddist}$. 

Let us in the following use the notation $\hat{I}_{R_0}$ for
the union of $(c-e^{-R_0-1},c+e^{-R_0-1})$ and the previously defined
$I_{-R_0}$ and $I_{R_0}$. The idea is that the derivative recovery has
the same estimate for these three (original) intervals since
$(f_{a,b})'(x)\sim d=2-2b$ in $\hat{I}_{R_0}$ and the bound period is
defined in terms of ${I}_{R_0}$.

Divide the set ${\mathbb T}\setminus I^*$ into several pieces.

We first consider the set
$$
X_M=\{x:x,f_{a,b}x,\dots,f_{a,b}^Mx\not\in I^*\}.
$$
For $x\in X_M$, hyperbolicity is valid by Lemma~\ref{mane}, (ii):
$$
(f_{a,b}^M)'(x)\geq e^{\Cr{kappa:outside} M}.
$$

We also introduce the sets
$$
X_k=\{x:x,\dots,f_{a,b}^{k-1}x\not\in I^* \text{ but } f_{a,b}^kx\in I^*\}, \qquad
1\leq k \leq M-1.
$$

Pick a $k\geq 1$. Now write  the set
$$
X_k=\bigcup_{r_\delta\leq |r|\leq R_0}X_{k,r},
$$
where $X_{k,r}=\{x\in X_k: f_{a,b}^kx\in I_r\}$, $|r_\delta|\leq |r|<R_0$
and
$$
X_{k,\pm R_0}=\{x\in X_k:f^k_{a,b}x\in (c-e^{-R_0},c+e^{-R_0}\}.
$$

We then know that for $x\in X_{k,r}$
$$
(f_{a,b}^{k+p})'(x)\geq \Cr{c:celb} e^{\Cr{kappa:outside}k}e^{\Cr{kappa:trunc} p}\geq e^{\Cl[Kappa-const]{kappa:8}(k+p)},
$$
where $\Cr{kappa:8}=\min(\Cr{kappa:outside},\Cr{kappa:trunc}/2)$.

Here we have used the fact that also for the minimal possible $p$ the factor  $e^{\frac{\Cr{kappa:trunc}}{2}p}$ always
compensates the constant $\Cr{c:celb}$ of Lemma \ref{mane}, and this constant is
independent of $\delta$.

Hence we know that the entire set ${\mathbb T}$ can be written as a
disjoint union of sets $\{Y_j\}_{j=1}^J$ so that for some
$\Cl[Kappa-const]{kappa:10}$ and all $x\in Y_j$
$$
(f_{a,b}^{n_j})'(x)\geq e^{\Cr{kappa:10} n_j}.
$$
We  start with an $x\in Y_{j_0}$. After $n_{j_0}$ steps we will
end up in $Y_{j_1}$ and after another $n_{j_1}$ steps we will end up
in $Y_{j_2}$ etc. The total time will be
$n_{j_0}+n_{j_1}+n_{j_2}+\dots +n_{j_s}$ and
$$
(f_{a,b}^{n_{j_s}+\dots +n_{j_0}})'(x)\geq e^{\Cr{kappa:10}(n_{j_s}+\dots +n_{j_0})},
$$
where
$$
n_{j_s}+\dots+n_{j_0}=\sum_{i=1}^{m} k_in_i.
$$
Let $n_{\text{max}}=\max_{1\leq j \leq J}n_j$ and pick an integer $N$
very large so that
\begin{equation}\label{compensation}
e^{\Cr{kappa:10} N}\cdot d_1^{n_{\text{max}}}\geq e^{\Cl[Kappa-const]{kappa:11} N}.
\end{equation}
Here $d_1=1/B$, where $B=4\geq \max_{x\in{\mathbb T}}|f_{a,b}'(x)|$.

For each point $x$ there is an $n=n(x)=n_{j_0}+n_{j_1}+n_{j_2}+\dots
+n_{j_s}$ so that
$$
N\leq n \leq N+n_{\text{max}}.
$$
We claim that 
$$
(f_{a,b}^N)'(x)\geq e^{\Cr{kappa:11} N}.
$$

This follows since 
$$
(f_{a,b}^N)'(x)= (f_{a,b}^n)'(x)/(f_{a,b}^{n-N})'(f_{a,b}^{N}(x))\geq
e^{\Cr{kappa:10} N}d_1^{n_\text{max}}\geq e^{\Cr{kappa:11} N},
$$
for a suitably $\Cr{kappa:11}$. We conclude that the statement of Proposition \ref{hyper1} holds. \qedwhite

\medskip
We now have all ingredients for the proof of Theorem A.

\medskip
Let $\omega_0$ be an interval as defined in Proposition \ref{MainThmfixedbCE} 
satisfying \eqref{eq:inclusion} and let $\tilde{E}_b$ be the set
defined in this proposition. Let $\hat{E}_b=\hat{E}_b^{\hat{N}}\supset
\tilde{E}_b$ be the set corresponding to the $\hat{N}$:th order construction
of Proposition \ref{hyper1}. $\hat{N}$ is here determined as the
smallest integer satisfying $e^{-\hat{N}}\leq d$ as in the proof of
Proposition \ref{hyper1}. By \eqref{hyperbolicity} it then follows
that the conclusion of Theorem A holds.

\section{Proof of Theorem B}\label{pfthmc}

In this section we are going to prove the last result of the paper.
The methods of its proof will be completely different than the ones
used in the rest of the paper. We will use the term ``countable'' in
the sense ``at most countable.'' For the definitions, see
Introduction.

\begin{proof}[Proof of Theorem~B]
Fix $b<1$. Each tongue is open, so the set $T_b$ is open. Therefore
it is the union of countably many components, each of them an open
interval. Since the points on the boundary of a tongue belong to $TN$,
and the sets $T$ and $TN$ are disjoint, each component is contained in
one tongue.

We claim that the intersection of the closures of two distinct
components $A_1$ and $A_2$ is empty. Suppose it is not and that $a$
belongs to this intersection. Then $(a,b)\in TN$, so it has its type.
This type must be the same as the type of each of the tongues
containing $A_1$ and $A_2$, so those types are the same, that is,
$A_1$ and $A_2$ are contained in the same tongue. If $n$ is the period
of the neutral periodic orbit of $f_{a,b}$, the map $f_{a,b}^n$ has an
interval on which it looks like one of the Cases 1, 2 or 4 of
Lemma~4.1 of~\cite{MR07}. By Theorem~4.1 and Lemma~2.6 of~\cite{MR08},
this cannot be Case~4 (a neutral periodic point repelling from both
sides), and by Lemma~4.2 of~\cite{MR07} it cannot be Case~1 or~2 (a
neutral periodic point repelling from one side). This proves our
claim.

If a parameter $a\in TN_b$ does not belong to a boundary of a
component of $T_b$, then by Lemma~4.2 of~\cite{MR07} the neutral
periodic orbit of $f_{a,b}$ is repelling from both sides (Case~4), so
by Theorem~4.1 and Lemma~2.6 of~\cite{MR08} $a$ is isolated in the set
of elements of $T_b\cup TN_b$ which have type of the same period. This
proves that there are only countably many such values of $a$.

By the claim, the complement of $T_b$ is a closed set without isolated
points. The set $TN_b$ is countable. Therefore $E_b$ (which is the
complement of $T_b$ minus $TN_b$) is dense in the complement of $T_b$.

The second part of the statement follows from the first one and the
fact that each component of $T_b$ is contained in one tongue.
\end{proof}

\end{document}